\documentclass[10pt,a4paper,oneside]{article}
\usepackage[utf8]{inputenc}
\usepackage[english]{babel}

\pdfoutput=1

\usepackage{amsmath,amsfonts,amssymb,amsopn,amscd}
\usepackage{xy}
\usepackage{graphicx}

\usepackage{datetime}
\newdateformat{generalisodate}{\THEYEAR.\twodigit\THEMONTH.\twodigit\THEDAY}
\newcommand{\todayiso}{\the\year\twodigit\month\twodigit\day}

\usepackage{fancyhdr}
\pagestyle{fancy}
\lhead[]{Mathieu \& Temmel}
\chead[]{}
\rhead[]{K-Independent Percolation on Trees}

\lfoot[]{}
\cfoot[]{\thepage}
\rfoot[]{}

\usepackage[pdftex,%
	bookmarks=true,%
	bookmarksopen=true,%
	bookmarksopenlevel=5,%
	colorlinks=true,%
	pdftitle=Arxiv\ \todayiso:\ K-Independent\ Percolation\ On\ Trees,%
	pdfauthor=Temmel,%
]{hyperref}
\input{COMMON/latex/macro_general}
\newcommand{\Then}{\,\Rightarrow\,}

\newcommand{\Iff}{\,\Leftrightarrow\,}
\newcommand{\ForAll}{\forall\,}
\newcommand{\Exists}{\exists\,}

\newcommand{\FirstAlign}{\quad\,\,}

\newcommand{\NatNum}{\mathbb{N}}
\newcommand{\NatNumZero}{\NatNum_0} 
\newcommand{\IntNum}{\mathbb{Z}}

\newcommand{\RealNum}{\mathbb{R}}
\newcommand{\RealNumPlus}{\RealNum_{+}}

\newcommand{\Set}[1]{{\{#1\}}}

\newcommand{\IntegerSet}[1]{{[#1]}}

\newcommand{\Cardinality}[1]{{|#1|}}



\newcommand{\Modulus}[1]{|#1|}
\newcommand{\Indicator}[1]{\,\mathbb{I}_{#1}}

\newcommand{\BigCeiling}[1]{{\left\lceil#1\right\rceil}}
\newcommand{\FracCeiling}[2]{{\BigCeiling{\frac{#1}{#2}}}}


\newcommand{\Graph}[1]{G(#1)}

\newcommand{\Vertices}[1]{V(#1)}
\newcommand{\Neighbours}[1]{\mathcal{N}(#1)}

\newcommand{\GraphMetricSymbol}{d}
\newcommand{\GraphMetric}[2]{\GraphMetricSymbol(#1,#2)}

\newcommand{\ConnectedTo}{{\,\leftrightarrow\,}}

\newcommand{\CutsetsOf}[1]{{\Pi(#1)}}

\newcommand{\AlmostSureLy}{\text{a.s. \hspace{-0.4ex}}}
\newcommand{\Iid}{\text{i.i.d. \hspace{-0.4ex}}}

\newcommand{\Proba}{\mathbb{P}}
\newcommand{\Expect}{\mathbb{E}}

\newcommand{\MeasureSpaceSymbol}{\mathcal{M}}

\newcommand{\ProbabilityMeasureSpace}[1]{\MeasureSpaceSymbol_1(#1)}

\newcommand{\Algebra}{{\mathcal{A}}}

\newcommand{\Tree}{\mathbb{T}}
\newcommand{\NodeLevel}[1]{{l(#1)}}
\newcommand{\TreeLevel}[2]{{L(#1,#2)}}
\newcommand{\TreePath}[2]{{P(#1,#2)}}
\newcommand{\SubtreeRootedAt}[2]{{{#1}^{#2}}}

\newcommand{\Parent}[1]{{\mathfrak{p}(#1)}}

\newcommand{\TreeBoundary}[1]{{\partial #1}}
\newcommand{\Confluent}{{\curlywedge}}

\newcommand{\BranchingNumber}[1]{{br(#1)}}

\newcommand{\RenderName}[1]{#1} 
\newcommand{\MakeName}[2]{
	\newcommand{#1}{\RenderName{#2}}
}
\MakeName{\NameAaronson}{Aaronson}
\MakeName{\NameAaronsonEtAl}{\NameAaronson{} et al.}
\MakeName{\NameAaronsonGilatKeaneDeValk}{\NameAaronson{}, Gilat, Keane \& de Valk}
\MakeName{\NameAntalPisztora}{Antal \& Pisztora}
\MakeName{\NameBernoulli}{Bernoulli}
\MakeName{\NameBalister}{Balister}
\MakeName{\NameBollobas}{Bollob\'{a}s}
\MakeName{\NameBalisterBollobas}{\NameBalister{} \& \NameBollobas{}}
\MakeName{\NameBoltzmann}{Boltzmann}
\MakeName{\NameCayley}{Cayley}
\MakeName{\NameDijkstra}{Dijkstra}
\MakeName{\NameDobrushin}{Dobrushin}
\MakeName{\NameDobrushinLangfordRuelle}{\NameDobrushin{},Langford \& Ruelle}
\MakeName{\NameDurrett}{Durrett}
\MakeName{\NameErdos}{Erd\"{o}s}
\MakeName{\NameErdosLovasz}{\NameErdos{} \& \NameLovasz{}}
\MakeName{\NameFaris}{Faris}
\MakeName{\NameFernandez}{Fern\'{a}ndez}
\MakeName{\NameFernandezProcacci}{\NameFernandez{} \& \NameProcacci{}}
\MakeName{\NameFolner}{F\o{}lner}
\MakeName{\NameGaltonWatson}{Galton-Watson}
\MakeName{\NameGibbs}{Gibbs}
\MakeName{\NameGuttman}{Guttman}
\MakeName{\NameHausdorff}{Hausdorff}
\MakeName{\NameHessenberg}{Hessenberg}
\MakeName{\NameIverson}{Iverson}
\MakeName{\NameKolmogorov}{Kolmogorov}
\MakeName{\NameKotecky}{Koteck\'{y}}
\MakeName{\NameKoteckyPreiss}{\NameKotecky{} \& \NamePreiss{}}
\MakeName{\NameLebesgue}{Lebesgue}
\MakeName{\NameLiggett}{Liggett}
\MakeName{\NameLiggettEtAl}{\NameLiggett{} et al.}
\MakeName{\NameLiggettSchonmannStacey}{\NameLiggett{}, \NameSchonmann{} \& \NameStacey{}}
\MakeName{\NameLovaszLocalLemma}{\NameLovasz{} Local Lemma}
\MakeName{\NameLovasz}{Lov\'{a}sz}
\MakeName{\NameLyons}{Lyons}
\MakeName{\NameMathieu}{Mathieu}
\MakeName{\NameMayer}{Mayer}
\MakeName{\NameMiracleSole}{Miracle-Sol\'{e}}
\MakeName{\NamePenrose}{Penrose}
\MakeName{\NamePeres}{Peres}
\MakeName{\NamePreiss}{Preiss}
\MakeName{\NameProcacci}{Procacci}
\MakeName{\NameRamsey}{Ramsey}
\MakeName{\NameSchonmann}{Schonmann}
\MakeName{\NameScott}{Scott}
\MakeName{\NameScottSokal}{\NameScott{} \& \NameSokal{}}
\MakeName{\NameShearer}{Shearer}
\MakeName{\NameShearersMeasure}{\NameShearer{}'s measure}
\MakeName{\NameSokal}{Sokal}
\MakeName{\NameStacey}{Stacey}
\MakeName{\NameTarski}{Tarski}
\MakeName{\NameTemmel}{Temmel}
\MakeName{\NameTodo}{Todo}
\MakeName{\NameUrsell}{Ursell}
\MakeName{\NameVanHove}{van Hove}
\MakeName{\NameWoess}{Woess}
\MakeName{\NameYoung}{Young}
\MakeName{\NameZorn}{Zorn}

\usepackage{comment}
\specialcomment{NoteToSelf}
	{\begin{paragraph}{Note to self:}}
	{{\hfill$\Diamond$}\end{paragraph}\vspace{1em}}

\excludecomment{NoteToSelf}
\usepackage{amsthm}

\theoremstyle{plain}
\newtheorem{Thm}{Theorem}
\newtheorem{Lem}[Thm]{Lemma}
\newtheorem{Prop}[Thm]{Proposition}
\newtheorem{Cor}[Thm]{Corollary}

\theoremstyle{definition}

\newtheorem{Not}[Thm]{Notation}

\theoremstyle{definition}

\newtheorem{Mod}[Thm]{Model}

\theoremstyle{remark}

\newtheorem*{Rem}{Remark}

\theoremstyle{definition}

\theoremstyle{plain}

\theoremstyle{plain} 
\xyoption{arrow}
\xyoption{frame}
\xyoption{matrix}
\newcommand{\MxyZeroOneSwitch}[1]{
\xymatrix@C=#1 {
	Z & &{\dotso} &{\dotso} &{\dotso} &1 &1 &1 &1 &0 &1 &1 &1 &0 &{\dotso}\\
	X & &{\dotso} &0        &1        &0 &1 &1 &1 &0 &0 &1 &1 &0 &{\dotso}
	\ar "1,10"; "2,10"
	\ar "1,10"; "2,9"
	\ar "1,10"; "2,8"
	\ar@{-->} "2,8"; "1,8"
	\ar@{-->} "2,9"; "1,9"
	\ar@{-->} "2,10"; "1,11"
	\ar@{-->} "2,10"; "1,12"
	\save "1,8"."1,9" *\frm{--} \restore
	\save "1,11"."1,12" *\frm{--} \restore
	\save "2,8"."2,10" *\frm{-} \restore
	\save "1,10" *++\frm{o} \restore
}}
\newcommand{\MxyFuzzTwoIntNumShearer}[1]{
\xymatrix@C=#1 {
	Z & &{.} &{.} &{.} &0 &1 &1 &1 &1 &1 &0 &1 &1 &1 &0 &{.}\\
	X & &{.} &1   &1   &0 &1 &0 &1 &1 &1 &0 &0 &1 &1 &0 &{.}
	\ar "1,12"; "2,12"
	\ar "1,12"; "2,11"
	\ar "1,12"; "2,10"
	\ar@{-->} "2,10"; "1,10"
	\ar@{-->} "2,11"; "1,11"
	\ar@{-->} "2,12"; "1,13"
	\ar@{-->} "2,12"; "1,14"
	\save "1,10"."1,11" *\frm{--} \restore
	\save "1,13"."1,14" *\frm{--} \restore
	\save "2,10"."2,12" *\frm{-} \restore
	\save "1,12" *++\frm{o} \restore
	\ar "1,8"; "2,8"
	\ar "1,8"; "2,7"
	\ar "1,8"; "2,6"
	\save "2,6"."2,8" *\frm{-} \restore
	\save "1,8" *++\frm{o} \restore
}}

\xyoption{graph}
\newcommand{\MxyCanonicalModelSupportBB}{
\xygraph{
	&&{\circ}="n10"\\
	&{\Parent{v}}="p" &&{\circ}="n20"\\
	&{v}="v" &{\circ}="n30" &{\circ}="n31"\\
	{\circ}="n40" &{\circ}="n41" &{\circ}="n42" &{\circ}="n43"
	"n10" ( -@{-} "p" , -@{-} "n20")
	"p" ( -@{=} "v" ^{e} , -@{-} "n30")
	"n20" -@{-} "n31"
	"v" ( -@{-} "n40" , -@{-} "n41")
	"n30" -@{-} "n42"
	"n31" -@{-} "n43"
}}
\xyoption{graph}
\newcommand{\MxyPercolationKernelNormal}{
\xygraph{
	&o\\
	&*+{u=v\Confluent w} ="u"\\
	*+{v} ="v"&&t\\
	&&*+{w} ="w"
	"o" -@{-} "u"
	"u" -@{-} "v"
	"u" -@{-} "t" ^{\GraphMetric{v}{t}= (k\lor s)+1}
	"t" -@{-} "w" ^{\GraphMetric{t}{w}\ge 1}
}}
\newcommand{\MxyPercolationKernelDegenerated}{
\xygraph{
	&o\\
	&*+{u=v\Confluent w} ="u"\\
	*+{v} ="v" &&*+{t=w} ="t"
	"o" -@{-} "u"
	"u" -@{-} "v"
	"u" -@{-} "t" ^{\GraphMetric{v}{t}\le (k\lor s)+1}
}}


\newcommand{\BranchingNumberT}{{\BranchingNumber{\Tree}}}
\newcommand{\BranchingNumberTReciprocal}{{\frac{1}{\BranchingNumberT}}}

\newcommand{\Energy}[1]{{\mathcal{E}(#1)}}
\newcommand{\ExplicitHK}{{h_k}}
\newcommand{\ExplicitGK}{{g_k}}
\newcommand{\ExplicitFK}{{f_k}}

\newcommand{\FuzzKL}[1]{{[#1]_{(k)}}}
\newcommand{\FuzzKN}{\NatNum_{(k)}}
\newcommand{\FuzzKZ}{\IntNum_{(k)}}
\newcommand{\FuzzZeroZ}{\IntNum_{(0)}}

\newcommand{\PercolationClass}[3]{{\mathcal{C}_{#1}^{#2}(#3)}}
\newcommand{\PercolationClassRooted}[5]{{\mathcal{C}_{#1,#2}^{#3,#4}(#5)}}
\newcommand{\Percolation}{{\mathcal{P}}}
\newcommand{\PercolationCutNK}{{\Percolation^{cut(k,N)}}}
\newcommand{\PercolationMinimalK}[1]{{\Percolation_{#1}^{min(k)}}}
\newcommand{\PercolationCanonicalK}[1]{{\Percolation_{#1}^{can(k)}}}
\newcommand{\PercolationKernel}{{\kappa}}

\newcommand{\PowerFracDualK}{{\frac{k^k}{(k+1)^{(k+1)}}}}
\newcommand{\ShearerCriticalFunction}[1]{{\Xi_{#1}}}
\newcommand{\ShearerMeasure}[2]{{\mu_{#1,#2}}}

\newcommand{\TheXi}{\xi}


\newcommand{\PMax}[2]{{p_{max}^{#1}(#2)}}
\newcommand{\PMaxE}[1]{\PMax{#1}{E}}
\newcommand{\PMaxV}[1]{\PMax{#1}{V}}
\newcommand{\PMin}[2]{{p_{min}^{#1}(#2)}}
\newcommand{\PMinE}[1]{\PMin{#1}{E}}
\newcommand{\PMinV}[1]{\PMin{#1}{V}}

\newcommand{\PShearer}[1]{{p_{sh}^{#1}}}
\newcommand{\PShearerKL}[1]{{\PShearer{\FuzzKL{#1}}}}
\newcommand{\PShearerKN}{{\PShearer{\FuzzKN}}}
\newcommand{\PShearerKZ}{{\PShearer{\FuzzKZ}}}
\newcommand{\ResourceCriticalValuesImage}{COMMON/specific/treepercolation/criticalvalues}
\title{K-Independent Percolation on Trees}
\author{
	Pierre Mathieu
		\footnote{L.A.T.P., UMR-CNRS 6632, C.M.I., Université de Provence
39 rue Joliot Curie, 13453 Marseille cedex 13, France }
		\footnote{Email: Pierre.Mathieu@cmi.univ-mrs.fr}
	\hspace{1cm}
	Christoph Temmel
		\footnote{5030 Institut für Mathematische Strukturtheorie, Technische Universität Graz, Steyrergasse 30/III, 8010 Graz, Austria}
		\footnote{Email: temmel@math.tugraz.at}
}
\date{}
\bibliographystyle{abbrv}
\begin{document}

\generalisodate{}
\maketitle{}

\begin{abstract}
Consider the class of $k$-independent bond or site percolations with parameter $p$ on a tree $\Tree$. We derive tight bounds on $p$ for both almost sure percolation and almost sure nonpercolation. The bounds are continuous functions of $k$ and the branching number of $\Tree$. This extends previous results by \NameLyons{} for the independent case ($k=0$) and by \NameBalisterBollobas{} for $1$-independent bond percolations. Central to our argumentation are moment method bounds à la \NameLyons{} supplemented by explicit percolation models à la \NameBalisterBollobas{}. An indispensable tool is the minimality and explicit construction of \NameShearersMeasure{} on the $k$-fuzz of $\IntNum$.
\end{abstract}





\newcommand{\Keywords}{
\begin{paragraph}{Keywords:}
  k-independent,
  k-dependent,
  tree percolation,
  critical value,
  percolation kernel,
  second moment method,
  \NameShearersMeasure{}.
\end{paragraph}
}

\newcommand{\Classification}{
\begin{paragraph}{MSC 2010:}
  82B43 (primary), 
  60K35, 
  82B20, 
  05C05. 
\end{paragraph}
}

\Keywords{}
\Classification{}

$\phantom{}$\vspace{1em}\hrule\vspace{1em}
This is an extended version of \cite{MathieuTemmel_kindependent} by the second author.
\vspace{1em}\hrule\vspace{1em}

\tableofcontents{}
\listoffigures{}

\section{Introduction}\label{sec:introduction}
If we regard percolation on a tree $\Tree$, then a natural question is which properties of the percolation and the tree determine the percolation behaviour. One is especially interested in bounds which are not particular to a specific model, but are valid for whole classes of models. The class of models we investigate are $k$-independent (also called $k$-dependent in the literature) site (bond) percolations with parameter $p$, i.e. the probability that a single vertex (edge) is open is $p$ and subsets of vertices (edges) are independent if their distance is greater than $k$. We look for bounds on the parameter $p$ which guarantee either \AlmostSureLy{} percolation or \AlmostSureLy{} nonpercolation.\\

\NameLyons{} \cite{Lyons_rw_percolation_tree} first treated this question in the case of independent percolation. He defined the branching number $\BranchingNumberT$ as a measure of the size of $\Tree$. Then he showed that it is the characteristic determining the critical probability for independent percolation (see theorem \ref{thm:valueOfCriticalValuesZero}), that is the parameter threshold at which nonpercolation switches to percolation.\\

A recent work by \NameBalisterBollobas{} \cite{BalisterBollobas_onepercolation} deals with the class of $1$-independent bond percolations (see theorem \ref{thm:valueOfCriticalValuesOneBond}). There are two continuous functions of the branching number which give tight bounds for \AlmostSureLy{} percolation and \AlmostSureLy{} nonpercolation of each model in this class.\\

In section \ref{sec:mainResults} we present our results: tight bounds for \AlmostSureLy{} percolation and \AlmostSureLy{} nonpercolation for every $k$. The bounds are again continuous functions of $\BranchingNumberT$, parametrized by $k$. They are the same for bond and site percolation. A core ingredient is a probability measure introduced by \NameShearer{} \cite{Shearer_problem}, which has certain nice minimizing properties, reviewed in section \ref{sec:shearerDefinitionAndGeneralProperties}. We construct it explicitly on the $k$-fuzz of $\IntNum$ in section \ref{sec:shearerKFuzzZ} and show that it is a $(k+1)$-factor. \NameShearersMeasure{} minorizes the probability of having an open path of $k$-independent \NameBernoulli{} rvs. This property is already exploited implicitly in the work of \NameBalisterBollobas{}. We make this argument explicit by using moment method and capacity arguments motivated by \NameLyons{}' proof \cite{Lyons_rw_percolation_tree,Lyons_rw_capacity}, supplemented with explicit percolation models inspired by \NameBalisterBollobas{}' work \cite{BalisterBollobas_onepercolation}.
\section{Setup and previous results}
\label{sec:setupAndPreviousResults}
Let $G:=(V,E)$ be a graph. For every subset $H$ of vertices and/or edges of $G$ denote by $\Vertices{H}$ the \emph{vertices induced by} $H$ and by $\Graph{H}$ the \emph{subgraph of $G$ induced by} $H$. We have the \emph{geodesic graph distance} $\GraphMetricSymbol$ on both vertices and edges, extended naturally to sets of them. Define the equivalence relation $v\ConnectedTo w$ describing \emph{connectedness} on $G$. We denote by $\Neighbours{v}$ the \emph{neighbours} of a vertex $v$. The \emph{$k$-fuzz} (or \emph{$k^{th}$ power}) of $G$ is the graph $(V,E')$, where $E'$ consists of all distinct pairs of vertices with distance less than or equal to $k$ in $G$.\\

We primarily work on a \emph{locally finite tree} $\Tree:=(V,E)$. We consider it to be \emph{infinite}, unless explicitly stated otherwise. Between two nodes $v$ and $w$ we have the unique \emph{geodesic path} $\TreePath{v}{w}$. For the following definitions root $\Tree$ at the \emph{root} $o$ and visualize the tree spreading out downwards from the root. Define the \emph{level} $\NodeLevel{v}:=\GraphMetric{o}{v}$ of a node $v$ and let $\TreeLevel{\Tree}{n}:=\Set{v:\NodeLevel{v}=n}$ be the $n^{th}$ level of $\Tree$. \emph{Downpaths} and \emph{-rays} are finite and infinite geodesics, which start at some vertex $v$ and go downwards, thereby avoiding all ancestors of $v$, respectively. Denote the \emph{boundary} of $\Tree$ by $\TreeBoundary{\Tree}$, which is the set of all ends of $\Tree$, identified with the set of all downrays of $\Tree$ starting at $o$. For all nodes $v\in V\setminus\Set{o}$ there is a unique \emph{parent} denoted by $\Parent{v}$. The \emph{confluent} $v\Confluent w$ is the last common ancestor of two distinct nodes $v$ and $w$. Define a \emph{minimal vertex cutset} $\Pi$ to be a finite set of vertices containing no ancestors of itself and delineating a connected component containing $o$. Denote by $\CutsetsOf{o}$ the \emph{set of all minimal vertex cutsets} of $o$. Finally let $\SubtreeRootedAt{\Tree}{v}$ be the \emph{induced subtree of $\Tree$ rooted at $v$}.\\

Furthermore we abbreviate $\Set{1,\dotsc,n}$ by $\IntegerSet{n}$. As a convention we interpret $\IntegerSet{0}:=\emptyset$, $0^0:=1$, empty products as $1$ and empty sums as $0$.\\

\begin{NoteToSelf}
Later definitions:
\begin{itemize}
  \item diameter before theorem \ref{thm:diameterBoundedThreshold}
  \item independent set in \ref{sec:shearerDefinitionAndGeneralProperties}
  \item $k$-fuzz in \ref{sec:shearerKFuzzZ}
  \item factor and stationary in section \ref{sec:shearerKFuzzZ}
\end{itemize}
Recalls:
\begin{itemize}
  \item confluent in \ref{sec:percolationKernelEstimates}
\end{itemize}
\end{NoteToSelf}

Recall that a \emph{bond} and \emph{site} \emph{percolation} on a graph $G:=(V,E)$ is an rv taking values in $\Set{0,1}^E$ and $\Set{0,1}^V$ respectively. A percolation \emph{percolates} iff it induces an infinite percolation cluster (connected component) in $G$ with nonzero probability.\\

We investigate percolations on a tree $\Tree:=(V,E)$ and look for properties of the percolation and the tree influencing the percolation behaviour. For $k\in\NatNumZero$ we consider the class of \emph{$k$-independent site percolations with parameter $p$} on $\Tree$, denoted by $\PercolationClass{p}{k}{V}$. A site percolation $Z:=\Set{Z_v}_{v\in V}$ has parameter $p$ iff the probability that a single site is open equals $p$. For $W\subseteq V$ let $Z_W:=\Set{Z_v}_{v\in W}$. The site percolation $Z$ is \emph{$k$-independent} iff
\begin{equation}\label{eq:kIndependence}
	\ForAll U, W\subset V:\quad
	\GraphMetric{U}{W}>k \Then Z_U\text{ is independent of }Z_W\,,
\end{equation}
that is events on subsets at distance greater than $k$ are independent. Independence is synonymous with $0$-independence. The present paper investigates bounds on the parameter $p$ guaranteeing either \AlmostSureLy{} percolation or \AlmostSureLy{} nonpercolation for the whole class. We define the \emph{critical values}
\begin{subequations}\label{eq:criticalValues}
\begin{align}
	\label{eq:pMaxDef}
	\PMaxV{k} &:= \inf\Set{p\in [0,1]:
		\ForAll\Percolation\in\PercolationClass{p}{k}{V}:
		\Percolation\text{ percolates}
		}\\
	\label{eq:pMinDef}
	\PMinV{k} &:= \inf\Set{p\in [0,1]:
		\Exists\Percolation\in\PercolationClass{p}{k}{V}:
		\Percolation\text{ percolates}
		}\,.
\end{align}
\end{subequations}

Analogously we define the class $\PercolationClass{p}{k}{E}$ of $k$-independent bond percolations with parameter $p$ on $\Tree$ and critical values $\PMaxE{k}$ and $\PMinE{k}$.\\

A \emph{$\lambda$-flow} on $\Tree$ is a function $f: V\mapsto\RealNumPlus$ such that
\begin{equation}\label{eq:lambdaFlow}
	\ForAll v\in V:\quad
	0\le f(v)=\sum_{w:\,\Parent{w}=v} f(w)\le \lambda^{-\NodeLevel{v}}\,.
\end{equation}
\NameLyons{} \cite{Lyons_rw_percolation_tree} introduced the \emph{branching number} $\BranchingNumberT$ as a measure of the size of a tree $\Tree$:
\begin{equation}\label{eq:branchingNumber}
	\begin{aligned}
		\BranchingNumberT
			:=& \sup\Set{\lambda \ge 1: \Exists \text{ nonzero } \lambda\text{-flow on }\Tree}\\
			=& \sup\Set{\lambda \ge 1: \inf_{\Pi\in\CutsetsOf{o}}
				\sum_{v\in\Pi} \lambda^{-\NodeLevel{v}}>0}\,.
	\end{aligned}
\end{equation}
The duality in \eqref{eq:branchingNumber} is due to the max-flow min-cut theorem on infinite graphs \cite{FordFulkerson_flows}. The branching number $\BranchingNumberT$ is independent of the choice of $o$ and equals the \emph{exponential of the Hausdorff dimension of the boundary $\TreeBoundary{\Tree}$} of $\Tree$ \cite[section 1.8]{LyonsPeres_prob_tree_network}. Throughout this paper we assume $\BranchingNumberT$ to be finite.\\

\begin{NoteToSelf}
In case PM ever asks again: the outer $\sup$s in \eqref{eq:branchingNumber} are the same, just the expression inside uses the max-flow, min-cut duality.
\end{NoteToSelf}

The first known result is due to \NameLyons{} \cite{Lyons_rw_percolation_tree}, where he characterized the \emph{critical value of independent percolation} ($k=0$) in terms of $\BranchingNumberT$:

\begin{Thm}[{\cite[theorem 6.2]{Lyons_rw_percolation_tree}}]\label{thm:valueOfCriticalValuesZero}
\begin{equation}\label{eq:valueOfCriticalValuesZero}
	\PMinV{0}=\PMinE{0}=\PMaxV{0}=\PMaxE{0}=\BranchingNumberTReciprocal\,.
\end{equation}
\end{Thm}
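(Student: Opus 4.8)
The plan is to prove Theorem~\ref{thm:valueOfCriticalValuesZero} by combining a first-moment upper bound on $\PMaxV{0}$ and $\PMaxE{0}$ with a second-moment lower bound on $\PMinV{0}$ and $\PMinE{0}$, using the flow/cutset duality in \eqref{eq:branchingNumber}. Since $\PMinV{k}\le\PMaxV{k}$ and $\PMinE{k}\le\PMaxE{k}$ trivially, and since the only $0$-independent percolation with parameter $p$ is the product (independent) percolation, it suffices to show $\PMaxV{0}\le\BranchingNumberTReciprocal$ and $\PMinV{0}\ge\BranchingNumberTReciprocal$, and likewise for bond percolation; but note that for $k=0$ bond and site percolation on a tree are essentially interchangeable (identify an edge with its lower endpoint, so an open path of sites corresponds to an open path of edges up to a harmless shift), so I would prove the site statement carefully and remark that the bond case follows by the same argument. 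First I would fix the root $o$ and observe that the cluster of $o$ is infinite iff there is an infinite open downray from $o$.

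For the upper bound $\PMaxV{0}\le\BranchingNumberTReciprocal$: fix $p>\BranchingNumberTReciprocal$. I want to show the independent site percolation with parameter $p$ percolates, i.e. survives with positive probability. The standard route is the second-moment method. For a minimal cutset $\Pi\in\CutsetsOf{o}$ and $v\in\Pi$, let $X_v$ be the indicator that the path $\TreePath{o}{v}$ is entirely open, and put $X_\Pi:=\sum_{v\in\Pi}p^{-\NodeLevel{v}}X_v$, so $\Expect[X_\Pi]=\Cardinality{\Pi}\cdot(\text{something})$—more precisely $\Expect[X_\Pi]=\sum_{v\in\Pi}p^{-\NodeLevel{v}}p^{\NodeLevel{v}}=\Cardinality{\Pi}$; wait, I should weight by the flow instead. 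The cleaner choice: pick a nonzero $\lambda$-flow $f$ for some $1<\lambda<1/p^{-1}=\ldots$, actually for $\lambda$ with $\lambda p>1$ and $\lambda\le\BranchingNumberT$, normalize $f(o)=1$, and set $X_\Pi:=\sum_{v\in\Pi}\frac{f(v)}{p^{\NodeLevel{v}}}X_v$. Then $\Expect[X_\Pi]=\sum_{v\in\Pi}f(v)=f(o)=1$ by the flow conservation law. For the second moment one computes $\Expect[X_\Pi^2]=\sum_{v,w\in\Pi}\frac{f(v)f(w)}{p^{\NodeLevel{v}+\NodeLevel{w}}}\Proba[X_v=X_w=1]$, and using independence together with the fact that $\TreePath{o}{v}$ and $\TreePath{o}{w}$ share exactly $\TreePath{o}{v\Confluent w}$, this equals $\sum_{v,w}\frac{f(v)f(w)}{p^{\NodeLevel{v}+\NodeLevel{w}}}p^{\NodeLevel{v}+\NodeLevel{w}-\NodeLevel{v\Confluent w}}=\sum_{v,w}\frac{f(v)f(w)}{p^{\NodeLevel{v\Confluent w}}}$. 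Rewriting this sum by grouping over the confluent and using flow conservation shows it is bounded by a geometric-type series in $\sum_n (\lambda p)^{-n}$-flavoured terms that converges since $\lambda p>1$; hence $\sup_\Pi\Expect[X_\Pi]^2/\Expect[X_\Pi^2]\ge c>0$ uniformly in $\Pi$. By Paley--Zygmund, $\Proba[X_\Pi>0]\ge c$ for every $\Pi$, and since $\Set{X_\Pi>0}$ is (up to a compactness/limit argument over an exhausting sequence of cutsets, e.g. the level sets $\TreeLevel{\Tree}{n}$) decreasing to the event that an infinite open ray exists, we get $\Proba[\text{percolation}]\ge c>0$.

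For the lower bound $\PMinV{0}\ge\BranchingNumberTReciprocal$: fix $p<\BranchingNumberTReciprocal$. By the cutset characterization in \eqref{eq:branchingNumber}, for any $\lambda$ with $p<\lambda^{-1}$ and $\lambda>\BranchingNumberT$ we have $\inf_{\Pi}\sum_{v\in\Pi}\lambda^{-\NodeLevel{v}}=0$; taking $\lambda=1/p$ directly, $\inf_{\Pi\in\CutsetsOf{o}}\sum_{v\in\Pi}p^{\NodeLevel{v}}=0$. The first-moment bound gives, for any cutset $\Pi$, $\Proba[o\text{ connects past }\Pi]\le\Expect[\#\Set{v\in\Pi:\TreePath{o}{v}\text{ open}}]=\sum_{v\in\Pi}p^{\NodeLevel{v}}$, since the path to $v$ has $\NodeLevel{v}$ vertices (or $\NodeLevel{v}$ edges after the shift), each open with probability $p$ independently. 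Taking the infimum over $\Pi$ forces $\Proba[o\leftrightarrow\infty]=0$; summing over the countably many possible roots (vertices) shows there is \AlmostSureLy{} no infinite cluster, i.e. no percolation. The bond case is identical with $\NodeLevel{v}$ counting edges. Combining, $\BranchingNumberTReciprocal\le\PMinV{0}\le\PMaxV{0}\le\BranchingNumberTReciprocal$ and likewise for bonds, which gives \eqref{eq:valueOfCriticalValuesZero}.

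The main obstacle I anticipate is the second-moment computation in the upper bound: one has to bound $\Expect[X_\Pi^2]=\sum_{v,w\in\Pi}f(v)f(w)p^{-\NodeLevel{v\Confluent w}}$ by a constant independent of $\Pi$. The trick is to reorganize the double sum by first summing over the confluent $u=v\Confluent w$: the contribution of pairs with $v\Confluent w=u$ is controlled by $p^{-\NodeLevel{u}}$ times (sum of $f$ over descendants of $u$ in $\Pi$ through one child)$^2$-type quantities, and flow conservation says $\sum_{v\in\Pi,\,v\succeq u}f(v)\le f(u)\le\lambda^{-\NodeLevel{u}}$, so the confluent-$u$ block is at most $f(u)\lambda^{-\NodeLevel{u}}p^{-\NodeLevel{u}}=f(u)(\lambda p)^{-\NodeLevel{u}}$ up to constants; summing over all $u$ on a ray and then over rays gives $\sum_u f(u)(\lambda p)^{-\NodeLevel{u}}\le\sum_{n\ge0}(\lambda p)^{-n}\sum_{u\in\TreeLevel{\Tree}{n}\cap(\text{flow support})}f(u)$, and $\sum_{u\in\TreeLevel{\Tree}{n}}f(u)=f(o)=1$ for each $n$, so the whole thing is $\le\sum_{n\ge0}(\lambda p)^{-n}=\frac{1}{1-(\lambda p)^{-1}}<\infty$ because $\lambda p>1$. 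Getting the bookkeeping of this reorganization exactly right—especially the factor-of-two / diagonal terms and the "through one child" accounting—is the fiddly part; everything else (Paley--Zygmund, the passage from finite cutsets to the infinite-ray event via an exhaustion and the decreasing-events limit, the bond/site reduction) is routine. This is exactly \NameLyons{}' original argument (\cite{Lyons_rw_percolation_tree}, and the capacity/flow packaging in \cite{Lyons_rw_capacity}), which I would cite while reproducing the key estimate.
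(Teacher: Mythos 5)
Your argument is Lyons' original first/second-moment proof (flow-weighted Paley--Zygmund for the upper bound on $\PMaxV{0}$, cutset first-moment bound for the lower bound on $\PMinV{0}$) and is correct. The paper cites \cite{Lyons_rw_percolation_tree} for this theorem rather than re-proving it, but its own proof of the general theorem \ref{thm:valueOfRootedCriticalValues}---via propositions \ref{prop:secondMomentMethodTreeAdaption}, \ref{prop:secondMomentBound} and \ref{prop:firstMomentBound}---specialized to $k=0$ (so that $\PercolationKernel(v,w)=p^{-\NodeLevel{v\Confluent w}}$ and the constant is $C=1$) is exactly your argument, including the same confluent-block reorganization of the double sum.
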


In the independent case the critical values coincide, since for fixed $p$ there is only one percolation. \NameLyons{}' proof is based on moment methods and capacity estimates of percolation kernels. In general, a percolation is \emph{quasi-independent} \cite[section 2.4]{Lyons_rw_capacity} iff, using the notation from figure \ref{fig:percolationKernel} on page \pageref{fig:percolationKernel} with $u:=v\Confluent w$ the confluent of $v$ and $w$, we have an $M>0$ such that for all $v$ and $w$
\begin{equation}\label{eq:quasiIndependent}
	\Proba(o\ConnectedTo v,o\ConnectedTo w|o\ConnectedTo u)
	\le M \Proba(o\ConnectedTo v|o\ConnectedTo u)
	\Proba(o\ConnectedTo w|o\ConnectedTo u)\,.
\end{equation}
Equivalently this majorizes the \emph{percolation kernel} \eqref{eq:percolationKernel} by
\begin{equation}\label{eq:percolationKernelQuasiIndependentMajoration}
	\PercolationKernel(v,w):=\frac%
		{\Proba(o\ConnectedTo v,o\ConnectedTo w)}
		{\Proba(o\ConnectedTo v)\Proba(o\ConnectedTo s)}
	\le \frac{M}{\Proba(o\ConnectedTo u)}\,.
\end{equation}
This way \NameLyons{} \cite[section 2.4]{Lyons_rw_capacity} used the weighted second moment method to get bounds for the probability of the percolation reaching subsets of $\TreeBoundary{\Tree}$ in terms of their \emph{capacity}, extending the independent case in \cite{Lyons_rw_percolation_tree}.\\

In a recent work, \NameBalisterBollobas{} \cite{BalisterBollobas_onepercolation} deal with the class of $1$-independent bond percolations:

\begin{Thm}[\cite{BalisterBollobas_onepercolation}]\label{thm:valueOfCriticalValuesOneBond}
\begin{subequations}\label{eq:valueOfCriticalValuesOneBond}
\begin{align}
	\label{eq:valueOfPMinOneBond}
	\PMinE{1}&=\frac{1}{\BranchingNumberT^2}\\
	\label{eq:valueOfPMaxOneBond}
	\PMaxE{1}&=\begin{cases}
			1-\frac{\BranchingNumberT-1}{\BranchingNumberT^2}
				&\text{if }\BranchingNumberT\le 2\\
			\frac{3}{4}
				&\text{if }\BranchingNumberT\ge 2.
	    \end{cases}
\end{align}
\end{subequations}
\end{Thm}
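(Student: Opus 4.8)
\emph{Proof sketch.}
The plan is to pin down each of the four critical values separately, matching a lower with an upper bound, using the two complementary techniques announced in the introduction: first/second moment estimates à la \NameLyons{} for the statements asserting (non)percolation of \emph{every} model in the class, and explicit $1$-independent models à la \NameBalisterBollobas{} for those exhibiting \emph{one} model. The link between them is \NameShearersMeasure{} on the $1$-fuzz of $\IntNum$: restricting any $\Percolation\in\PercolationClass{p}{1}{E}$ to a geodesic $\TreePath{o}{v}$ produces a $1$-independent \NameBernoulli{}$(p)$ sequence whose dependency graph is contained in that $1$-fuzz, so by minimality of \NameShearersMeasure{} one has $\Proba(o\ConnectedTo v)\ge q_{\NodeLevel{v}}$, where $q_n$ is the all-open probability of \NameShearersMeasure{} with density $p$ on $n$ sites. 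Identifying $q_n$ with the independence polynomial of the path at $-(1-p)$ gives $q_n=q_{n-1}-(1-p)q_{n-2}$, $q_0=1$, $q_1=p$, so $q_n>0$ for all $n$ exactly when $p\ge\tfrac34$ — in which case $q_n$ behaves like $\rho(p)^n$ with $\rho(p):=\tfrac12(1+\sqrt{4p-3})$ — whereas for $p<\tfrac34$ the $q_n$ eventually turn non-positive, which by \NameShearer{}'s theorem means a $1$-independent \NameBernoulli{}$(p)$ sequence avoiding every run of some fixed length $m(p)$ can be realised.

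For $\PMinE{1}=\frac1{\BranchingNumberT^2}$ the lower bound is a one-line first-moment estimate: along $\TreePath{o}{v}$ the odd-indexed edges are mutually independent (pairwise at distance $\ge2$), so $\Proba(o\ConnectedTo v)\le p^{\lceil\NodeLevel{v}/2\rceil}$; if $p<\frac1{\BranchingNumberT^2}$ pick $\lambda\in(\BranchingNumberT,p^{-1/2})$, whence $\Proba(o\ConnectedTo v)\le\lambda^{-\NodeLevel{v}}$ and $\Proba(o\ConnectedTo\TreeBoundary{\Tree})\le\inf_{\Pi\in\CutsetsOf{o}}\sum_{v\in\Pi}\lambda^{-\NodeLevel{v}}=0$ by the cutset form of \eqref{eq:branchingNumber} (and re-rooting excludes an infinite cluster anywhere). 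For the upper bound, given $p>\frac1{\BranchingNumberT^2}$ take i.i.d. \NameBernoulli{}$(\sqrt p)$ variables $\Set{Y_v}_{v\in V}$ and declare the edge from $v$ to its child $w$ open iff $Y_v=Y_w=1$: this lies in $\PercolationClass{p}{1}{E}$, and its open cluster of $o$ coincides with the cluster of $o$ in \NameBernoulli{}$(\sqrt p)$ site percolation, which percolates since $\sqrt p>\BranchingNumberTReciprocal$ by the site version of Theorem~\ref{thm:valueOfCriticalValuesZero}.

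For $\PMaxE{1}$ the upper bound feeds the \NameShearer{} minorization into \NameLyons{}' machinery. Any $\Percolation\in\PercolationClass{p}{1}{E}$ is quasi-independent, \eqref{eq:quasiIndependent}, with a constant $M=M(p)$ — past a confluent $u=v\Confluent w$ the two branches decouple outside the bounded neighbourhood of $u$ — so by \eqref{eq:percolationKernelQuasiIndependentMajoration}, for $p\ge\tfrac34$ the percolation kernel obeys $\PercolationKernel(v,w)\le M/\Proba(o\ConnectedTo u)\le M'\rho(p)^{-\NodeLevel{v\Confluent w}}$; \NameLyons{}' weighted second moment / capacity bound then makes $\Proba(o\ConnectedTo\TreeBoundary{\Tree})$ positive once some probability measure on $\TreeBoundary{\Tree}$ has finite $\PercolationKernel$-energy $\Energy{\mu}$, i.e. once $\rho(p)\BranchingNumberT>1$. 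As $\rho$ is only defined for $p\ge\tfrac34$, solving $\rho(p)\BranchingNumberT=1$ gives $p=1-\frac{\BranchingNumberT-1}{\BranchingNumberT^2}$ for $\BranchingNumberT\le2$ and no admissible root — so the threshold collapses to $\tfrac34$ — for $\BranchingNumberT\ge2$, which is \eqref{eq:valueOfPMaxOneBond}. For the matching lower bound, given $p$ below that value one exhibits a non-percolating $\Percolation\in\PercolationClass{p}{1}{E}$: if $p<\tfrac34$, the lift to $\Tree$ of the sub-critical ("collapsed") \NameShearer{} sequence on $\IntNum$, which carries no open path of length $m(p)$ at all; if $\tfrac34\le p<1-\frac{\BranchingNumberT-1}{\BranchingNumberT^2}$ (a window that is non-empty only for $\BranchingNumberT<2$), the lift of \NameShearersMeasure{} itself, obtained by setting $Z_{\Set{v,w}}:=\phi(U_v,U_w)$ for the $2$-block factor $\phi$ representing it and i.i.d. $\Set{U_v}_{v\in V}$, so that $\Proba(o\ConnectedTo v)=q_{\NodeLevel{v}}$ decays like $\rho(p)^{\NodeLevel{v}}$ with $\rho(p)<\BranchingNumberTReciprocal$ and the cutset estimate of the previous paragraph again yields $\Proba(o\ConnectedTo\TreeBoundary{\Tree})=0$.

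The crux is the \NameShearer{}-theoretic input: that \NameShearersMeasure{} minorizes every $1$-independent \NameBernoulli{}$(p)$ law with the dependency structure of the $1$-fuzz of $\IntNum$; that its critical density is $\tfrac34$ with exponential rate $\rho(p)=\tfrac12(1+\sqrt{4p-3})$; and — the delicate point that lets one transport the models from $\IntNum$ to $\Tree$ — that \NameShearersMeasure{} and its sub-critical companion are explicit finite-range factors, together with the uniform quasi-independence estimate without which \NameLyons{}' capacity argument cannot even be started. Granting these, the rest is bookkeeping; the $k=1$ case is their first payoff, and the general-$k$ statement of Section~\ref{sec:mainResults} follows the same scheme with $\tfrac34$ and $\rho$ replaced by their $k$-fuzz analogues.
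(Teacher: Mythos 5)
Your sketch is correct and it reproduces, specialized to $k=1$, exactly the strategy the paper uses for the general Theorems~\ref{thm:valueOfCriticalValuesK}/\ref{thm:valueOfRootedCriticalValues}: Shearer minorization along geodesics, the kernel bound plus weighted second moment / $\lambda$-flow capacity argument for the uniform upper bound on $\PMaxE{1}$ and for percolation of the explicit minimal model, and first-moment cutset estimates against the explicit canonical and cutup-type models for the matching lower bounds. One thing worth flagging: the paper treats Theorem~\ref{thm:valueOfCriticalValuesOneBond} as a \emph{cited} result of \NameBalisterBollobas{} and remarks that their original proof is a sequence of short inductive arguments relying on particularities of $k=1$; what you have reconstructed is not that proof but the paper's reinterpretation of it, which is the intended reading. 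Two small technical caveats in your write-up: the recursion $q_n=q_{n-1}-(1-p)\,q_{n-2}$ with $q_0=1,\ q_1=p$ and root $\rho(p)=\tfrac12(1+\sqrt{4p-3})$ is right, but at the boundary $p=\tfrac34$ the quasi-independence constant of Proposition~\ref{prop:ksIndependentImpliesQuasiIndependent} blows up, so the route through \eqref{eq:percolationKernelQuasiIndependentMajoration} strictly only gives $p>\tfrac34$ (which still yields $\PMaxE{1}\le\tfrac34$, but the paper's Proposition~\ref{prop:secondMomentBound} gets percolation at $p=\tfrac34$ itself by bounding the kernel directly as in Proposition~\ref{prop:ksIndependentKernelBound} rather than via quasi-independence); and your two-block lift $Z_{\Set{v,w}}=\phi(U_v,U_w)$ is the \NameBalisterBollobas{} model with $s=1$, whereas the paper's canonical model achieves $s=0$ via tree-fission — either suffices for the first-moment lower bound, so this is only a stylistic divergence.
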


Their proof strategy for $\PMinE{1}$ is based on the first moment method and a simple explicit model. We generalize it rather straightforwardly to higher $k$ in section \ref{sec:determiningPMin}. Their proof for $\PMaxE{1}$ on the other hand combines a so-called canonical model (discussed in section \ref{sec:canonical}) with several short and elementary inductive proofs (see \cite{Temmel_kindependent}). For every $p\ge\frac{3}{4}$ this canonical model minimizes the probability to percolate. They implicitly retrace the weighted second moment method, percolation kernel capacity estimates based on $\lambda$-flows and the minimizing property \eqref{eq:shearerMinimalityOVOEP}, \eqref{eq:shearerKFuzzZEscapingOVOEPMinoration} of \NameShearersMeasure{} \cite{Shearer_problem} on $\IntNum$. Alas this inductive approach exploits a few particularities of the case $k=1$, which we have not been able to abstract from.

\section{Main results}\label{sec:mainResults}
Our principal result in the setting of section \ref{sec:setupAndPreviousResults} (see also figure \ref{fig:principalResults} on page \pageref{fig:principalResults}) is:

\begin{Thm}\label{thm:valueOfCriticalValuesK} $\ForAll k\in\NatNumZero$:
\begin{subequations}\label{eq:valueOfCriticalValuesK}
\begin{align}
	\label{eq:valueOfPMinK}
	\PMinV{k}=\PMinE{k}&=\frac{1}{\BranchingNumberT^{k+1}}\\
	\label{eq:valueOfPMaxK}
	\PMaxV{k}=\PMaxE{k}&=\begin{cases}
			1-\frac{\BranchingNumberT-1}{\BranchingNumberT^{k+1}}
				&\text{if }\BranchingNumberT\le\frac{k+1}{k}\\
			1-\PowerFracDualK
				&\text{if }\BranchingNumberT\ge\frac{k+1}{k}\,,
	    \end{cases}
\end{align}
where we interpret $\frac{1}{0}:=\infty$ in the case $k=0$.
\end{subequations}
\end{Thm}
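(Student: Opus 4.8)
We obtain each of the four critical values by proving a matching lower and upper bound on the admissible $p$, in each step passing from $\BranchingNumberT$ to an auxiliary $\lambda<\BranchingNumberT$ for which, by \eqref{eq:branchingNumber}, a nonzero $\lambda$-flow exists, and letting $\lambda\uparrow\BranchingNumberT$ at the end; bond and site percolations are run in parallel. For $\PMinV{k}=\PMinE{k}=\BranchingNumberT^{-(k+1)}$, the bound ``$\ge$'' is a first-moment estimate along cutsets: on a tree $\Set{o\ConnectedTo v}$ is exactly the event that $\TreePath{o}{v}$ is open, and the sites of $\TreePath{o}{v}$ spaced $k+1$ apart are pairwise at distance $>k$, hence independent, so for any $\Percolation\in\PercolationClass{p}{k}{V}$ and $v$ at level $n$ we get $\Proba(o\ConnectedTo v)\le p^{\Ceiling{(n+1)/(k+1)}}\le(p^{1/(k+1)})^{\NodeLevel{v}}$; summing over $\Pi\in\CutsetsOf{o}$ and using \eqref{eq:branchingNumber}, the bound $\Proba(o\ConnectedTo\TreeBoundary{\Tree})\le\sum_{v\in\Pi}(p^{1/(k+1)})^{\NodeLevel{v}}$ has infimum $0$ as soon as $p^{1/(k+1)}<\BranchingNumberT^{-1}$, and the same inside every subtree rules out percolation. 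For ``$\le$'' one exhibits, for $p>\BranchingNumberT^{-(k+1)}$, an explicit percolating model of parameter $p$ (Section~\ref{sec:determiningPMin}), generalizing the elementary construction of \NameBalisterBollobas{}: the state of an edge (site) is built as a product of independent $\NameBernoulli{}(q)$ variables, $q:=p^{1/(k+1)}>\BranchingNumberT^{-1}$, attached to a window of $k+1$ vertices around it, arranged so that an open geodesic carries an open cluster of the supercritical independent $\NameBernoulli{}(q)$ percolation on $\Tree$, which percolates by Theorem~\ref{thm:valueOfCriticalValuesZero}.

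The substance of the theorem is $\PMaxV{k}=\PMaxE{k}$, and its harder half is that for $p$ above the stated value \emph{every} $k$-independent percolation of parameter $p$ percolates. The tool is \NameLyons{}'s weighted second-moment and capacity argument (Section~\ref{sec:percolationKernelEstimates}), which needs two inputs. First, quasi-independence \eqref{eq:quasiIndependent} of an arbitrary $k$-independent $\Percolation$: split a pair of open connections $o\ConnectedTo v$, $o\ConnectedTo w$ at the confluent $u:=v\Confluent w$ and use that past distance $k$ from $u$ the two descending branches evolve independently, which bounds the percolation kernel \eqref{eq:percolationKernel} as in \eqref{eq:percolationKernelQuasiIndependentMajoration}. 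Second, a sharp geometric lower bound on one-point probabilities: restricting $\Percolation$ to $\TreePath{o}{v}$ yields a $k$-independent $\Set{0,1}$-process of parameter $p$, so by minimality of \NameShearersMeasure{} (\eqref{eq:shearerMinimalityOVOEP}, \eqref{eq:shearerKFuzzZEscapingOVOEPMinoration}) one has $\Proba(o\ConnectedTo v)\ge\ShearerMeasure{\FuzzKZ}{p}(\TreePath{o}{v}\text{ open})$, which is positive precisely for $p>\PShearerKZ=1-\PowerFracDualK$ and, \NameShearersMeasure{} on $\FuzzKZ$ being an explicit $(k+1)$-factor (Section~\ref{sec:shearerKFuzzZ}), decays geometrically at a rate $\gamma(p,k)$ characterised by $1-p=\gamma(p,k)^{k}(1-\gamma(p,k))$ on the branch $\gamma\ge\frac{k}{k+1}$. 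Plugging both into the capacity estimate, the method delivers $\Proba(o\ConnectedTo\TreeBoundary{\Tree})>0$ whenever $\gamma(p,k)\lambda>1$ for some $\lambda<\BranchingNumberT$, i.e.\ whenever $\gamma(p,k)>\BranchingNumberT^{-1}$; since $g(\gamma):=\gamma^{k}(1-\gamma)$ is unimodal on $(0,1)$ with maximum $\PowerFracDualK$ at $\gamma=\frac{k}{k+1}$, this holds for every $p>\PShearerKZ$ when $\BranchingNumberT\ge\frac{k+1}{k}$ and for every $p$ with $1-p<\frac{\BranchingNumberT-1}{\BranchingNumberT^{k+1}}$ when $\BranchingNumberT\le\frac{k+1}{k}$ --- exactly the two regimes of \eqref{eq:valueOfPMaxK}.

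For the complementary half, that for $p$ below the threshold some $k$-independent percolation of parameter $p$ does not percolate, one uses the canonical model of Section~\ref{sec:canonical}: extend \NameShearersMeasure{} on $\FuzzKZ$ to a $k$-independent percolation on $\Tree$ whose restriction to every geodesic $\TreePath{o}{v}$ is \NameShearersMeasure{} on the corresponding segment, so that $\Proba(o\ConnectedTo v)=\ShearerMeasure{\FuzzKZ}{p}(\TreePath{o}{v}\text{ open})\le C\gamma(p,k)^{\NodeLevel{v}}$ (and is eventually $0$ when $p<\PShearerKZ$, so the model trivially fails to reach $\TreeBoundary{\Tree}$). If $\BranchingNumberT\le\frac{k+1}{k}$ and $1-p>\frac{\BranchingNumberT-1}{\BranchingNumberT^{k+1}}=g(\BranchingNumberT^{-1})$, then monotonicity of $g$ on $[\frac{k}{k+1},1)$ forces $\gamma(p,k)<\BranchingNumberT^{-1}$, whence $\sum_{v\in\Pi}\Proba(o\ConnectedTo v)\le C\sum_{v\in\Pi}\gamma(p,k)^{\NodeLevel{v}}$ has infimum $0$ over $\Pi\in\CutsetsOf{o}$ by \eqref{eq:branchingNumber} and the model does not percolate; for $\BranchingNumberT\ge\frac{k+1}{k}$ the same argument works for all $p<\PShearerKZ$. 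As every step is carried out simultaneously for $\PercolationClass{p}{k}{V}$ and $\PercolationClass{p}{k}{E}$, the site and bond values coincide.

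I expect the main obstacles, beyond the facts about \NameShearersMeasure{} imported from the earlier sections, to be: verifying quasi-independence \eqref{eq:quasiIndependent} for a general $k$-independent percolation with constants uniform in $v,w$ (the $O(k)$ sites near the confluent $u$ must be absorbed cleanly into $M$); feeding the \NameShearer{}-type lower bound through \NameLyons{}'s capacity formalism so that the surviving threshold is exactly $\gamma(p,k)>\BranchingNumberT^{-1}$; and the construction and analysis of the canonical model, in particular establishing the identity $1-p=\gamma(p,k)^{k}(1-\gamma(p,k))$ that links the parameter to the decay rate of the $(k+1)$-factor.
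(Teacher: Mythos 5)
Your overall architecture matches the paper's: first-moment bounds and an explicit percolating model for $\PMinV{k}$, and second-moment / capacity bounds plus explicit non-percolating models for $\PMaxV{k}$, both routed through \NameShearersMeasure{} on $\FuzzKZ$. But two steps do not hold up as written.

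First, for the minimal model you claim that an open $\Percolation$-geodesic ``carries an open cluster of the supercritical independent $\NameBernoulli{}(q)$ percolation on $\Tree$,'' and then you invoke Theorem~\ref{thm:valueOfCriticalValuesZero} directly. This coupling does not exist. The minimal model is constructed by tree-fission (section~\ref{sec:treeFission}) from the line process $\mathcal{Z}_n=\prod_{i=0}^k X_{n+i}$, and the fission only prescribes conditional laws along downrays; it does not place independent $\NameBernoulli{}(\hat p)$ variables on the tree vertices. A naive attempt to do so --- e.g.\ $Z_v:=\prod_{u}X_u$ over a window of $k+1$ tree vertices near $v$ --- fails $k$-independence: two vertices $v,w$ in different subtrees with $\GraphMetric{v}{w}>k$ but both within distance $k$ of the confluent $v\Confluent w$ would have overlapping windows, hence correlated states. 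The marginal $p=\hat p^{k+1}$ is also strictly smaller than $\hat p$, so there is no stochastic domination of $\NameBernoulli{}(\hat p)$ in the right direction. The paper instead proves proposition~\ref{prop:minimalModelFacts} by the percolation-kernel bound of proposition~\ref{prop:ksIndependentKernelBound} fed into the second-moment criterion of proposition~\ref{prop:secondMomentMethodTreeAdaption}.

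Second, your lower bound $\PMaxV{k}\ge\PShearerKZ$ for $\BranchingNumberT\ge\frac{k+1}{k}$ rests on ``the same argument'' applied to the canonical model for $p<\PShearerKZ$. But the canonical model is a tree-fission of $\ShearerMeasure{\FuzzKN}{p}$, which is a probability measure only for $p\ge\PShearerKZ$; for $p<\PShearerKZ$ the signed measure is not a measure, so there is no percolation in $\PercolationClass{p}{k}{V}$ to exhibit. Saying the path probability ``is eventually $0$'' does not conjure a model --- it is precisely the statement that the construction breaks down. The paper fills this gap with the cutup model (model~\ref{mod:cutup}): tile $\NatNumZero$ with independent copies of $\ShearerMeasure{\FuzzKL{N}}{\PShearerKL{N}}$ and tree-fission the result; since $\PShearerKL{N}<\PShearerKZ$ and $\PShearerKL{N}\to\PShearerKZ$ by proposition~\ref{prop:pShearerKFuzzZ}, this produces genuinely $k$-independent non-percolating (in fact diameter-bounded) models at parameters arbitrarily close to $\PShearerKZ$ from below. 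Two smaller points: the ``extension of \NameShearersMeasure{} to $\Tree$'' that you use must be understood as the tree-fission of section~\ref{sec:treeFission}, not as \NameShearersMeasure{} on the $k$-fuzz of $\Tree$, which has a different and larger critical value; and routing the upper bound on $\PMaxV{k}$ through quasi-independence \eqref{eq:quasiIndependent} is fine for the infimum in the theorem, but it is worth noting the paper deliberately avoids it, since the quasi-independence constant of proposition~\ref{prop:ksIndependentImpliesQuasiIndependent} degenerates at $p=\PShearerKZ$, whereas the direct kernel bound in proposition~\ref{prop:secondMomentBound} also delivers percolation at $p=\PShearerKZ$ when $\BranchingNumberT>\frac{k+1}{k}$.
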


This theorem is a corollary of the more general theorem \ref{thm:valueOfRootedCriticalValues} upon setting $s=k$ and verifying that (un)rooting a percolation does not change its percolation behaviour (see section \ref{sec:generalTreePercolationTools}).\\

First we narrow down the definition of the percolation classes we work on. Let $\PercolationClassRooted{p}{o}{k}{s}{V}$ be the class of \emph{rooted site percolations with parameter $p$} on $\Tree$ which are \emph{$k$-independent along downrays from $o$} and \emph{$s$-independent elsewhere}, that is among vertices not on the same downray. We define the \emph{rooted critical values} as
\begin{subequations}\label{eq:rootedCriticalValues}
\begin{align}
	\label{eq:rootedpMaxDef}
	\PMaxV{k,s} &:= \inf\Set{p\in [0,1]:
		\ForAll o\in V:
		\ForAll\Percolation\in\PercolationClassRooted{p}{o}{k}{s}{V}:
		\Percolation\text{ percolates}
		}\\
	\label{eq:rootedpMinDef}
	\PMinV{k,s} &:= \inf\Set{p\in [0,1]:
		\Exists o\in V:
		\Exists\Percolation\in\PercolationClassRooted{p}{o}{k}{s}{V}:
		\Percolation\text{ percolates}
		}\,.
\end{align}
\end{subequations}

Analogously we define the class $\PercolationClassRooted{p}{o}{k}{s}{E}$ of $k,s$-independent, rooted bond percolations with parameter $p$ on $\Tree$ and the critical values $\PMinE{k,s}$ and $\PMaxE{k,s}$. Define the function
\begin{equation}\label{eq:explicitGK}
	\ExplicitGK:\quad
		[1,\infty]\to]0,1[\qquad
		y \to 1-\frac{y-1}{y^{k+1}}
\end{equation}
and the value
\begin{equation}\label{eq:pShearerKFuzzZDefinition}
	\PShearerKZ:=1-\PowerFracDualK\,.
\end{equation}
We reveal their motivation in proposition \ref{prop:pShearerKFuzzZ} and \ref{prop:secondMomentBound} respectively. Our main result determines the critical values \eqref{eq:rootedCriticalValues}:

\begin{Thm}\label{thm:valueOfRootedCriticalValues}
$\ForAll k, s\in\NatNumZero$:
\begin{subequations}\label{eq:valueOfRootedCriticalValues}
\begin{align}
	\label{eq:valueOfRootedPMinT}
	\PMinV{k,s}=\PMinE{k,s}&=\frac{1}{\BranchingNumberT^{k+1}}\\
	\label{eq:valueOfRootedPMaxT}
	\PMaxV{k,s}=\PMaxE{k,s}&=\begin{cases}
		1-\frac{\BranchingNumberT-1}{\BranchingNumberT^{k+1}}
		=\ExplicitGK(\BranchingNumberT)
			&\text{if }\BranchingNumberT\le\frac{k+1}{k}\\
		1-\PowerFracDualK=\PShearerKZ
			&\text{if }\BranchingNumberT\ge\frac{k+1}{k}\,,\\
	\end{cases}
\end{align}
where we interpret $\frac{1}{0}:=\infty$ in the case $k=0$.
\end{subequations}
\end{Thm}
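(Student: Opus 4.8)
The plan is to prove each of the four equalities by matching lower and upper bounds, treating $\PMinV{k,s},\PMinE{k,s},\PMaxV{k,s},\PMaxE{k,s}$ in parallel; the coincidence of the bond and site values follows from standard couplings (subdividing, resp.\ contracting, edges) that leave $\BranchingNumberT$ unchanged, and Theorem \ref{thm:valueOfCriticalValuesK} is the case $s=k$ once one checks, as announced after its statement, that (un)rooting does not affect percolation. For the lower bounds $\PMinV{k,s},\PMinE{k,s}\ge\BranchingNumberT^{-(k+1)}$ I would use the first moment method: if the percolation is $k$-independent along downrays, then for $v$ at level $n$ the sites (or edges) obtained by keeping every $(k+1)$-st one along $\TreePath{o}{v}$ are pairwise at distance $>k$, hence jointly independent, so $\Proba(o\ConnectedTo v)\le p^{\lceil (n+1)/(k+1)\rceil}\le(p^{1/(k+1)})^{\NodeLevel v}$. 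Summing over a minimal cutset $\Pi$ and invoking the cutset form of \eqref{eq:branchingNumber} with $\lambda:=p^{-1/(k+1)}$, which exceeds $\BranchingNumberT$ precisely when $p<\BranchingNumberT^{-(k+1)}$, gives $\Proba(o\ConnectedTo\infty)\le\inf_{\Pi}\sum_{v\in\Pi}\lambda^{-\NodeLevel v}=0$. As this sees only $\TreePath{o}{v}$, it is insensitive to $s$.

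For the matching inequalities $\PMinV{k,s},\PMinE{k,s}\le\BranchingNumberT^{-(k+1)}$ I would exhibit, for every $p>\BranchingNumberT^{-(k+1)}$, an explicit percolation of parameter $p$ in the class that percolates, refining the construction of \NameBalisterBollobas{}: partition $\Tree$ into blocks of $k+1$ consecutive levels and build a percolation whose open cluster projects onto that of an independent \NameBernoulli{}$(p)$ percolation on the coarsened tree $\Tree'$ (whose vertices are the vertices of $\Tree$ on levels divisible by $k+1$, with $u\to u'$ when $u'$ lies $k+1$ levels below $u$), so that $\Proba(o\ConnectedTo v)\ge c\,p^{\lceil\NodeLevel v/(k+1)\rceil}$. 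Since $\BranchingNumber{\Tree'}=\BranchingNumberT^{k+1}$ and $p>1/\BranchingNumber{\Tree'}$, Theorem \ref{thm:valueOfCriticalValuesZero} applied to $\Tree'$ gives percolation. Arranging the transport of block values through i.i.d.\ randomness so that the result is $k$-independent along downrays and $s$-independent off them for every $s$, and concluding percolation of this concrete quasi-independent model by a second moment computation, is the fiddly but routine part of this half.

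The upper bounds on the $\PMax$'s — that \emph{every} member of the class percolates once $p>\ExplicitGK(\BranchingNumberT)$ (resp.\ $p>\PShearerKZ$) — carry the weight of the theorem; I would establish them by \NameLyons{}' weighted second moment and capacity method, packaged in Proposition \ref{prop:secondMomentBound}. Two ingredients are needed. First, a uniform lower bound $\Proba(o\ConnectedTo v)\ge c\,\rho(p)^{\NodeLevel v}$, where $\rho(p)\in\,]0,1[$ is the exponential decay rate of the probability, under \NameShearersMeasure{} on the $k$-fuzz of $\IntNum$, that a length-$n$ block is all open; this is the content of the minorization \eqref{eq:shearerMinimalityOVOEP}, \eqref{eq:shearerKFuzzZEscapingOVOEPMinoration} — any $k$-independent \NameBernoulli{}$(p)$ process along $\TreePath{o}{v}$ stochastically dominates \NameShearersMeasure{} — combined with the explicit $(k+1)$-factor construction of Section \ref{sec:shearerKFuzzZ}. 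Second, a quasi-independence bound for the percolation kernel as in \eqref{eq:percolationKernelQuasiIndependentMajoration}; since an arbitrary $k$-independent percolation need not be quasi-independent, I would obtain it by comparing the percolation with the \NameShearer{} $(k+1)$-factor lift $\Percolation'$ (a genuine factor of an i.i.d.\ field has conditional dependence of range $O(k)$, hence is quasi-independent with constants uniform in $\Tree$) and noting that $\Percolation$ stochastically dominates $\Percolation'$, so percolation of $\Percolation'$ forces percolation of $\Percolation$. Feeding $\Proba(o\ConnectedTo u)\ge c\,\rho(p)^{\NodeLevel u}$ into the kernel bound and selecting a flow on $\Tree$ of finite $\rho(p)$-energy — which exists exactly when $\rho(p)>1/\BranchingNumberT$ — the Paley--Zygmund inequality along cutsets yields $\Proba(o\ConnectedTo\infty)>0$. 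The remaining arithmetic is Proposition \ref{prop:pShearerKFuzzZ}: $\rho$ is increasing with $\rho(\PShearerKZ)=\tfrac{k}{k+1}$ and $\rho(\ExplicitGK(y))=\tfrac1y$, so $\rho(p)>1/\BranchingNumberT$ for every $p\ge\PShearerKZ$ when $\BranchingNumberT\ge\frac{k+1}{k}$, and for every $p>\ExplicitGK(\BranchingNumberT)$ when $\BranchingNumberT\le\frac{k+1}{k}$, the two formulas agreeing at $\BranchingNumberT=\frac{k+1}{k}$.

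For the matching lower bounds on the $\PMax$'s I would use the same \NameShearer{} factor lift $\Percolation'$ as an explicit non-percolating model, realised so as to be $k$-independent along downrays and \emph{independent} off them (hence in the class for every $s$): when $p<\PShearerKZ$ the probability of a long all-open block under \NameShearersMeasure{} decays superexponentially or vanishes, and when $\PShearerKZ\le p<\ExplicitGK(\BranchingNumberT)$ it decays like $\rho(p)^n$ with $\rho(p)<1/\BranchingNumberT$; in either case $\sum_{v\in\Pi}\Proba(o\ConnectedTo v)$ tends to $0$ along a sequence of cutsets realising $\BranchingNumberT$, so $\Percolation'$ a.s.\ does not percolate, whence $\PMaxV{k,s},\PMaxE{k,s}$ are at least the stated values. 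The step I expect to be the main obstacle is the second ingredient of the previous paragraph: a general $k$-independent percolation is not quasi-independent, so \NameLyons{}' method cannot be applied to it verbatim, and the whole upper bound for $\PMax$ hinges on the comparison with the \NameShearer{} factor — that is, on the minimality of \NameShearersMeasure{} in the strong form needed to dominate it stochastically from below on $\Tree$, together with the uniform quasi-independence of its $(k+1)$-factor lift. Both rest squarely on the explicit construction and the $(k+1)$-factor property of \NameShearersMeasure{} on the $k$-fuzz of $\IntNum$ developed in Section \ref{sec:shearerKFuzzZ}, which is why that construction is indispensable.
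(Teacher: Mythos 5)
Your overall architecture — match lower and upper bounds, first moment for the $\PMin$ lower bound, second moment with kernel estimates for the $\PMax$ upper bound, explicit tree models for the remaining two inequalities, Shearer's measure as the extremal object along downrays — is the paper's. But the pivotal step of the $\PMax$ upper bound, as you have written it, contains a genuine error.

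You propose to establish percolation of an arbitrary $\Percolation\in\PercolationClassRooted{p}{o}{k}{s}{V}$ by comparing it with the Shearer $(k+1)$-factor lift $\Percolation'$, asserting that ``$\Percolation$ stochastically dominates $\Percolation'$, so percolation of $\Percolation'$ forces percolation of $\Percolation$.'' This stochastic domination does not hold. The paper devotes Section \ref{sec:commentOnStochasticDomination} to exactly this point: Proposition \ref{prop:noBoundForStochasticDomination} constructs, for every $k\ge1$, every $b$ and every $p$ arbitrarily close to $1$, a $k$-independent site percolation of parameter $\hat p\ge p$ on a tree of branching number $b$ that stochastically dominates \emph{only} the trivial field. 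And the discussion after Theorem \ref{thm:valueOfRootedCriticalValues} in Section \ref{sec:mainResults} states explicitly that the comparison with the $\TheXi$- and $\eta$-parametrised independent models is ``solely in terms of the probability to percolate'' and that there is ``no direct relation between the clusters (like a coupling between the percolations) and in particular no stochastic domination.'' The minimality of \NameShearersMeasure{} (Lemma \ref{lem:shearerMinimality}) only gives inequalities of the form $\Proba(Z_W=\vec 1)\ge\ShearerMeasure{G}{p}(Y_W=\vec 1)$ and $\Proba(Z_{\widetilde W}=\vec 1\mid Z_W=\vec 1)\ge\ShearerMeasure{G}{p}(Y_{\widetilde W}=\vec 1\mid Y_W=\vec 1)$ for all-$1$ cylinder events; it does not supply a monotone coupling.

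What the paper actually does, and what you would need to do, is bound the percolation kernel of the \emph{given} percolation $\Percolation$ directly, without passing through a dominated model. Proposition \ref{prop:ksIndependentKernelBound} uses only $(k\lor s)$-independence to show $\PercolationKernel(v,w)\le 1/\Proba(o\ConnectedTo t\mid t\ConnectedTo w)$, where $t$ lies on $\TreePath{u}{w}$ at distance $(k\lor s)+1$ from the confluent $u=v\Confluent w$; the factorisation $\Proba(o\ConnectedTo v)\Proba(t\ConnectedTo w)=\Proba(o\ConnectedTo v,t\ConnectedTo w)$ is legitimate because the two downpaths involved are at mutual distance $>(k\lor s)$. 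Only then is Shearer's minimality brought in — applied to the restriction of $\Percolation$ to the single downray through $w$, which is a $k$-independent Bernoulli field on a copy of $\NatNum$ — to get $\Proba(o\ConnectedTo t\mid t\ConnectedTo w)\ge\ShearerMeasure{\FuzzKZ}{p}(\cdots)\ge\TheXi^{\NodeLevel t}$ via \eqref{eq:shearerKFuzzZEscapingOVOEPMinoration}. This yields the exponential kernel bound $\PercolationKernel(v,w)\le\TheXi^{-(k\lor s)-1}\TheXi^{-\NodeLevel{u}}$ for every percolation in the class, which feeds into the second moment / flow argument of Proposition \ref{prop:secondMomentMethodTreeAdaption}. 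So the comparison with Shearer is scalar (conditional open-path probabilities along a ray), not a coupling, and quasi-independence of the Shearer factor lift is irrelevant: the kernel of $\Percolation$ itself is bounded. Incidentally, even the quasi-independence of the canonical model $\PercolationCanonicalK{\PShearerKZ}$ is doubtful at the critical point — the remark after Proposition \ref{prop:ksIndependentImpliesQuasiIndependent} conjectures it fails there — and the critical case $p=\PShearerKZ$, $\BranchingNumberT>\frac{k+1}{k}$ is precisely one the theorem needs; the exponential bound handles it, a quasi-independence constant would not.

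A smaller point: for the $\PMax$ lower bound you appeal to ``the probability of a long all-open block under \NameShearersMeasure{}'' when $p<\PShearerKZ$, but $\ShearerMeasure{\FuzzKZ}{p}$ is a signed measure there, not a probability measure. The paper circumvents this with the cutup model \ref{mod:cutup}, built from $\ShearerMeasure{\FuzzKL{N}}{\PShearerKL{N}}$ on finite blocks (which is a genuine probability measure), giving a.s.\ bounded cluster diameters and hence nonpercolation, and then letting $N\to\infty$ so that $\PShearerKL{N}\uparrow\PShearerKZ$.
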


We give the proof in section \ref{sec:proofs} and a plot of the results \eqref{eq:valueOfRootedCriticalValues} in figure \ref{fig:principalResults}.\\

\begin{figure}[!htbp]
\begin{center}
\includegraphics[width=12cm]{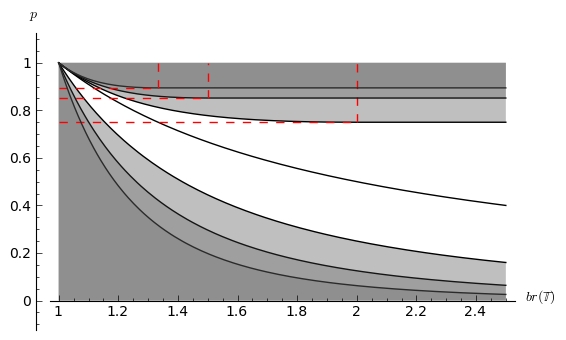}
\end{center}
\caption[Plot of principal results]{(Colour online) The curves of $\PMaxV{k,s}$ and $\PMinV{k,s}$ for $k\in\Set{0,1,2,3}$ and branching numbers in $[1,2.5]$ delimit increasingly shaded regions. The dashed red lines mark the points $\left(\frac{k+1}{k},\PShearerKZ\right)$ for $k\ge 1$, where the behaviour of $\PMaxV{k,s}$ changes.}
\label{fig:principalResults}
\end{figure}

The critical values \eqref{eq:valueOfRootedCriticalValues} are \emph{independent of} the root $o$, the elsewhere-dependence range $s$ and whether we regard bond or site percolation. A change of the root from $o$ to $o'$ turns a $k,s$-independent percolation at worst into a $(k\lor s),(k\lor s)$-independent percolation. Upon closer inspection one sees that this concerns only elements contained in the ball of radius $\GraphMetric{o}{o'}+(k\lor s)$ around $o'$. They are finitely many and one can ignore them as percolation is a tail-event (see the adaption of \NameKolmogorov{}'s zero-one law in lemma \ref{lem:kolmogorovZeroOneLocallyDependent}), hence the percolation essentially remains $k,s$-independent. The independence of the parameter $s$ is a consequence of the use of the moment methods, which only take into account the structure of a rooted percolation along downrays. There is a \emph{bijection} from $E$ to $V\setminus\Set{o}$ mapping an edge to its endpoint further away from the root $o$. This implies that $\PercolationClassRooted{p}{o}{k}{s}{E}\subseteq\PercolationClassRooted{p}{o}{k}{s+1}{V}$. Furthermore we have $\PercolationClassRooted{p}{o}{k}{0}{V}=\PercolationClassRooted{p}{o}{k}{1}{V}=\PercolationClassRooted{p}{o}{k}{0}{E}$ for $k\ge 1$ and $\PercolationClassRooted{p}{o}{0}{0}{V}=\PercolationClassRooted{p}{o}{0}{0}{E}$ as $\PercolationClassRooted{p}{o}{0}{1}{V}=\emptyset$. This allows the interpretation of our explicit site percolation models (models \ref{mod:canonical}, \ref{mod:cutup} and \ref{mod:minimal}) as $k,0$-independent bond percolation models. Hence we \emph{focus exclusively on site percolations} for the remainder of this paper.\\

We can generalize the single parameter $s$ to a family of finite and unbounded dependency parameters $\vec{s}:=\Set{s_v}_{v\in V}$. Then the upper bound on $\PMaxV{k,\vec{s}}$ in proposition \ref{prop:secondMomentBound} does not hold anymore. See the counterexample in model \ref{mod:multiplex} and proposition \ref{prop:multiplexFacts}. The lower bound on $\PMinV{k,\vec{s}}$ in propositions \ref{prop:firstMomentBound} and hence the value of $\PMinV{k,\vec{s}}$ stay valid under these less restrictive conditions and even for $s=\infty$, though.\\

We determine the critical values by a two-pronged approach. General bounds follow from a direct application of moment arguments \cite[sections 5.2/5.3]{LyonsPeres_prob_tree_network} and capacity estimates of percolation kernels \cite[section 1.9]{Lyons_rw_capacity}. In section \ref{sec:connectionWithQuasiIndependence} we show that in every instance where we apply the second moment method our $k,s$-independent percolations are quasi-independent \eqref{eq:quasiIndependent}. Analysis of a number of explicit percolation models (models \ref{mod:canonical}, \ref{mod:cutup} and \ref{mod:minimal}) renders the bounds tight. All explicit models are in the class $\PercolationClassRooted{p}{o}{k}{0}{V}$ and invariant under automorphisms of the rooted tree.\\

\NameShearersMeasure{} \cite{Shearer_problem} on the $k$-fuzz of $\IntNum$ (section \ref{sec:shearerKFuzzZ}) minimizes the conditional probability of the event ``open for $m$ more steps $|$ open for $n$ steps'' along a path of $k$-independent \NameBernoulli{} random variables (see \eqref{eq:shearerMinimalityOVOEP}). Our novel contribution is an explicit construction of \NameShearersMeasure{} on the $k$-fuzz of $\IntNum$ (model \ref{mod:shearerKFuzzZ}) as a $(k+1)$-factor for $p\ge\PShearerKZ$ via a zero-one switch (\eqref{eq:pShearerKFuzzZ} and figure \ref{fig:shearer2FuzzZ}), by reinterpreting calculations from \NameLiggettEtAl{} \cite[corollary 2.2]{LiggettSchonmannStacey_domination}. From the detailed knowledge about \NameShearersMeasure{} on the $k$-fuzz of $\IntNum$ we derive uniform bounds on the percolation kernel over the whole class $\PercolationClassRooted{p}{o}{k}{s}{V}$, leading to $\PMaxV{k,s}$.\\

A \emph{back-of-the-envelope derivation of the critical values} \eqref{eq:rootedCriticalValues} goes as follows: The simplest infinite rooted tree is a single ray isomorph to $\NatNum$. Let $Z:=(Z_n)_{n\in\NatNum}$ be a collection $k$-independent Bernoulli($p$) rvs on $\NatNum$. We have
\begin{equation}\label{eq:intuitiveComparisonRay}
	\TheXi^n\le\Proba(Z_{\IntegerSet{n}}=\vec{1})\le \eta^n\,,
\end{equation}
where the left inequality holds for $p\ge\PShearerKZ$ with the relation $p=1-\TheXi^k(1-\TheXi)$, thanks to \NameShearer{} (see section \ref{sec:shearerMeasure}), and the right one always with the relation $\eta^{k+1}=p$, thanks to $k$-independence. Root $\Tree$ and suppose that \eqref{eq:intuitiveComparisonRay} carries over to $k,s$-independent percolation with parameter $p$. Hence we have a comparison with two independent models with parameters $\TheXi$ and $\eta$, that is
\begin{equation}\label{eq:intuitiveComparisonPercolation}
	\Proba_\TheXi(\text{percolates})\le\Proba_p(\text{percolates})
	\quad\text{and}\quad
	\Proba_p(\text{percolates})\le\Proba_\eta(\text{percolates})\,,
\end{equation}
where the left inequality holds for $p\ge\PShearerKZ$. Plugging in $\BranchingNumberTReciprocal$, the critical value for independent percolation \eqref{eq:valueOfCriticalValuesZero}, for $\TheXi$ and $\eta$ we get the $\ExplicitGK$ part of $\PMaxV{k,s}$ for $\BranchingNumberT\le\frac{k+1}{k}$ and $\PMinV{k,s}$ for all $\BranchingNumberT$.\\

This comparison with two independent models in the last paragraph is solely in terms of the probability to percolate. We have no direct relation between the clusters (like a coupling between the percolations) and in particular no stochastic domination (see section \ref{sec:commentOnStochasticDomination}).\\

Already in the independent case \cite[section 5]{LyonsPeres_prob_tree_network} the percolation behaviour at $p=\BranchingNumberTReciprocal$ depends on additional properties of the tree. This stays the same for $\PMinV{k,s}$ and the $\ExplicitGK$ part of $\PMaxV{k,s}$. It is not so for $p=\PShearerKZ$ and $\BranchingNumberT>\frac{k+1}{k}$: here proposition \ref{prop:secondMomentBound} asserts that all $\Percolation\in\PercolationClassRooted{\PShearerKZ}{o}{k}{s}{V}$ percolate.\\

Recall that the \emph{diameter} of a percolation cluster is the length of the longest geodesic path contained in it. We call a percolation \emph{diameter bounded} if its percolation cluster diameters are \AlmostSureLy{} bounded, i.e.
\begin{equation}\label{eq:diameterBounded}
  \Exists D\in\NatNum:\quad
  \Proba(\sup\Set{\operatorname{diam}(C):C\text{ open cluster in }\Percolation}
  \le D)=1\,.
\end{equation}
The $\PShearerKZ$-line admits another interpretation in terms of cluster diameters:
\begin{Thm}\label{thm:diameterBoundedThreshold}
For each $\varepsilon>0$ there exist $p\in]\PShearerKZ-\varepsilon,\PShearerKZ[$ and $\Percolation\in\PercolationClassRooted{p}{o}{k}{0}{V}$ such that $\Percolation$ is diameter bounded. If $p\ge\PShearerKZ$, then all percolations in $\PercolationClassRooted{p}{o}{k}{0}{V}$ are not diameter bounded.
\end{Thm}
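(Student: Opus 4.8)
The plan is to prove the two halves of Theorem~\ref{thm:diameterBoundedThreshold} separately, each by exhibiting or analyzing an explicit percolation model on the rooted tree $\Tree$, with $s=0$. For the first half I would use the explicit construction of \NameShearersMeasure{} on the $k$-fuzz of $\IntNum$ (model~\ref{mod:shearerKFuzzZ}), but \emph{truncated} at a large length. More precisely, fix $\varepsilon>0$; by continuity of $p=1-\TheXi^k(1-\TheXi)$ near $\TheXi=1$ there is $p\in\,]\PShearerKZ-\varepsilon,\PShearerKZ[\,$ with a corresponding $\TheXi<1$. Using the $(k+1)$-factor structure of \NameShearersMeasure{} (the zero-one switch of \eqref{eq:pShearerKFuzzZ}) one sees that along any single ray the probability of an open run of length $n$ decays like $\TheXi^n$; the key is that one can build a $k,0$-independent site percolation on all of $\Tree$ from independent copies of this $(k+1)$-factor along the downrays, in such a way that open clusters project onto open runs in the factor picture. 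Then truncate: at scale $D$, force a ``$0$'' deterministically at every vertex whose level is a multiple of $D$. This keeps $k$-independence along downrays (the forced zeros only lower the parameter locally, and we can absorb the parameter loss by taking $D$ large, keeping $p$ in the prescribed interval), keeps $0$-independence elsewhere since the construction is still a local factor, and makes every open cluster diameter-bounded by $2D$. So this model lies in $\PercolationClassRooted{p}{o}{k}{0}{V}$ and is diameter bounded, giving the first claim.

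For the second half, suppose $p\ge\PShearerKZ$ and let $\Percolation\in\PercolationClassRooted{p}{o}{k}{0}{V}$ be arbitrary. I would argue by contradiction: if $\Percolation$ were diameter bounded with bound $D$, then along the single ray $\NatNum\subseteq\Tree$ the restriction $Z_{\NatNum}$ is a collection of $k$-independent Bernoulli($p$) rvs in which open runs have length at most $D$ almost surely, i.e. $\Proba(Z_{\IntegerSet{n}}=\vec 1)=0$ for $n>D$. But the left inequality in \eqref{eq:intuitiveComparisonRay}, which holds precisely because $p\ge\PShearerKZ$, gives $\TheXi^n\le\Proba(Z_{\IntegerSet{n}}=\vec 1)$ with $\TheXi>0$ (since $p\ge\PShearerKZ$ forces $\TheXi$ bounded away from $0$ — in fact $\TheXi\ge\frac{k}{k+1}$). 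This is a contradiction for $n>D$. Hence no $\Percolation\in\PercolationClassRooted{p}{o}{k}{0}{V}$ can be diameter bounded.

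The main obstacle, and the step I would spend the most care on, is the first half: making the truncated-\NameShearer{} construction genuinely $k$-independent along downrays \emph{and} $0$-independent among vertices on distinct downrays, simultaneously, while keeping the parameter in the narrow window $]\PShearerKZ-\varepsilon,\PShearerKZ[\,$. Because \NameShearersMeasure{} on the $k$-fuzz of $\IntNum$ is a $(k+1)$-factor of an i.i.d.\ process (Section~\ref{sec:shearerKFuzzZ}), one can drive the tree-model by assigning i.i.d.\ labels to vertices and letting $Z_v$ be determined by the labels in the geodesic segment of length $k+1$ ending at $v$ on the downray through $v$; this is automatically $k$-independent along downrays and $0$-independent across them, and invariant under rooted automorphisms, matching the framework declared before model~\ref{mod:canonical}. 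The truncation then needs a small perturbation argument: inserting forced zeros at levels divisible by $D$ changes the single-site marginal only on those levels, so one passes to a slightly larger parameter $p'$ in the i.i.d.\ driving process to compensate, and lets $D\to\infty$ so that $p'\downarrow p$; choosing $p$ itself just below $\PShearerKZ$ at the outset keeps everything in the required interval. Verifying that open clusters really are contained between two consecutive ``barrier'' levels — hence have diameter $\le 2D$ — is then immediate from the definition of a minimal vertex cutset.
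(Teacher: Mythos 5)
Your proof of the second half is correct and is essentially the paper's argument: by the minimality of \NameShearersMeasure{} and the minoration \eqref{eq:shearerKFuzzZEscapingOVOEPMinoration}, for $p\ge\PShearerKZ$ and any $n$ one has $\Proba(Z_{\TreePath{o}{v}}=\vec{1})\ge\TheXi^n>0$ with $\TheXi\ge\frac{k}{k+1}$, which is incompatible with any deterministic bound on cluster diameter.

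The first half, however, contains two genuine gaps. First, the relation $p=1-\TheXi^k(1-\TheXi)=1-\ExplicitHK(\TheXi)$ satisfies $p\ge\PShearerKZ$ for \emph{every} $\TheXi\in[0,1]$, because $\ExplicitHK$ attains its maximum $\frac{k^k}{(k+1)^{k+1}}$ at $\TheXi=\frac{k}{k+1}$; near $\TheXi=1$ one has $p$ near $1$, not near $\PShearerKZ-\varepsilon$. So there is no $p<\PShearerKZ$ with a corresponding $\TheXi$, and model~\ref{mod:shearerKFuzzZ} simply does not exist in the range where you want to work. Second, even granting a starting model, forcing a deterministic $0$ at every vertex whose level is a multiple of $D$ produces a field whose marginal at those vertices is $0$, not $p$; a percolation in $\PercolationClassRooted{p}{o}{k}{0}{V}$ must have the \emph{same} marginal parameter $p$ at every site, so this truncated field is not in the class, and no reparametrization of the driving i.i.d.\ labels can repair a marginal that has been collapsed to a point mass.

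The paper's route avoids both problems. It uses the cutup model~\ref{mod:cutup}: pick $N$ with $\PShearerKZ-\varepsilon<\PShearerKL{N}<\PShearerKZ$ (possible by \eqref{eq:pShearerKFuzzZ}), put independent copies of $\ShearerMeasure{\FuzzKL{N}}{\PShearerKL{N}}$ on consecutive blocks of $N$ levels, and tree-fission the resulting process onto $\Tree$. The crucial point is that at the finite critical value $\PShearerKL{N}$, \NameShearersMeasure{} on $\FuzzKL{N}$ satisfies $\ShearerCriticalFunction{\FuzzKL{N}}(\PShearerKL{N})=0$: the all-ones configuration on a block of $N$ consecutive sites has probability zero, so every block \AlmostSureLy{} contains a closed site, while every single site still has marginal exactly $\PShearerKL{N}$. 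This gives the blocking behaviour you wanted from your forced zeros, but randomly and with the correct uniform marginal, yielding a percolation in $\PercolationClassRooted{\PShearerKL{N}}{o}{k}{0}{V}$ with cluster diameters \AlmostSureLy{} bounded by $4N-4$ (proposition~\ref{prop:cutupModelFacts}). The lesson is that the parameter window $]\PShearerKZ-\varepsilon,\PShearerKZ[$ is reached not by perturbing $\ShearerMeasure{\FuzzKZ}{p}$ but by using the finite-line critical measures $\ShearerMeasure{\FuzzKL{N}}{\PShearerKL{N}}$ whose critical values approach $\PShearerKZ$ from below.
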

\section{\NameShearersMeasure{}}
\label{sec:shearerMeasure}
Throughout this section we assume $q:=1-p$.
\subsection{Definition and general properties}
\label{sec:shearerDefinitionAndGeneralProperties}
The graph $G:=(V,E)$ is a \emph{dependency graph} of a random field $Z:=\Set{Z_v}_{v\in V}$ iff
\begin{equation}\label{eq:strongDependencyGraph}
	\ForAll A,B\subset V:\quad
	\GraphMetric{A}{B}>1\Then Z_A\text{ is independent of }Z_B\,,
\end{equation}
that is non-adjacent subsets index independent subfields. The random field $Z$ may have several different dependency graphs \cite[section 4.1]{ScottSokal_repulsive}, in particular one can always add edges. A question which arose naturally in the context of the probabilistic method \cite{ErdosLovasz_hypergraphs} is: If we take $Z$ to be a \NameBernoulli{} random field with parameter $p$ and dependency graph $G$, what are the parameters $p$ for which we can guarantee that $\Proba(Z_V=\vec{1})>0$?\\

\NameShearer{} \cite{Shearer_problem} answered this question for finite $G$. He defined \emph{\NameShearer{}'s (signed) measure} $\ShearerMeasure{G}{p}$ on set $\Set{0,1}^V$ by setting the marginals \eqref{eq:shearerZeroMarginalDefinition} and constructing the other events by the \emph{inclusion-exclusion principle} \eqref{eq:shearerInclusionExclusion}:
\begin{subequations}\label{eq.shearersMeasure}
\begin{align}
	\label{eq:shearerZeroMarginalDefinition}
	\ForAll B\subseteq V:\quad&
	\ShearerMeasure{G}{p}(Y_B = \vec{0}):=
	\begin{cases}
		q^{\Cardinality{B}} &B\text{ independent}\\
		0 &B\text{ not independent}
	\end{cases}\\
	\label{eq:shearerInclusionExclusion}
	\ForAll B\subseteq V:\quad&
	\ShearerMeasure{G}{p}(Y_B = \vec{0},Y_{V\setminus B}=\vec{1})
	:=\sum_{\substack{B\subseteq T\subseteq V\\T\text{ independent}}}
	(-1)^{\Cardinality{T}-\Cardinality{B}} q^{\Cardinality{T}}\,.
\end{align}
\end{subequations}
Recall that an \emph{independent set of vertices} (in the graph-theoretic sense) contains no adjacent vertices. It is the second part of \eqref{eq:shearerZeroMarginalDefinition}, assigning zero probability to every realization with adjacent $0$s, that renders \NameShearersMeasure{} special among all measures with parameter $p$ and dependency graph $G$. Define the \emph{critical function}
\begin{equation}\label{eq:criticalShearerFunction}
	\ShearerCriticalFunction{G}(p)
	:= \ShearerMeasure{G}{p}(Y_V=\vec{1})
	= \sum_{\substack{T\subseteq V\\T\text{ independent}}}
		(-q)^{\Cardinality{T}}\,.
\end{equation}
It satisfies a \emph{fundamental identity}: $\ForAll v\in V, v\not\in W\subsetneq V, p\in[0,1]:$
\begin{equation}\label{eq:criticalShearerFunctionFundamentalIdentity}
	\quad
		\ShearerCriticalFunction{\Graph{W\uplus\Set{v}}}(p)
		=\ShearerCriticalFunction{\Graph{W}}(p)
		-q\,\ShearerCriticalFunction{\Graph{W\setminus\Neighbours{v}}}(p)\,,
\end{equation}
derived by discriminating between independent sets containing $v$ and those not. \NameShearersMeasure{} is a priori signed and only becomes a probability measure starting at a \emph{critical value} \cite[theorem 4.1 and proposition 2.18]{ScottSokal_repulsive}
\begin{equation}\label{eq:pShearerFinite}
	\PShearer{G}
	:= \max\Set{p: \ShearerCriticalFunction{G}(p)\le 0}
	= \min\Set{p: \ShearerMeasure{G}{p}\text{ is a probability measure}}\,.
\end{equation}
We emphasize that $\ShearerCriticalFunction{G}(\PShearer{G})=0$. For $p\ge\PShearer{G}$ the critical function $\ShearerCriticalFunction{G}(p)$ is the strictly monotone increasing probability that our realization \emph{contains only $1$s} \cite[proposition 2.18]{ScottSokal_repulsive}. The key property of \NameShearer{}'s probability measure is:
\begin{Lem}[\cite{Shearer_problem}]
\label{lem:shearerMinimality}
Let $Z$ be a random \NameBernoulli{} field with parameter $p\ge\PShearer{G}$ and dependency graph $G$. Let $Y$ be $\ShearerMeasure{G}{p}$-distributed. Then $\ForAll W\subseteq V$:
\begin{subequations}\label{eq:shearerMinimality}
\begin{equation}\label{eq:shearerMinimalityCriticalFunction}
	\Proba(Z_W=\vec{1})
	\ge\ShearerMeasure{G}{p}(Y_W=\vec{1})
	=\ShearerCriticalFunction{\Graph{W}}(p)
	\ge 0
\end{equation}
and $\ForAll W\subseteq \widetilde{W}\subseteq V$: if $\ShearerCriticalFunction{\Graph{W}}(p)>0$, then
\begin{equation}\label{eq:shearerMinimalityOVOEP}
	\Proba(Z_{\widetilde{W}}=\vec{1}|Z_{W}=\vec{1})
	\ge \ShearerMeasure{G}{p}(Y_{\widetilde{W}}=\vec{1}|Y_{W}=\vec{1})
	= \frac{\ShearerCriticalFunction{\Graph{\widetilde{W}}}(p)}%
	       {\ShearerCriticalFunction{\Graph{W}}(p)}
	\ge 0\,.
\end{equation}
\end{subequations}
\end{Lem}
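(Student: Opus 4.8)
The plan is to first settle the two equalities together with the non-negativity, and then prove the two inequalities simultaneously by a single induction on $\Cardinality{W}$ whose inductive hypothesis is the cleared-denominator form of the conditional bound \eqref{eq:shearerMinimalityOVOEP}. For the equality $\ShearerMeasure{G}{p}(Y_W=\vec{1})=\ShearerCriticalFunction{\Graph{W}}(p)$ I would expand $\Set{Y_W=\vec{1}}=\bigcap_{v\in W}\Set{Y_v=0}^c$ by finite inclusion--exclusion and substitute the prescribed marginals \eqref{eq:shearerZeroMarginalDefinition}, getting $\ShearerMeasure{G}{p}(Y_W=\vec{1})=\sum_{S\subseteq W}(-1)^{\Cardinality{S}}\ShearerMeasure{G}{p}(Y_S=\vec{0})=\sum_{S\subseteq W,\,S\text{ independent}}(-q)^{\Cardinality{S}}$, which is exactly \eqref{eq:criticalShearerFunction} written for the induced subgraph $\Graph{W}$ (a subset of $W$ is independent in $G$ iff it is independent in $\Graph{W}$). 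The conditional equality then drops out because $W\subseteq\widetilde{W}$ forces $\Set{Y_{\widetilde{W}}=\vec{1}}\subseteq\Set{Y_W=\vec{1}}$, so the left side of \eqref{eq:shearerMinimalityOVOEP} equals the ratio $\ShearerCriticalFunction{\Graph{\widetilde{W}}}(p)/\ShearerCriticalFunction{\Graph{W}}(p)$. Non-negativity of $\ShearerCriticalFunction{\Graph{W}}(p)$ for $p\ge\PShearer{G}$ is immediate: by \eqref{eq:pShearerFinite} and the cited \NameScottSokal{} facts $\ShearerMeasure{G}{p}$ is then a genuine probability measure, hence assigns non-negative mass to $\Set{Y_W=\vec{1}}$. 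From the fundamental identity \eqref{eq:criticalShearerFunctionFundamentalIdentity} and this non-negativity I also record that $\ShearerCriticalFunction{\Graph{\cdot}}(p)$ is non-increasing under adding vertices, so that $0\le\ShearerCriticalFunction{\Graph{W}}(p)\le\ShearerCriticalFunction{\Graph{W'}}(p)$ whenever $W'\subseteq W$; in particular $\ShearerCriticalFunction{\Graph{W'}}(p)>0$ for all $W'\subseteq W$ as soon as $\ShearerCriticalFunction{\Graph{W}}(p)>0$, and $\ShearerCriticalFunction{\Graph{W}}(p)=0$ as soon as $\ShearerCriticalFunction{\Graph{W'}}(p)=0$ for some $W'\subseteq W$.

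The core is the induction on $\Cardinality{W}$ of the following claim: \emph{for all $W'\subseteq W\subseteq V$ with $\ShearerCriticalFunction{\Graph{W'}}(p)>0$ one has $\Proba(Z_W=\vec{1})\ge\tfrac{\ShearerCriticalFunction{\Graph{W}}(p)}{\ShearerCriticalFunction{\Graph{W'}}(p)}\,\Proba(Z_{W'}=\vec{1})$}. Taking $W'=\emptyset$, where $\ShearerCriticalFunction{\Graph{\emptyset}}(p)=1$, yields \eqref{eq:shearerMinimalityCriticalFunction}, and dividing by $\Proba(Z_W=\vec{1})\ge\ShearerCriticalFunction{\Graph{W}}(p)>0$ yields \eqref{eq:shearerMinimalityOVOEP}. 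The case $W'=W$ (and the base $W=\emptyset$) is trivial. For $W'\subsetneq W$ pick $v\in W\setminus W'$, set $\widehat{W}:=W\setminus\Set{v}$ and $R:=\widehat{W}\setminus\Neighbours{v}$; if $\ShearerCriticalFunction{\Graph{\widehat{W}}}(p)=0$ the claim is vacuous by the monotonicity above, so assume it is positive (whence also $\ShearerCriticalFunction{\Graph{R}}(p)\ge\ShearerCriticalFunction{\Graph{\widehat{W}}}(p)>0$). Write $\Proba(Z_W=\vec{1})=\Proba(Z_{\widehat{W}}=\vec{1})-\Proba(Z_v=0,Z_{\widehat{W}}=\vec{1})$, bound $\Proba(Z_v=0,Z_{\widehat{W}}=\vec{1})\le\Proba(Z_v=0,Z_R=\vec{1})=q\,\Proba(Z_R=\vec{1})$ using that $Z_v$ is independent of $Z_R$ by \eqref{eq:strongDependencyGraph} (no vertex of $R$ equals or is adjacent to $v$), apply the inductive hypothesis to the pair $R\subseteq\widehat{W}$ in the form $\Proba(Z_R=\vec{1})\le\tfrac{\ShearerCriticalFunction{\Graph{R}}(p)}{\ShearerCriticalFunction{\Graph{\widehat{W}}}(p)}\Proba(Z_{\widehat{W}}=\vec{1})$, and combine with the fundamental identity $\ShearerCriticalFunction{\Graph{W}}(p)=\ShearerCriticalFunction{\Graph{\widehat{W}}}(p)-q\,\ShearerCriticalFunction{\Graph{R}}(p)$ to get $\Proba(Z_W=\vec{1})\ge\tfrac{\ShearerCriticalFunction{\Graph{W}}(p)}{\ShearerCriticalFunction{\Graph{\widehat{W}}}(p)}\Proba(Z_{\widehat{W}}=\vec{1})$. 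A second application of the inductive hypothesis, now to $W'\subseteq\widehat{W}$ (both occurring factors being non-negative), closes the step.

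The main obstacle, and the reason one cannot run the induction on the bare inequality \eqref{eq:shearerMinimalityCriticalFunction} alone, is the direction of the estimate on $\Proba(Z_R=\vec{1})$: discarding the constraints on $\Neighbours{v}\cap\widehat{W}$ and using independence produces an \emph{upper} bound on $\Proba(Z_R=\vec{1})$ in terms of $\Proba(Z_{\widehat{W}}=\vec{1})$, whereas \eqref{eq:shearerMinimalityCriticalFunction} only provides a lower bound on $\Proba(Z_R=\vec{1})$; it is precisely the conditional form --- the ratio $\Proba(Z_R=\vec{1})/\Proba(Z_{\widehat{W}}=\vec{1})$ being dominated by $\ShearerCriticalFunction{\Graph{R}}(p)/\ShearerCriticalFunction{\Graph{\widehat{W}}}(p)$ --- that makes the telescoping close, so the induction must be run on the stronger statement. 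The remaining delicate point is the positivity bookkeeping: one must track, via the monotonicity of $\ShearerCriticalFunction{\Graph{\cdot}}(p)$ under adding vertices, exactly which induced subgraphs have strictly positive critical function, so that every conditional probability and every ratio in the argument is legitimate, the degenerate configurations all being absorbed into the trivial branch $\ShearerCriticalFunction{\Graph{\widehat{W}}}(p)=0$.
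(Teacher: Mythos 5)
Your proof is correct and takes essentially the same route as the paper: both induct on $\Cardinality{W}$ and combine the fundamental identity \eqref{eq:criticalShearerFunctionFundamentalIdentity}, the dependency-graph bound $\Proba(Z_v=0,Z_{\widehat{W}}=\vec{1})\le q\,\Proba(Z_{\widehat{W}\setminus\Neighbours{v}}=\vec{1})$, and the observation that the estimate must be run on the conditional ratio rather than the bare probability. The only organisational difference is that you carry a strengthened cleared-denominator claim for arbitrary $W'\subseteq W$ and invoke it in one shot, whereas the paper inducts on the one-vertex-extension form of \eqref{eq:shearerMinimalityOVOEP} and recovers the ratio $\Proba(Z_{W\setminus\Neighbours{v}}=\vec{1})/\Proba(Z_W=\vec{1})$ by telescoping it over the neighbours $W\cap\Neighbours{v}$ and applying the hypothesis factor by factor; the two bookkeeping schemes are interchangeable.
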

\begin{proof}
It suffices to prove \eqref{eq:shearerMinimalityOVOEP} inductively for one-vertex extensions with $\widetilde{W}=W\uplus\Set{v}$. We prove \eqref{eq:shearerMinimality} jointly by induction over the cardinality of $W$. The induction base for $W=\Set{w}$ is:
\begin{equation*}
   \Proba(Z_w=1)
 = p
 = \ShearerMeasure{G}{p}(Y_w=1)
 =\ShearerCriticalFunction{(\Set{w},\emptyset)}(p)\,.
\end{equation*}
Induction step $W\to \widetilde{W}$: suppose that $\ShearerMeasure{G}{p}(Y_W=\vec{1})=0$. Hence also $\ShearerMeasure{G}{p}(Y_{\widetilde{W}}=\vec{1})=0$ and \eqref{eq:shearerMinimalityCriticalFunction} holds trivially. If $\ShearerMeasure{G}{p}(Y_{W}=\vec{1})>0$, then $\Proba(Z_{W}=\vec{1})>0$ by the induction hypothesis. Let $W\cap\Neighbours{v}=:\Set{w_1,\dotsc,w_m}$ and $W_i:=W\setminus\Set{w_i,\dotsc,w_m}$. If $m=0$, then we revert to the equality in the induction base. If $m\ge 1$ then
\begin{align*}
	&\FirstAlign\Proba(Z_v=1|Z_W=\vec{1})\\
	&=\frac{\Proba(Z_v=1,Z_W=\vec{1})}{\Proba(Z_W=\vec{1})}\\
	&\ge \frac%
		{\Proba(Z_W=\vec{1})-q\,\Proba(Z_{W\setminus\Neighbours{v}}=\vec{1})}
		{\Proba(Z_W=\vec{1})}
		&\text{as $Z$ has dependency graph $G$ \eqref{eq:strongDependencyGraph}}\\
	&= 1 - \frac{q}{\prod_{i=1}^m \Proba(Z_{w_i}=\vec{1}|Z_{W_i}=\vec{1})}\\
	&\ge 1 - \frac{q}{\prod_{i=1}^m
		\ShearerMeasure{G}{p}(Y_{w_i}=\vec{1}|Y_{W_i}=\vec{1})}
		&\text{induction hypothesis as $\Cardinality{W_i}<\Cardinality{W}$ }\\
	&= \frac%
		{\ShearerMeasure{G}{p}(Y_W=\vec{1})-
		 q\,\ShearerMeasure{G}{p}(Y_{W\setminus\Neighbours{v}}=\vec{1})}
		{\ShearerMeasure{G}{p}(Y_W=\vec{1})}\\
	&=\frac{\ShearerMeasure{G}{p}(Y_v=1,Y_W=\vec{1})}{\ShearerMeasure{G}{p}(Y_W=\vec{1})}
		&\text{using the fundamental identity \eqref{eq:criticalShearerFunctionFundamentalIdentity}}\\
	&=\ShearerMeasure{G}{p}(Y_v=1|Y_W=\vec{1})\,.
\end{align*}
This proves \eqref{eq:shearerMinimalityOVOEP}. To obtain \eqref{eq:shearerMinimalityCriticalFunction} it suffices to see that
\begin{multline*}
  \Proba(Z_{\widetilde{W}}=\vec{1})
  =\Proba(Z_v=1|Z_W=\vec{1})\Proba(Z_W=\vec{1})\\
  \ge\ShearerMeasure{G}{p}(Y_v=1|Y_W=\vec{1})\ShearerMeasure{G}{p}(Y_W=\vec{1})
  =\ShearerMeasure{G}{p}(Y_{\widetilde{W}}=\vec{1})\,.
\end{multline*}
\end{proof}
Finally we see that for $p\ge\PShearer{G}$ the probability measure $\ShearerMeasure{G}{p}$
\begin{subequations}\label{eq:shearerCharacterizationProbability}
\begin{gather}
	\text{has dependency graph $G$,}
	\label{eq:shearerCharacterizationProbabilityDependencyGraph}\\
	\text{has marginal parameter $p$, i.e $\ForAll v\in V: \ShearerMeasure{G}{p}(Y_v=1)=p$,}
	\label{eq:shearerCharacterizationProbabilityMarginalParameter}\\
	\text{and forbids neighbouring $0$s, i.e. $\ForAll (v,w)\in E: \ShearerMeasure{G}{p}(Y_v=Y_w=0)=0$.}
	\label{eq:shearerCharacterizationProbabilityMarginalNeighbouringZeros}
\end{gather}
\end{subequations}

Every probability measure $\nu$ on $\Set{0,1}^V$ fulfilling \eqref{eq:shearerCharacterizationProbability} can be constructed by \eqref{eq.shearersMeasure} and thus coincides with $\ShearerMeasure{G}{p}$. Hence \eqref{eq:shearerCharacterizationProbability} \emph{characterizes} $\ShearerMeasure{G}{p}$.\\

If $G$ is an \emph{infinite graph} we define
\begin{equation}\label{eq:pShearerInfinite}
	\PShearer{G}:=\sup\Set{\PShearer{H}: H\text{ finite subgraph of }G}\,.
\end{equation}
This is well-defined as $\PShearer{(.)}$ is a monotone increasing function over the lattice of finite subgraphs (strictly monotone increasing for connected subgraphs) \cite[proposition 2.15]{ScottSokal_repulsive}. For $p\ge\PShearer{G}$ the family $\Set{\ShearerMeasure{\Graph{W}}{p}:W\subseteq V,W\text{ finite}}$ forms a consistent family à la \NameKolmogorov{} \cite[(36.1) \& (36.2)]{Billingsley_probability}. Hence \NameKolmogorov{}'s existence theorem \cite[theorem 36.2]{Billingsley_probability} establishes the existence of an extension of this family, which we call $\ShearerMeasure{G}{p}$. The uniqueness of this extension is given by the $\pi$-$\lambda$ theorem \cite[theorem 3.3]{Billingsley_probability}. Furthermore $\ShearerMeasure{G}{p}$ fulfils \eqref{eq:shearerCharacterizationProbability} on the infinite graph $G$. Conversely let $\nu$ be a probability measure having the properties \eqref{eq:shearerCharacterizationProbability}. Then all its finite marginals have them, too and they coincide with \NameShearersMeasure{}. Hence by the uniqueness of the \NameKolmogorov{} extension $\nu$ coincides with $\ShearerMeasure{G}{p}$ and \eqref{eq:shearerCharacterizationProbability} characterizes $\ShearerMeasure{G}{p}$. It follows that the minimal $p$ for \eqref{eq:shearerCharacterizationProbability} to have a solution is $\PShearer{G}$.\\

The reader can find more about \NameShearersMeasure{} in the seminal work by \NameScottSokal{} \cite{ScottSokal_repulsive}, especially the rich connection with \emph{hard core lattice gases} in statistical mechanics and the \emph{\NameLovaszLocalLemma{}} of the probabilistic method in graph theory \cite{ErdosLovasz_hypergraphs}.\\
\subsection{On the \texorpdfstring{$k$-fuzz of $\IntNum$}{k-fuzz of the integers}}
\label{sec:shearerKFuzzZ}
In this section we deal with \NameShearersMeasure{} on $\FuzzKZ$, the \emph{$k$-fuzz} of $\IntNum$. It is the graph with vertices $\IntNum$ and edges for every pair of integers at distance less than or equal to $k$. Recall that an $X$-valued process indexed by $\IntNum$ is called a \emph{$(k+1)$-factor} iff there exists a measurable function $f:[0,1]^{(k+1)}\to X$ such that for every $n\in\IntNum: X_n=f(U_n,\dotsc,U_{n+k})$, where $\Set{U_n}_{n\in\IntNum}$ is a
\Iid{} sequence of Uniform($[0,1]$) rvs. It follows that every $(k+1)$-factor is $k$-independent, stationary and has $\FuzzKZ$ as dependency graph.\\

We derive the critical value $\PShearerKZ$ in proposition \ref{prop:pShearerKFuzzZ} (thus validating \eqref{eq:pShearerKFuzzZDefinition}), construct $\ShearerMeasure{\FuzzKZ}{p}$ explicitly in model \ref{mod:shearerKFuzzZ} as a $(k+1)$-factor and derive asymptotic properties in proposition \ref{prop:shearerKFuzzZEstimates}. For $k\in\NatNumZero$ define the function
\begin{equation}\label{eq:explicitHK}
	\ExplicitHK:\quad [0,1]\to\RealNum \qquad z \mapsto z^k(1-z)\,.
\end{equation}
It attains its maximum at $\frac{k}{k+1}$ with value $\PowerFracDualK$. If $p\in[\PShearerKZ,1]$, then the equation
\begin{equation}\label{eq:theXi}
	\ExplicitHK(\TheXi)=q
\end{equation}
has a \emph{unique solution} $\TheXi:=\TheXi(p,k)$ lying in the interval $[k/(k+1),1]$. Denote by $\FuzzKL{N}$ the $k$-fuzz of a line of $N$ points and by $\FuzzKN$ the $k$-fuzz of $\NatNum$. It is easy to see that $\PShearerKN=\PShearerKZ$ and $\ShearerMeasure{\FuzzKN}{p}$ is just the projection of $\ShearerMeasure{\FuzzKZ}{p}$. Hence all the properties of and estimates for $\ShearerMeasure{\FuzzKZ}{p}$ stated in the following also hold for $\ShearerMeasure{\FuzzKN}{p}$.

\begin{Prop}\label{prop:pShearerKFuzzZ}
\begin{equation}\label{eq:pShearerKFuzzZ}
	\PShearerKL{N}
	\xrightarrow[N\to\infty]{} 1-\PowerFracDualK
	= \PShearerKZ = \PShearerKN\,.
\end{equation}
\end{Prop}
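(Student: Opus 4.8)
The plan is to recast the statement as the convergence of the least positive root of an explicit linear recurrence, and then to bound that root from both sides.

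\textbf{Reduction.} Set $q:=1-p$ and $a_N(q):=\ShearerCriticalFunction{\FuzzKL{N}}(p)$, with $a_0\equiv 1$. For $N\le k+1$ the graph $\FuzzKL{N}$ is complete, so its only independent sets are $\emptyset$ and singletons and $a_N(q)=1-Nq$. For $N\ge k+1$, applying the fundamental identity \eqref{eq:criticalShearerFunctionFundamentalIdentity} to the last vertex $N$ of $\FuzzKL{N}$ (whose closed neighbourhood is $\{N-k,\dots,N\}$ and whose removal leaves $\FuzzKL{N-1-k}$) gives the recurrence
\begin{equation*}
	a_N(q)=a_{N-1}(q)-q\,a_{N-1-k}(q)\,,
\end{equation*}
linear of order $k+1$, with characteristic equation $x^{k+1}-x^k+q=0$, i.e. $\ExplicitHK(x)=q$ for $\ExplicitHK$ as in \eqref{eq:explicitHK}; recall $\max_{[0,1]}\ExplicitHK=h^{*}:=\PowerFracDualK$, attained at $k/(k+1)$. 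By \eqref{eq:pShearerFinite}, $q_N:=1-\PShearerKL{N}$ is the least positive zero of the polynomial $a_N(\cdot)$ (note $a_N(0)=1$), and $(q_N)_N$ is nonincreasing by monotonicity of $\PShearer{(.)}$ over finite subgraphs \cite{ScottSokal_repulsive}. Since every $\FuzzKL{N}$ is a subgraph of both $\FuzzKN$ and $\FuzzKZ$, while every finite subgraph of either is isomorphic to a subgraph of some $\FuzzKL{N}$, definition \eqref{eq:pShearerInfinite} gives $\PShearerKZ=\PShearerKN=\lim_N\PShearerKL{N}$. It therefore suffices to show $\lim_N q_N=h^{*}$. (The case $k=0$ is immediate: $\FuzzKL{N}$ is edgeless, $\PShearerKL{N}=0$, and $h^{*}=1$; assume $k\ge 1$, so $h^{*}<1$.)

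\textbf{Lower bound $q_N\ge h^{*}$.} Fix $q\in(0,h^{*})$ and let $x_-\in(0,k/(k+1))$ be the smaller root of $\ExplicitHK(x_-)=q$. I would prove by strong induction that $a_N(q)\ge x_-\,a_{N-1}(q)>0$ for every $N\ge 1$. For $1\le N\le k+1$, where $a_N=1-Nq$, this is equivalent (using $q=x_-^{\,k}(1-x_-)$) to the elementary inequality $Nq+x_-^{\,k+1}\le 1$, which holds since $\tfrac{1-x_-^{\,k+1}}{q}=\sum_{i=0}^{k}x_-^{-i}>k+1\ge N$. For $N\ge k+1$, telescoping the induction hypothesis yields $a_{N-k}\le x_-^{-k}a_N$, hence, using the identity $q\,x_-^{-k}=1-x_-$,
\begin{equation*}
	a_{N+1}=a_N-q\,a_{N-k}\ge a_N\bigl(1-q\,x_-^{-k}\bigr)=x_-\,a_N>0\,.
\end{equation*}
Thus $a_N(q)>0$ for all $q\in[0,h^{*})$, so the least positive zero of $a_N$ satisfies $q_N\ge h^{*}$.

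\textbf{Upper bound $\limsup_N q_N\le h^{*}$.} Fix $q_0\in(h^{*},1)$ and suppose, for contradiction, that $a_N(q_0)>0$ for every $N$. The recurrence then forces $a_N<a_{N-1}$, so $r_N:=a_N/a_{N-1}\in(0,1)$; telescoping, $\pi_N:=\prod_{j=N-k}^{N-1}r_j=a_{N-1}/a_{N-1-k}\in(q_0,1)$, and dividing the recurrence by $a_{N-1}$ gives $r_N=1-q_0/\pi_N$. Put $\bar r:=\limsup_N r_N$; then $\bar r>0$ (otherwise $\pi_N\to 0$ and $r_N\to-\infty$) and $\limsup_N\pi_N\le\bar r^{\,k}$. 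As $t\mapsto 1-q_0/t$ is continuous and increasing on $[q_0,1]$, passing to the $\limsup$ in $r_N=1-q_0/\pi_N$ yields $\bar r\le 1-q_0/\bar r^{\,k}$, i.e. $q_0\le\bar r^{\,k}(1-\bar r)=\ExplicitHK(\bar r)\le h^{*}$ --- contradicting $q_0>h^{*}$. Hence $a_N(q_0)\le 0$ for some $N$, which (since $a_N(0)=1$) forces $q_N\le q_0$, and then $q_M\le q_0$ for all $M\ge N$ by monotonicity. Letting $q_0\downarrow h^{*}$ gives $\limsup_N q_N\le h^{*}$, so together with the lower bound $q_N\to h^{*}$, i.e. $\PShearerKL{N}\to 1-h^{*}=\PShearerKZ=\PShearerKN$.

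\textbf{Main obstacle.} The crux is the choice of inductive invariant in the lower bound: the plain statement ``$a_N>0$'' does not propagate, and one must carry the sharp ratio bound $a_N\ge x_-\,a_{N-1}$ anchored to the \emph{smaller} root of $\ExplicitHK(x)=q$, the step closing precisely because $q\,x_-^{-k}=1-x_-$. The upper bound, by contrast, is a soft $\limsup$ argument once the scalar recursion $r_N=1-q_0/\pi_N$ has been isolated, and uses nothing about the non-real roots of $x^{k+1}-x^k+q$.
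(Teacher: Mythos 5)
Your proof is correct, and it takes a genuinely different route from the paper's. The paper starts from the same recurrence (writing $b_n=b_{n-1}-q\,b_{n-k}$, a harmless index typo for $b_{n-1-k}$) and the same ratios, but treats the two inequalities asymmetrically: for $\PShearerKZ\ge 1-\PowerFracDualK$ it first proves by a separate two-stage induction that $\beta_n:=b_n/b_{n-1}$ is \emph{strictly decreasing}, so that the limit $\beta$ exists and solves $q=\ExplicitHK(\beta)\le\PowerFracDualK$; for $\PShearerKZ\le 1-\PowerFracDualK$ it invokes the explicit $(k+1)$-factor of Model~\ref{mod:shearerKFuzzZ}, which produces a probability measure satisfying \eqref{eq:shearerCharacterizationProbability} for all $p\ge 1-\PowerFracDualK$. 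Your proof is self-contained within the recurrence: the $\limsup$ manipulation on $r_N=1-q_0/\pi_N$ sidesteps the monotonicity lemma entirely (you never need $(r_N)$ to converge, only bounds on its $\limsup$), and your lower bound $q_N\ge\PowerFracDualK$ via the inductive invariant $a_N\ge x_-\,a_{N-1}$, anchored at the \emph{smaller} root $x_-$ of $\ExplicitHK(x)=q$, is an argument that does not appear in the paper at all and fully replaces the appeal to the explicit model. The trade-off is one of reuse versus locality: the paper's route makes the monotonicity of $(\beta_n)$ and the zero-one-switch model available as byproducts, and both are needed later ($\beta_n\downarrow\TheXi$ drives Proposition~\ref{prop:shearerKFuzzZEstimates}, and Model~\ref{mod:shearerKFuzzZ} is a central construction), whereas your argument is more elementary and proves exactly what Proposition~\ref{prop:pShearerKFuzzZ} asserts and nothing more.
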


An explicit construction of \NameShearersMeasure{} on $\FuzzKZ$ is given by:

\begin{Mod}\label{mod:shearerKFuzzZ}
Let $p\ge\PShearerKZ$ and $X:=\Set{X_n}_{n\in\IntNum}$ be \Iid{} \NameBernoulli{} rvs with parameter $\TheXi$ as in \eqref{eq:theXi}. Define $Z:=\Set{Z_n}_{n\in\IntNum}$ by
\begin{equation}\label{eq:shearerKFuzzZConstruction}
	\ForAll n\in\IntNum:\quad
	Z_n := 1 - (1-X_n)\prod_{i=1}^k X_{n-i}\,,
\end{equation}
then $Z$ is $\ShearerMeasure{\FuzzKZ}{p}$-distributed.
\end{Mod}
If $k=0$, then the empty product in \eqref{eq:shearerKFuzzZConstruction} disappears and $Z=X$, that is $\ShearerMeasure{\FuzzZeroZ}{p}$ is a \NameBernoulli{} product measure with parameter $p$. Accordingly $\PShearer{\FuzzZeroZ}=0$.\\

A result of \NameAaronsonGilatKeaneDeValk{} \cite[result 4(i) on page 140]{AaronsonGilatKeaneDeValk_algebraic} on the question of the  representability of certain stationary $1$-independent $\Set{0,1}$-valued processes on $\IntNum$ as $2$-factors implies that $\ShearerMeasure{\IntegerSet{n}_1}{p}$ is not representable as a $2$-factor for $p<\frac{3}{4}$. This statement is easily extended to assert non-representability of $\ShearerMeasure{\FuzzKL{n}}{p}$ as a $(k+1)$-factor for every $k,n\in\NatNum$ and $p<\PShearerKZ$. It follows from the fact that for $p<\PShearerKZ$ the sequence $(\beta_n)_{n\in\NatNum}$ in the proof of proposition \ref{prop:pShearerKFuzzZ} does not remain positive.\\

On the other hand, if one fixes $N$ and $p\in[\PShearerKL{N},\PShearerKZ[$, one can get something close to a factor representation. Let $(X_n)_{n=1}^N$ be a collection of independent rvs, with $X_n$ Bernoulli($\beta_n$)-distributed. Then the same rule as in \eqref{eq:shearerKFuzzZConstruction}, truncated for the first $k$ indices, yields a $\ShearerMeasure{\FuzzKL{N}}{p}$-distributed $(Z_n)_{n=1}^N$.\\

\begin{figure}[!htbp]
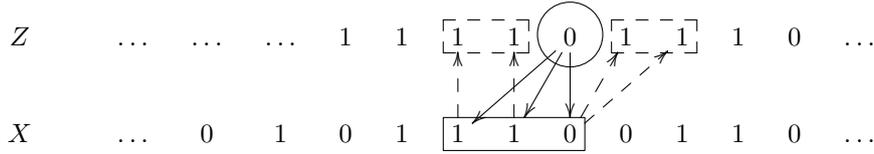

\begin{center}
\begin{displaymath}
\MxyZeroOneSwitch{1em}
\end{displaymath}
\end{center}
\caption[\NameShearersMeasure{} on the $2$-fuzz of $\IntNum$]{A partial view of \NameShearersMeasure{} on the $2$-fuzz of $\IntNum$. The lower row shows a realization of $X$, the upper row the resulting one of $Z$. We point out a $0$ in $Z$, the realizations on its underlying nodes in $X$ (solid downward arrows) and the effect of the zero-one switch (dashed upward arrows), resulting in $1$s on its neighbours up to distance $2$.}
\label{fig:shearer2FuzzZ}
\end{figure}
\begin{proof}[Proof of proposition \ref{prop:pShearerKFuzzZ}]
The inequality $\PShearerKZ\ge 1-\PowerFracDualK$ follows from \cite[theorem 2.1]{LiggettSchonmannStacey_domination}. We repeat it for completeness. Define
 $b_n := \ShearerCriticalFunction{\FuzzKL{n}}(p)$, then $b_n = 1-nq$ for $n\in\IntegerSet{k+1}$ and $b_n=b_{n-1}-q\,b_{n-k}$ for $n>k$, both times using the fundamental identity \eqref{eq:criticalShearerFunctionFundamentalIdentity}. We show by induction that $\beta_n:=\frac{b_n}{b_{n-1}}$ is a strictly monotone falling sequence:\\
$n\in\IntegerSet{k+1}$: $\beta_n=\frac{1-nq}{1-(n-1)q}>\frac{1-(n+1)q}{1-nq}=\beta_{n+1}$ as $n^2>(n-1)(n+1)$.\\
$n\to n+1$: $\beta_{n-k}>\beta_n \Iff  \frac{b_{n-k}}{b_{n}}>\frac{b_{n-k-1}}{b_{n-1}}$ by the induction hypothesis, hence
\begin{equation*}
	\beta_{n+1}=1- \frac{b_{n-k}}{b_{n}}<1-\frac{b_{n-k-1}}{b_{n-1}}=\beta_n\,.
\end{equation*}

The sequence $(\beta_n)_{n\in\NatNum}$ is positive and well-defined iff $p\ge\PShearerKZ$. Upon taking the limit $\beta=\lim_{n\to\infty} \beta_n$ we arrive at the identity $\beta=1-q\beta^{-k}$. Rewrite it to $q=\beta^k(1-\beta)=\ExplicitHK(\beta)$, which has solutions only for $q\le\PowerFracDualK$. Hence $1-\PowerFracDualK\le\PShearerKZ$.\\

The second inequality $\PShearerKZ\le 1-\PowerFracDualK$ follows from model \ref{mod:shearerKFuzzZ}.
\end{proof}
\begin{proof}[Proof of model \ref{mod:shearerKFuzzZ}]
By construction $\Proba(Z_n=0)=\TheXi^k(1-\TheXi)=\ExplicitHK(\TheXi)=q$ and
\begin{align*}
\Proba(Z_n=0)
	&=\Proba(X_{n-1}=\dotso=X_{n-k}=1,X_n=0)\\
	&=\Proba(Z_{n-k}=\dotso=Z_{n-1}=1,Z_n=0,Z_{n+1}=\dotso=Z_{n+k}=1)\,.
\end{align*}
This \emph{zero-one switch} (see figure \ref{fig:shearer2FuzzZ}) guarantees that vertices with distance less than or equal to $k$ can never index a $0$ in the same realization. Therefore $Z$ has no realizations containing neighbouring $0$s with respect to $\FuzzKZ$ as well as the right dependency graph and marginals. Using the characterization \eqref{eq:shearerCharacterizationProbability} we see that $Z$ is $\ShearerMeasure{\FuzzKZ}{p}$-distributed.
\end{proof}

For $k\in\NatNumZero$ fixed define the strictly monotone decreasing function
\begin{equation}\label{eq:theXiFraction}
	\ExplicitFK:
	\quad\Set{0,\dotsc,k}\to\RealNum
	\qquad g\mapsto\begin{cases}
		\frac{(g+1)\TheXi-g}{g\TheXi-(g-1)}&\text{if }k\ge 1\,,\\
		\TheXi&\text{if }k=0\,.
	\end{cases}
\end{equation}

\begin{Prop}\label{prop:shearerKFuzzZEstimates}
We have for every $k\in\NatNumZero$ and $p\ge\PShearerKZ$ the minoration
\begin{subequations}\label{eq:shearerKFuzzZEstimates}
\begin{equation}\label{eq:shearerKFuzzZOVOEPMinoration}
	\ForAll\text{ finite }B\subseteq\IntNum\setminus\Set{0}:\qquad
	\ShearerMeasure{\FuzzKZ}{p}(Y_0=1|Y_B=\vec{1})\ge\ExplicitFK(g_B)\,,
\end{equation}
where $g_B:=0\lor(k+1-d_B)$ and $d_B:=\min\Set{\Modulus{n}:n\in B}$. In particular we have for each $n\in\NatNum$:
\begin{equation}\label{eq:shearerKFuzzZEscapingOVOEPMinoration}
	\ShearerMeasure{\FuzzKZ}{p}(Y_n=1|Y_{\IntegerSet{n-1}}=\vec{1})
	\ge\TheXi
	\quad\text{ and }\quad
	\ShearerCriticalFunction{\FuzzKL{n}}(p)
	\ge\TheXi^n\,.
\end{equation}
And likewise the majoration
\begin{equation}\label{eq:shearerKFuzzZCriticalFunctionMajoration}
	\ForAll \varepsilon>0: \Exists C>0,
	\Exists N\in\NatNum: \ForAll n\ge N:\qquad
	\ShearerCriticalFunction{\FuzzKL{n}}(p)
	\le C [(1+\varepsilon)\TheXi]^n\,.
\end{equation}
\end{subequations}
\end{Prop}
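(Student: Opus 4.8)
Here the plan splits along the three assertions.

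\medskip
The two ``in particular'' bounds in \eqref{eq:shearerKFuzzZEscapingOVOEPMinoration} and the majoration \eqref{eq:shearerKFuzzZCriticalFunctionMajoration} are quick. Put $b_n:=\ShearerCriticalFunction{\FuzzKL{n}}(p)$ and recall the sequence $\beta_n:=b_n/b_{n-1}$ from the proof of proposition \ref{prop:pShearerKFuzzZ}: for $p\ge\PShearerKZ$ it is well defined, positive, strictly decreasing and converges to $\TheXi$, hence $\beta_n\ge\TheXi$ for every $n$. Since the marginal of $\ShearerMeasure{\FuzzKZ}{p}$ on $\IntegerSet{n}$ is $\ShearerMeasure{\FuzzKL{n}}{p}$, one reads off $\ShearerMeasure{\FuzzKZ}{p}(Y_n=1\mid Y_{\IntegerSet{n-1}}=\vec1)=\beta_n\ge\TheXi$ and $\ShearerCriticalFunction{\FuzzKL{n}}(p)=b_n=\prod_{i=1}^n\beta_i\ge\TheXi^n$. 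For \eqref{eq:shearerKFuzzZCriticalFunctionMajoration}, given $\varepsilon>0$ choose $N$ with $\beta_i\le(1+\varepsilon)\TheXi$ for all $i\ge N$ and put $C:=b_{N-1}\,[(1+\varepsilon)\TheXi]^{1-N}$; then $b_n=b_{N-1}\prod_{i=N}^n\beta_i\le C\,[(1+\varepsilon)\TheXi]^n$ for $n\ge N$.

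\medskip
For \eqref{eq:shearerKFuzzZOVOEPMinoration} I would work with the explicit model \ref{mod:shearerKFuzzZ}, taking $Z$ with $Z_n=1-(1-X_n)\prod_{i=1}^k X_{n-i}$ and $X$ an i.i.d.\ family of \NameBernoulli{} variables of parameter $\TheXi$, so $Z$ is $\ShearerMeasure{\FuzzKZ}{p}$-distributed. Three elementary observations drive a reduction. First, $\Set{Z_0=0}$ equals $\Set{X_{-k}=\dotsb=X_{-1}=1,\ X_0=0}$, an event of probability $q$ measurable with respect to $(X_{-k},\dotsc,X_0)$. Second, as $\ShearerMeasure{\FuzzKZ}{p}$ forbids neighbouring zeros, $\Set{Z_0=0}\subseteq\Set{Z_j=1:0<\Modulus{j}\le k}$. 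Third, writing $B_{\mathrm{far}}:=\Set{n\in B:\Modulus{n}\ge k+1}$, the event $\Set{Z_{B_{\mathrm{far}}}=\vec1}$ is measurable with respect to $(X_j)_{j\le-k-1}$ and $(X_j)_{j\ge1}$, hence independent of $\Set{Z_0=0}$. Together these give $\Proba(Z_0=0,Z_B=\vec1)=q\,\Proba(Z_{B_{\mathrm{far}}}=\vec1)$, whence
\begin{equation*}
	\ShearerMeasure{\FuzzKZ}{p}(Y_0=1\mid Y_B=\vec1)
	=1-\frac{q}{\Proba(Z_{B\setminus B_{\mathrm{far}}}=\vec1\mid Z_{B_{\mathrm{far}}}=\vec1)}\,.
\end{equation*}
Enlarging $B\setminus B_{\mathrm{far}}$ to $N:=\Set{j:d_B\le\Modulus{j}\le k}$ only lowers the denominator and leaves $d_B$, hence $g_B$, unchanged; and an elementary rearrangement gives $1-\ExplicitFK(g)=(1-\TheXi)/(1-g(1-\TheXi))$, i.e.\ $q/(1-\ExplicitFK(g))=\TheXi^k-g\,q$ (using $q=\TheXi^k(1-\TheXi)$). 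So everything reduces to the uniform estimate $\Proba(Z_N=\vec1\mid Z_F=\vec1)\ge\TheXi^k-g\,q$, valid for $N=\Set{j:k+1-g\le\Modulus{j}\le k}$ with $1\le g\le k$ and every finite $F\subseteq\Set{j:\Modulus{j}\ge k+1}$; the case $d_B\ge k+1$ (so $g_B=0$, $N=\emptyset$) is immediate, the conditioning being vacuous and $\ShearerMeasure{\FuzzKZ}{p}(Y_0=1\mid Y_B=\vec1)=p\ge\TheXi=\ExplicitFK(0)$.

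\medskip
To prove this estimate, note that conditioning on $\Set{Z_F=\vec1}$ keeps $(X_{-k},\dotsc,X_0)$ i.i.d.\ \NameBernoulli{} of parameter $\TheXi$, independent of the two mutually independent conditioned blocks $(X_j)_{j\le-k-1}$ and $(X_j)_{j\ge1}$. Split $N=N^-\uplus N^+$ into its negative and positive halves. If $2d_B>k$, the events $\Set{Z_{N^-}=\vec1}$ and $\Set{Z_{N^+}=\vec1}$ are measurable with respect to disjoint families of $X$-coordinates, so the conditional probability factorises as $\Proba(Z_{N^+}=\vec1\mid Z_{F^+}=\vec1)\cdot\Proba(Z_{N^-}=\vec1\mid Z_{F^-}=\vec1)$; each factor is a conditioning of $\ShearerMeasure{\FuzzKZ}{p}$ along a half-line by events lying strictly on one side, which by the chain rule and a one-sided escape bound (itself a short induction built on $\beta_n\ge\TheXi$) is at least $\TheXi^g$, so the product is $\ge\TheXi^{2g}\ge\TheXi^k-g\,q$, the last inequality being \NameBernoulli{}'s inequality together with $g\le k$. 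The case $2d_B\le k$ is the crux: now $N^-$ and $N^+$ share coordinates in the middle block, the factorisation breaks down, and the relevant correlation is in fact negative — so $\TheXi^{2g}$ is no longer a valid lower bound and the estimate $\TheXi^k-g\,q$ is tight. I would settle this case by induction (on $g$, or on $\Cardinality{N}$) via the fundamental identity \eqref{eq:criticalShearerFunctionFundamentalIdentity} applied to $\ShearerCriticalFunction{\Graph{N\cup F}}$, peeling off the innermost vertices of $N$ and recognising the resulting terms as smaller instances of the same type, the remainder being an elementary inequality among $\beta$-ratios. Closing this induction with the exact constant in the overlapping regime $2d_B\le k$ is the step I expect to be the main obstacle.
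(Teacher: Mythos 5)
Your treatment of \eqref{eq:shearerKFuzzZEscapingOVOEPMinoration} and \eqref{eq:shearerKFuzzZCriticalFunctionMajoration} via the sequence $(\beta_n)$ from the proof of proposition \ref{prop:pShearerKFuzzZ} is correct and is exactly the paper's argument.

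For \eqref{eq:shearerKFuzzZOVOEPMinoration} you take a genuinely different route, via the explicit $(k+1)$-factor representation of model \ref{mod:shearerKFuzzZ}. The reduction $\ShearerMeasure{\FuzzKZ}{p}(Y_0=1\mid Y_B=\vec1)=1-q/\Proba(Z_{B\setminus B_{\mathrm{far}}}=\vec1\mid Z_{B_{\mathrm{far}}}=\vec1)$ is correct (the inclusion $\Set{Z_0=0}\subseteq\Set{Z_j=1:0<\Modulus{j}\le k}$, the measurability of $\Set{Z_{B_{\mathrm{far}}}=\vec1}$ in $\sigma((X_j)_{j\le-k-1},(X_j)_{j\ge1})$, and the independence from $\Set{Z_0=0}$ all check out), as is the enlargement to $N$ and the algebraic identity $q/(1-\ExplicitFK(g))=\TheXi^k-gq$. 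The factorisation in the non-overlapping regime $2d_B>k$ is sound, the one-sided escape bound is indeed a short induction on $\beta_n\ge\TheXi$, and the application of \NameBernoulli{}'s inequality with $g\le k$ closes that case cleanly. But you have a genuine gap: you explicitly leave the overlapping regime $2d_B\le k$ unproven, flagging it as ``the main obstacle,'' and this is precisely the regime where the estimate is hardest. Observing that the correlation is negative tells you only that the product of marginals is not a lower bound; it does not by itself tell you whether $\TheXi^{2g}$ fails, nor does it supply a replacement argument. As it stands, the proposal proves only a strict subset of \eqref{eq:shearerKFuzzZOVOEPMinoration}.

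The paper sidesteps the $(k+1)$-factor structure entirely. It works directly with the fundamental identity \eqref{eq:criticalShearerFunctionFundamentalIdentity}, writing $1-\ShearerMeasure{\FuzzKZ}{p}(Y_0=1\mid Y_B=\vec1)$ as $q$ divided by a telescoping product of conditional probabilities obtained by adding back the vertices of $B\cap\Neighbours{0}$ one at a time, and bounds each factor below by an appropriate value of $\ExplicitFK$ via the induction hypothesis on $\Cardinality{B}$. Because $\ExplicitFK$ is decreasing and bounded by $1$, one can replace the set of actually present near-$0$ vertices by the full block $\Set{d_B,\dotsc,k}$ (and $\Set{-k,\dotsc,-d_B}$), after which the product of $\ExplicitFK$-values telescopes exactly to $(k+1-d_B)\TheXi-(k-d_B)$, giving the claimed $\ExplicitFK(g_B)$ with no case split on overlap. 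That uniform inductive scheme is the piece your proposal is missing; if you want to stay with the explicit $(k+1)$-factor model, you would still have to run an induction of essentially that shape for $2d_B\le k$, at which point the factor representation buys you nothing beyond what the fundamental identity already gives.

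One more small remark: your reduction to $N\cup F$ is valid but does not genuinely simplify the statement — the claim $\Proba(Z_N=\vec1\mid Z_F=\vec1)\ge\TheXi^k-gq$ is equivalent (via the same algebra you already did) to \eqref{eq:shearerKFuzzZOVOEPMinoration} for the particular set $B=N\cup F$, so the inductive work is entirely undischarged.
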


\begin{Rem}
The minimality of \NameShearersMeasure{} \eqref{eq:shearerMinimality} implies that these lower bounds also hold for every $k$-independent \NameBernoulli{} random field on $\IntNum$ and $\NatNum$ with marginal parameter $p\ge\PShearerKZ$ respectively.
\end{Rem}

\begin{NoteToSelf}
PM classifies this as a kind of large deviation result. Equations \eqref{eq:shearerKFuzzZEstimates} imply that
\begin{equation*}
	\lim_{n\to\infty}
		\frac{\log\ShearerCriticalFunction{\FuzzKL{n}}(p)}{n}
	= \log\TheXi\,.
\end{equation*}
\end{NoteToSelf}

\begin{proof}
Fix $p\ge\PShearerKZ$. In the proof of proposition \ref{prop:pShearerKFuzzZ} we see that $(\beta_n)_{n\in\NatNum}$ is a strictly monotone falling sequence with $\beta_n\xrightarrow[n\to\infty]{}\beta\ge\frac{k}{k+1}$. As $\beta$ fulfils $q=\ExplicitHK(\beta)$ we have $\beta=\TheXi$. Hence $\beta_n\ge\TheXi$, yielding \eqref{eq:shearerKFuzzZEscapingOVOEPMinoration}. The monotonicity of $(\beta_n)_{n\in\NatNum}$ implies that
\begin{equation*}
	\ForAll \varepsilon>0: \Exists N\in\NatNum: \ForAll n\ge N:\qquad
	\beta_n\le (1+\varepsilon)\beta = (1+\varepsilon)\TheXi\,.
\end{equation*}
Hence for $n\ge N$ we have
\begin{equation*}
	\ShearerCriticalFunction{\FuzzKL{n}}(p)
	=\prod_{i=1}^n \beta_i
	\le\prod_{i=N+1}^n \beta_i
	\le (1+\varepsilon)^{n-N}\TheXi^{n-N}
	= \frac{1}{[(1+\varepsilon)\TheXi]^N} [(1+\varepsilon)\TheXi]^n\,.
\end{equation*}
This proves \eqref{eq:shearerKFuzzZCriticalFunctionMajoration} upon setting $C(\varepsilon):=[(1+\varepsilon)\TheXi]^{-N}$.\\

For \eqref{eq:shearerKFuzzZOVOEPMinoration} we differentiate according to the shape of $B$. If $d_B>k$, then $k$-independence implies that $\ShearerMeasure{\FuzzKZ}{p}(Y_0=1|Y_B=\vec{1})=p\ge\TheXi=\ExplicitFK(0)$.\\

If $d_B\le k$ let $B_{\pm}:=B\cap\IntNum_{\pm}$ and $d_{B_\pm}:=\inf\Set{\Modulus{n}:n\in B_\pm}$. Thus $d_B=d_{B_-}\land d_{B_+}$. In the first case $d_{B_-}>k$ and $d_{B_+}\le k$. Let $\Set{b_1,\dotsc,b_m}:=B_{+}\cap\IntegerSet{k}$ with $b_1<\dotso<b_m$.
 Hence
\begin{align*}
	&\FirstAlign1-\ShearerMeasure{\FuzzKZ}{p}(Y_0=1|Y_B=\vec{1})\\
	&=1-\ShearerMeasure{\FuzzKZ}{p}(Y_0=1|Y_{B_{+}}=\vec{1})
	&\text{by $k$-independence}\\
	&= \frac{q}{\prod_{i=1}^m \ShearerMeasure{\FuzzKZ}{p}(Y_{b_i}=1|Y_{B_{+}\setminus\Set{b_1,\dotsc,b_{i-1}}}=\vec{1})}
	&\text{fundamental identity \eqref{eq:criticalShearerFunctionFundamentalIdentity}}\\
	&\le \frac{q}{\TheXi^m}
	&\text{by induction over $\Cardinality{B_{+}}$}\\
	&\le (1-\TheXi)\TheXi^{k-m}
	&\text{as }q=(1-\TheXi)\TheXi^k\\
	&\le 1-\TheXi
	&\text{as }m\le k\,.
\end{align*}
This also holds in the symmetric case with $d_{B_-}\le k$ and $d_{B_+}>k$.\\

The final case is $d_{B_+}\le k$ and $d_{B_-}\le k$. Assume without loss of generality that $d_B=d_{B_-}\le d_{B_+}$ and let $\Set{a_n,\dotsc,a_1}:=B_{-}\cap\Set{-k,\dotsc,-1}$ with $a_n<\dotso<a_1$. Applying the fundamental identity \eqref{eq:criticalShearerFunctionFundamentalIdentity} and induction over $\Cardinality{B}$ we get
\begin{align*}
	&\FirstAlign1-\ShearerMeasure{\FuzzKZ}{p}(Y_0=1|Y_B=\vec{1})\\
	&= \frac{q}{
		\prod_{j=1}^n\ShearerMeasure{\FuzzKZ}{p}(Y_{a_j}=1|
			Y_{B_{-}\setminus\Set{a_1,\dotsc,a_{j-1}}}=\vec{1},
			Y_{B_{+}}=\vec{1})}\\
	&\quad\times\frac{1}{
		\prod_{i=1}^m \ShearerMeasure{\FuzzKZ}{p}(Y_{b_i}=1|
			Y_{B_{-}\setminus\Set{a_1,\dotsc,a_n}}=\vec{1},
			Y_{B_{+}\setminus\Set{b_1,\dotsc,b_{i-1}}}=\vec{1})}\\
	&\le\frac{q}{
		\prod_{j=1}^n \ExplicitFK(k+a_j)
		\prod_{i=1}^m \ExplicitFK(0)}\\
	&\le\frac{q}{
		\prod_{j=d_B}^k \ExplicitFK(k-j)
		\prod_{i=1}^k \ExplicitFK(0)}\\
	&=\frac{(1-\TheXi)\TheXi^k}{
		\left[\prod_{j=d_B}^k \frac%
			{(k+1-j)\TheXi-(k-j)}
			{(k-j)\TheXi-(k-1-j)}
		\right]
		\TheXi^k}\\
	&=\frac{1-\TheXi}
		{(k+1-d_B)\TheXi-(k-d_B)}\,.
\end{align*}
It follows that
\begin{multline*}
  \ShearerMeasure{\FuzzKZ}{p}(Y_0=1|Y_B=\vec{1})
  \ge 1-\frac{1-\TheXi}
		{(k+1-d_B)\TheXi-(k-d_B)}\\
  = \frac{(k+2-d_B)\TheXi-(k+1-d_B)}
		{(k+1-d_B)\TheXi-(k-d_B)}
  =\ExplicitFK(k+1-d_B)\,.
\end{multline*}
\end{proof}
\section{Proofs}\label{sec:proofs}
\subsection{Proof outline of theorems \ref{thm:valueOfRootedCriticalValues} and \ref{thm:diameterBoundedThreshold}}
\label{sec:mainProofOutlines}
\begin{proof}[Proof of theorem \ref{thm:valueOfRootedCriticalValues}]
We start with some obvious relations between the rooted percolation classes and their critical values, based on the restrictions imposed by $k$ and $s$. For all $k,k',s,s'\in\NatNumZero$:
\begin{equation}\label{eq:rootedPercolationClassRelations}
	\text{if $k\le k'$ and $s\le s'$ then }
	\begin{cases}
		\PercolationClassRooted{p}{o}{k}{s}{V}
			\subseteq\PercolationClassRooted{p}{o}{k'}{s'}{V}\\
		\PMaxV{k,s}\le\PMaxV{k',s'}\\
		\PMinV{k',s'}\le\PMinV{k,s}
	\end{cases}
	\text{ holds.}
\end{equation}

The first part is the proof for $\PMaxV{k,s}$ in sections \ref{sec:upperBoundPMax} and \ref{sec:lowerBoundPMax}. To get an upper bound on $\PMaxV{k,s}$ we need to show that every $k,s$-independent percolation percolates for $p$ close enough to $1$. Our approach uses a classical second moment argument, recalled in lemma \ref{lem:secondMomentMethod}. We relate it to $\BranchingNumberT$ in proposition \ref{prop:secondMomentMethodTreeAdaption}, with the core ingredient being a sufficient condition for percolation in terms of an exponential bound on the percolation kernel, defined in \ref{eq:percolationKernel}. For $k,s$-independent percolation proposition \ref{prop:ksIndependentKernelBound} reduces this to the problem of bounding the conditional probabilities of extending open geodesic downrays by the right exponential term. Finally proposition \ref{prop:secondMomentBound} uses the minimality of \NameShearersMeasure{} from lemma \ref{lem:shearerMinimality} and detailed estimates about its structure on $\FuzzKZ$ in proposition \ref{prop:shearerKFuzzZEstimates} to uniformly guarantee the right exponential term and arrive at \eqref{eq:secondMomentBound}:
\begin{equation*}
	\ForAll k,s\in\NatNumZero:\qquad
	\PMaxV{k,s}\le\begin{cases}
		\ExplicitGK(\BranchingNumberT)
		&\text{if }\BranchingNumberT\le\frac{k+1}{k}\\
		\PShearerKZ
		&\text{if }\BranchingNumberT\ge\frac{k+1}{k}\,.
	\end{cases}
\end{equation*}
For the lower bound on $\PMaxV{k,s}$ it suffices to exhibit $k,0$-independent percolation models that do not percolate. We describe two such models, the canonical model \ref{mod:canonical} and the cutup model \ref{mod:cutup}, both constructed from \NameShearersMeasure{}. More precisely, in section \ref{sec:treeFission} we describe a general procedure, called tree-fission, to create a $k,0$-independent percolation with identical distributions along all downrays from a given $k$-independent \NameBernoulli{} random field over $\NatNum$. When applied to \NameShearersMeasure{} on $\FuzzKN$ and a derivative of $\FuzzKL{n}$ it yields the canonical model \ref{mod:canonical} and the cutup model \ref{mod:cutup} respectively. We then use the first moment method, recalled in lemma \ref{lem:firstMomentMethod}, to establish their nonpercolation, leading to the following results from \eqref{eq:canonicalModelBound} and  \eqref{eq:cutupModelBound}:
\begin{equation*}
	\PMaxV{k,0}\ge\ExplicitGK(\BranchingNumberT)\quad
	\text{if }\BranchingNumberT\in\left[1,\frac{k+1}{k}\right[
	\quad\text{ and }\quad
	\PMaxV{k,0}\ge\PShearerKZ\,.
\end{equation*}
Conclude by applying the inequality from \eqref{eq:rootedPercolationClassRelations}.\\

The second part is the proof for $\PMinV{k,s}$ in section \ref{sec:determiningPMin}. Here the argumentation is the reverse of the one for $\PMaxV{k,s}$. To get a lower bound on $\PMinV{k,s}$ we need to show that every $k,s$-independent percolation does not percolate for $p$ close enough to $0$. We achieve this by a first moment argument in proposition \ref{prop:firstMomentBound}, using solely $k$-independence along downrays. It culminates in \eqref{eq:firstMomentBound}:
\begin{equation*}
	\ForAll k,s\in\NatNumZero:\qquad
	\PMinV{k,s}\ge\frac{1}{\BranchingNumberT^{k+1}}\,.
\end{equation*}
For the upper bound on $\PMinV{k,s}$ we differentiate between $k=0$ and $k\ge 1$. In the case $k=0$ we already have a matching upper bound in the upper bound for $\PMaxV{0,s}$ in \eqref{eq:secondMomentBound}. For $k\ge 1$ we describe a percolating $k,0$-independent percolation model, called the minimal model \ref{mod:minimal}. It is constructed by the tree-fission procedure from section \ref{sec:treeFission}. In proposition \ref{prop:minimalModelFacts} we show that it percolates by bounding its percolation kernel with the help of proposition \ref{prop:ksIndependentKernelBound} and applying the second moment method adaption from proposition \ref{prop:secondMomentMethodTreeAdaption}, leading to \eqref{eq:minimalModelBound}:
\begin{equation*}
	\ForAll k\ge 1:\qquad
	\PMinV{k,0}\le\frac{1}{\BranchingNumberT^{k+1}}\,.
\end{equation*}
Conclude by applying the inequality from \eqref{eq:rootedPercolationClassRelations}, using the upper bound for $\PMaxV{0,s}$ in the case of $k=0$.
\end{proof}

\begin{proof}[Proof of theorem \ref{thm:diameterBoundedThreshold}]
By \eqref{eq:pShearerKFuzzZ} for every $\varepsilon>0$ there exists a $N\in\NatNum$ such that $\PShearerKZ>\PShearerKL{N}>\PShearerKZ-\varepsilon$. Then proposition \ref{prop:cutupModelFacts} asserts that the cutup percolation $\PercolationCutNK$ (model \ref{mod:cutup}) is diameter bounded with $D=4N-4$.\\

On the other hand, let $p\ge\PShearerKZ$ and $Z:=\Set{Z_v}_{v\in V}$ be in $\PercolationClassRooted{p}{o}{k}{s}{V}$. We have
\begin{equation*}
  \ForAll n\in\NatNum, v\in\TreeLevel{\Tree}{n}:\quad
  \Proba(Z_{\TreePath{o}{v}}= \vec{1})
  \ge \ShearerMeasure{\FuzzKZ}{p}(Y_{\IntegerSet{n}}=\vec{1})
  \ge \TheXi^n > 0\,,
\end{equation*}
where $Y$ is $\ShearerMeasure{\FuzzKZ}{p}$-distributed, we use the minimality of \NameShearersMeasure{} \eqref{eq:shearerMinimalityCriticalFunction} and the minoration from \eqref{eq:shearerKFuzzZEscapingOVOEPMinoration}, with $\TheXi>0$ from \ref{eq:theXi}. This implies that $Z$ is not diameter bounded.
\end{proof}
\subsection{General tools for percolation on trees}
\label{sec:generalTreePercolationTools}
In this section we list some general tools for percolations on trees which allow us to shorten the following proofs. The following \emph{extension} of \emph{\NameKolmogorov{}'s zero-one law} \cite[theorem 36.2]{Billingsley_probability} is well known. In particular it encompasses $k$-independent rvs on a graph $G$, as they have the $k$-fuzz of $G$ as their dependency graph.
\begin{Lem}\label{lem:kolmogorovZeroOneLocallyDependent}
Let $G=(V,E)$ be a locally finite, infinite graph. Let $X:=\Set{X_v}_{v\in V}$ be a random field with dependency graph $G$. Then the tail $\sigma$-algebra of $X$ is trivial.
\end{Lem}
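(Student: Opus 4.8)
The plan is to reduce the statement to the classical Kolmogorov zero-one law by exploiting the finite range of dependence encoded in the dependency graph $G$. First I would fix an exhaustion $V_1\subseteq V_2\subseteq\dotsb$ of $V$ by finite sets with $\bigcup_n V_n=V$; local finiteness of $G$ guarantees that each finite $V_n$ has a finite \emph{closed} neighbourhood $\NeighboursIncluded{V_n}$ (all vertices at graph distance $\le 1$ in $G$). The point is that for any finite $W\subseteq V$, the field $\Set{X_v}_{v\in W}$ is independent of $\Set{X_v}_{v\in V\setminus\NeighboursIncluded{W}}$, directly from the defining property \eqref{eq:strongDependencyGraph} of a dependency graph, since these two vertex sets are at graph distance $>1$ in $G$.

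Next I would recall that the tail $\sigma$-algebra is $\mathcal{T}:=\bigcap_n \sigma(X_v:v\in V\setminus V_n)$. Let $A\in\mathcal{T}$. I want to show $\Proba(A)\in\Set{0,1}$, and the standard route is to prove that $A$ is independent of itself, i.e.\ $\Proba(A\cap A)=\Proba(A)^2$. To do so I would show $A$ is independent of $\sigma(X_v:v\in V_m)$ for every $m$. Fix $m$; since $G$ is locally finite, $\NeighboursIncluded{V_m}$ is finite, hence contained in some $V_n$. Because $A$ is a tail event, $A\in\sigma(X_v:v\in V\setminus V_n)\subseteq\sigma(X_v:v\in V\setminus\NeighboursIncluded{V_m})$. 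By the dependency-graph property just quoted, $\sigma(X_v:v\in V_m)$ is independent of $\sigma(X_v:v\in V\setminus\NeighboursIncluded{V_m})$, and therefore independent of $A$. Letting $m\to\infty$ and using that $\bigcup_m\sigma(X_v:v\in V_m)$ is a $\pi$-system generating $\sigma(X_v:v\in V)\supseteq\mathcal{T}$, the $\pi$-$\lambda$ theorem \cite[theorem 3.3]{Billingsley_probability} gives that $A$ is independent of the whole $\sigma$-algebra $\sigma(X_v:v\in V)$, hence independent of itself, so $\Proba(A)=\Proba(A)^2\in\Set{0,1}$.

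The only mild subtlety — and the step I would be most careful about — is the passage from ``$A$ independent of $\sigma(X_v:v\in V_m)$ for each $m$'' to ``$A$ independent of $\sigma(X_v:v\in V)$'': one must note that $\bigcup_m\sigma(X_v:v\in V_m)$ is an increasing union of $\sigma$-algebras, hence a $\pi$-system, whose generated $\sigma$-algebra is all of $\sigma(X_v:v\in V)$, and then invoke the $\pi$-$\lambda$ lemma to upgrade independence from the $\pi$-system to the $\sigma$-algebra. Everything else is a direct transcription of the usual proof of Kolmogorov's zero-one law, with ``$V_n$'' replaced by ``$\NeighboursIncluded{V_n}$'' at the one place where independence of a finite block from the complementary tail is used; local finiteness of $G$ is exactly what makes this replacement harmless.
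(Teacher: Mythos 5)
Your proof is correct, and it takes a genuinely different route from the paper's. The paper proves the lemma via a martingale argument: it sets $Z_n:=\Expect[\Indicator{B}\mid\Algebra_{V_n}]$, verifies that $(Z_n)_n$ is a martingale, appeals (implicitly) to L\'evy's upward theorem so that $Z_n\to\Indicator{B}$ a.s., and combines this with the (also implicit) observation that $Z_n$ is a.s.\ constant to conclude $\Proba(B)=\Indicator{B}$ a.s., hence $\Proba(B)\in\Set{0,1}$. You instead run the textbook $\pi$-$\lambda$ version of the zero-one law: show $A$ is independent of $\sigma(X_{V_m})$ for every $m$, observe $\bigcup_m\sigma(X_{V_m})$ is a $\pi$-system generating $\sigma(X_V)\supseteq\mathcal{T}$, and upgrade to independence of $A$ from $\sigma(X_V)$, hence from itself.

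The two arguments are of comparable length and both are standard; the substance is identical in one crucial respect. Both hinge on exactly one step where the dependency graph and local finiteness are used: for a fixed finite block $V_m$, local finiteness gives that $\NeighboursIncluded{V_m}$ is finite and hence contained in some $V_n$, so the tail event lies in $\sigma(X_{V\setminus\NeighboursIncluded{V_m}})$, which \eqref{eq:strongDependencyGraph} makes independent of $\sigma(X_{V_m})$. You spell this step out carefully; the paper's written proof leaves it implicit (the ``a.s.\ constant martingale'' assertion is where it is being used, but the insertion of the closed neighbourhood is never displayed). In this sense your version is the more transparent of the two, and it is arguably preferable for a reader who wants to see precisely where ``locally finite'' and ``dependency graph'' enter. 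The martingale route buys nothing extra here since the field is not assumed integrable beyond being $\Set{0,1}$-valued; the $\pi$-$\lambda$ route is just as short and avoids invoking L\'evy's convergence theorem.

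One small stylistic point: you could shave a line by noting you only need $A$ independent of $\sigma(X_{V_m})$ for $m$ in a cofinal subsequence, but as written the argument is complete and correct.
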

\begin{proof}
Let $(V_n)_{n\in\NatNum}$ be an exhausting, strictly monotone growing sequence of finite subsets of $V$. For $W\subseteq V$ let $\Algebra_W:=\sigma(X_W)$ and define the tail $\sigma$-algebra $\Algebra_\infty:=\bigcap_{n=1}^\infty \Algebra_{V_n^c}$. For an event $B\in\Algebra_\infty$ set $Z_n:=\Expect[\Indicator{B}|\Algebra_{V_n}]=\Indicator{B}$. Then we have a \AlmostSureLy{} constant martingale with $\lim_{n\to\infty} Z_n=\Indicator{B}$:
\begin{equation*}
  \Expect[Z_{n+1}|\Algebra_{V_n}]
= \Expect[\Expect[\Indicator{B}|\Algebra_{V_{n+1}}]|\Algebra_{V_n}]
= \Expect[\Expect[\Indicator{B}|\Algebra_{V_n}]|\Algebra_{V_{n+1}}]
= \Expect[\Indicator{B}|\Algebra_{V_n}]
= Z_n\,.
\end{equation*}
Hence $\Proba(B)^2=\Expect[\Indicator{B}\Proba(B)]=\Expect[\Indicator{B}^2]=\Proba(B)$ and $\Proba(B)\in\Set{0,1}$.
\end{proof}
Next we introduce some notation for rooted percolation on $\Tree$:
\begin{Not}\label{not:openTo}
In the context of rooted percolation and for $v\in V$ we write
\begin{subequations}\label{eq:openTo}
\begin{align}
	\label{eq:openToCutset}
	O_v^\Pi
	&:= \Set{v\ConnectedTo \Pi\cap\Vertices{\SubtreeRootedAt{\Tree}{v}}}
	&\Pi\in\CutsetsOf{o}\\
	\label{eq:openToBoundary}
	O_v
	&:= \Set{v\ConnectedTo\infty}
	= \Set{v\ConnectedTo\TreeBoundary{\SubtreeRootedAt{\Tree}{v}}}\,,
\end{align}
\end{subequations}
where those events mean ``there is an \emph{open downpath} from $w$ to the cutset $\Pi$'' and ``there is an \emph{open downray} starting at $v$''.
\end{Not}
The following lemma allows us to concentrate exclusively on rooted percolation (see \cite{Temmel_kindependent} for a proof):
\begin{Lem}\label{lem:percolationRootedVsUnrooted}
Let $\Percolation\in\PercolationClass{p}{k}{V}$, for finite $k$. Then
\begin{subequations}
\begin{align}
	(\Exists v\in V: \Proba(O_v) > 0) \Iff \Proba(\Percolation\text{ percolates on $\Tree$}) = 1\,,
		\label{eq:percolationRootedVsUnrootedPositive}\\
	(\ForAll v\in V: \Proba(O_v) = 0) \Iff \Proba(\Percolation\text{ percolates on $\Tree$}) = 0\,.
		\label{eq:percolationRootedVsUnrootedZero}
\end{align}
\end{subequations}
\end{Lem}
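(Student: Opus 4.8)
The plan is to prove the two equivalences in Lemma~\ref{lem:percolationRootedVsUnrooted} by combining the zero-one law from Lemma~\ref{lem:kolmogorovZeroOneLocallyDependent} with the observation that an infinite open cluster, by König's lemma, must contain an infinite open ray, and that any such ray becomes an open downray once we walk far enough along it. Since \eqref{eq:percolationRootedVsUnrootedPositive} and \eqref{eq:percolationRootedVsUnrootedZero} are contrapositives of one another modulo the zero-one dichotomy (percolation is a tail event with respect to the dependency graph $\Tree$, so $\Proba(\Percolation\text{ percolates})\in\Set{0,1}$ by Lemma~\ref{lem:kolmogorovZeroOneLocallyDependent}), it suffices to prove one direction of each and invoke that dichotomy; concretely I would prove the two forward implications ``$\Rightarrow$'' in \eqref{eq:percolationRootedVsUnrootedPositive} and \eqref{eq:percolationRootedVsUnrootedZero}, since the backward ones are then immediate.

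First I would record the tail-triviality: percolation (existence of an infinite open cluster) is measurable with respect to $\bigcap_n \Algebra_{V_n^c}$ for an exhaustion $(V_n)$ of $V$, because removing any finite set of vertices changes only finitely many clusters and leaves the existence of an infinite one unaffected; hence $\Proba(\Percolation\text{ percolates})\in\Set{0,1}$. Next, the key structural step: if $\Proba(\Percolation\text{ percolates})>0$ then with positive probability there is an infinite open connected subgraph of the locally finite tree $\Tree$, which (being an infinite locally finite connected subgraph of a tree) contains an infinite simple path; orienting $\Tree$ from the root, any infinite simple path eventually stops ascending and, from the highest vertex $v$ it visits onward, is an open downray starting at $v$. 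There are only countably many vertices, so $\Proba(\bigcup_{v\in V} O_v)>0$ forces $\Proba(O_v)>0$ for some $v$, giving the forward direction of \eqref{eq:percolationRootedVsUnrootedPositive}. Conversely, if some $\Proba(O_v)>0$, then an open downray from $v$ is in particular an infinite open cluster, so $\Proba(\Percolation\text{ percolates})>0$, and tail-triviality upgrades this to $=1$; this also delivers ``$\Leftarrow$'' of \eqref{eq:percolationRootedVsUnrootedPositive}.

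For \eqref{eq:percolationRootedVsUnrootedZero}: if $\ForAll v: \Proba(O_v)=0$ then $\Proba(\bigcup_v O_v)=0$, and since (by the same König's-lemma argument) percolation implies $\bigcup_v O_v$ occurs, we get $\Proba(\Percolation\text{ percolates})=0$; the reverse implication is trivial because $O_v\subseteq\Set{\Percolation\text{ percolates}}$ for every $v$. Note $k$-independence (finiteness of $k$) is used only to guarantee $\Tree$, equivalently its $k$-fuzz, is a dependency graph so that Lemma~\ref{lem:kolmogorovZeroOneLocallyDependent} applies.

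The main obstacle, such as it is, is the passage from ``infinite open cluster'' to ``open downray'': one must argue carefully that an infinite locally finite connected graph contains an infinite simple path (König's lemma, using local finiteness of $\Tree$), and that after deleting the finitely many edges used to first reach the apex vertex $v$ of that path the remaining tail is a genuine downray avoiding the ancestors of $v$. Everything else is bookkeeping with the zero-one law; since the paper defers to \cite{Temmel_kindependent} for the full details, the proof sketch above is the natural skeleton to present.
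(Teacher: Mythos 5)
Your argument is correct, and the paper itself gives no proof here — it defers to \cite{Temmel_kindependent} — so there is no in-text proof to compare against. The skeleton you use is the natural one: tail-triviality of the percolation event via Lemma~\ref{lem:kolmogorovZeroOneLocallyDependent} (note the $k$-fuzz of the locally finite $\Tree$ remains locally finite precisely because $k$ is finite, which is where the hypothesis enters), together with the set identity $\Set{\Percolation\text{ percolates}}=\bigcup_{v\in V}O_v$. For the latter, your ``apex'' step can be made airtight with one line: in a tree a simple path, once it steps from a vertex to a child, can never step to a parent again without immediately backtracking, so the ray ascends for finitely many steps and then descends monotonically, and its tail from the level-minimizing vertex $v$ is a genuine open downray, hence $O_v$ occurs. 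One cosmetic slip: in the body you call the implication $\Proba(\Percolation\text{ percolates})>0\Rightarrow\Exists v:\Proba(O_v)>0$ the ``forward direction'' of \eqref{eq:percolationRootedVsUnrootedPositive}, but as stated that is its backward direction; since you in fact prove all four implications anyway, this mislabelling has no mathematical consequence.
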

In the case $k=s=0$ we can change the $\Exists$ to $\ForAll$ in \eqref{eq:percolationRootedVsUnrootedPositive}, which is needed in the proof of proposition \ref{prop:minimalModelFacts}. Finally the obvious relationship between rooted percolation reaching a cutset $\Pi\in\CutsetsOf{o}$ or the boundary $\TreeBoundary{\Tree}$ from $o$ is:
\begin{equation}\label{eq:openToCutsetIntersectionEqualsBoundary}
	\ForAll w\in V:\quad
	O_w = \bigcap_{\Pi\in\CutsetsOf{o}} O_w^\Pi\,.
\end{equation}
This holds already for the intersection over an \emph{exhaustive sequence of cutsets} $\Set{\Pi_m}_{m\in\NatNum}$, i.e. $\ForAll v\in V: \Exists m_v\in\NatNum: \Exists w\in\Pi_{m_v}: v$ is an ancestor of $w$. A central tool is the following two moment methods:
\begin{Lem}[First moment method {\cite[section 5.2]{LyonsPeres_prob_tree_network}}]
\label{lem:firstMomentMethod}
We have
\begin{equation}\label{eq:firstMomentMethod}
	\Proba(O_o)
= \Proba(o\ConnectedTo\infty)
\le\inf_{\Pi\in\CutsetsOf{o}} \sum_{v\in\Pi} \Proba(o\ConnectedTo v)\,.
\end{equation}
\end{Lem}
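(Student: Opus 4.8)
The plan is to prove the bound by a straightforward union bound, with the geometric input being that every minimal vertex cutset must be crossed by any open downray from the root. I would not expect any genuine obstacle here; the only care needed is to match the event $O_o$ against the events $\Set{o\ConnectedTo v}$ via the cutset events $O_o^\Pi$ already introduced in Notation \ref{not:openTo}.

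First I would fix an arbitrary $\Pi\in\CutsetsOf{o}$. Since $\Pi$ is a minimal vertex cutset of $o$, all its elements are descendants of $o$, so $\Pi\cap\Vertices{\SubtreeRootedAt{\Tree}{o}}=\Pi$ and hence $O_o^\Pi=\Set{o\ConnectedTo\Pi}$. Moreover $\Pi$ separates the component of $o$ from infinity, so any open downray starting at $o$ must pass through some vertex of $\Pi$; this is exactly the inclusion $O_o\subseteq O_o^\Pi$, which is also immediate from \eqref{eq:openToCutsetIntersectionEqualsBoundary} since $O_o=\bigcap_{\Pi'\in\CutsetsOf{o}}O_o^{\Pi'}$.

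Next I would decompose $O_o^\Pi$ over the finitely many vertices of $\Pi$: reaching the cutset means reaching at least one of its vertices, i.e. $O_o^\Pi=\bigcup_{v\in\Pi}\Set{o\ConnectedTo v}$. Applying the union bound gives
\begin{equation*}
	\Proba(O_o)\le\Proba(O_o^\Pi)\le\sum_{v\in\Pi}\Proba(o\ConnectedTo v)\,.
\end{equation*}
Since $\Pi\in\CutsetsOf{o}$ was arbitrary, taking the infimum over all minimal vertex cutsets yields the claimed inequality. If one wants to be slightly more economical one can run the same argument along an exhaustive sequence of cutsets, but it is not needed here. The only ``work'' is the bookkeeping that $\Proba(O_o)=\Proba(o\ConnectedTo\infty)$ by definition \eqref{eq:openToBoundary}, which makes the statement a tautological reformulation plus one application of subadditivity; there is no real difficulty to flag.
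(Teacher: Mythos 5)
Your argument is correct and is exactly the standard first-moment argument that the cited reference (Lyons--Peres, section 5.2) gives; the paper itself states Lemma~\ref{lem:firstMomentMethod} without proof, deferring to that reference. The chain $O_o\subseteq O_o^\Pi=\bigcup_{v\in\Pi}\Set{o\ConnectedTo v}$ followed by the union bound and the infimum over $\Pi\in\CutsetsOf{o}$ is precisely the intended proof, so there is nothing to add.
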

\begin{Lem}[Weighted second moment method {\cite[section 5.3]{LyonsPeres_prob_tree_network}}]
\label{lem:secondMomentMethod}
\begin{subequations}
\begin{equation}\label{eq:secondMomentMethod}
	\Proba(O_o)
= \Proba(o\ConnectedTo\infty)
\ge \inf_{\Pi\in\CutsetsOf{o}} \sup_{\mu\in\ProbabilityMeasureSpace{\Pi}} \frac{1}{\Energy{\mu}}\,,
\end{equation}
where $\ProbabilityMeasureSpace{\Pi}$ is the set of probability measures on the vertex cutset $\Pi$ and the \emph{energy} $\Energy{\mu}$ of $\mu\in\ProbabilityMeasureSpace{\Pi}$ is determined by
\begin{equation}\label{eq:muEnergy}
	\Energy{\mu} = \sum_{v,w\in\Pi} \mu(v)\mu(w)\PercolationKernel(v,w)\,.
\end{equation}
and $\PercolationKernel$ is the symmetric \emph{percolation kernel}
\begin{equation}\label{eq:percolationKernel}
	\PercolationKernel:
		\qquad V^2 \to \RealNumPlus
		\qquad (v,w)\mapsto\PercolationKernel(v,w)
		:=\frac%
			{\Proba(o\ConnectedTo v,o\ConnectedTo w)}
			{\Proba(o\ConnectedTo v)\Proba(o\ConnectedTo w)}\,.
\end{equation}
\end{subequations}
\end{Lem}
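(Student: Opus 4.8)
The plan is to run the classical Paley--Zygmund (second moment) argument on each finite cutset and then pass to the limit along an exhausting sequence of cutsets. Fix a minimal vertex cutset $\Pi\in\CutsetsOf{o}$ and a probability measure $\mu\in\ProbabilityMeasureSpace{\Pi}$; vertices $v\in\Pi$ with $\Proba(o\ConnectedTo v)=0$ contribute nothing and may be dropped, so the percolation kernel is well defined on the rest. I would introduce the nonnegative test variable
\begin{equation*}
	X_{\Pi,\mu}:=\sum_{v\in\Pi}\mu(v)\,\frac{\Indicator{o\ConnectedTo v}}{\Proba(o\ConnectedTo v)}\,,
\end{equation*}
which satisfies $\Set{X_{\Pi,\mu}>0}\subseteq O_o^\Pi$, the event that an open path joins $o$ to $\Pi$.

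Next I would compute the first two moments: by linearity $\Expect[X_{\Pi,\mu}]=\sum_{v\in\Pi}\mu(v)=1$, and expanding the square and substituting the definition \eqref{eq:percolationKernel} of $\PercolationKernel$ gives $\Expect[X_{\Pi,\mu}^2]=\sum_{v,w\in\Pi}\mu(v)\mu(w)\PercolationKernel(v,w)=\Energy{\mu}$, which is finite as $\Pi$ is finite (if $\Energy{\mu}=\infty$ the asserted bound $1/\Energy{\mu}=0$ is vacuous). Cauchy--Schwarz applied to $\Expect[X_{\Pi,\mu}]=\Expect[X_{\Pi,\mu}\Indicator{X_{\Pi,\mu}>0}]$ yields $1\le\Energy{\mu}\,\Proba(X_{\Pi,\mu}>0)$, hence $\Proba(O_o^\Pi)\ge\Proba(X_{\Pi,\mu}>0)\ge1/\Energy{\mu}$. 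Since $\ProbabilityMeasureSpace{\Pi}$ is a compact simplex on which $\mu\mapsto\Energy{\mu}$ is continuous, taking the supremum over $\mu$ gives $\Proba(O_o^\Pi)\ge\sup_{\mu\in\ProbabilityMeasureSpace{\Pi}}1/\Energy{\mu}$ for every $\Pi\in\CutsetsOf{o}$.

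Finally I would upgrade this to the boundary event $O_o=\Set{o\ConnectedTo\infty}$ by choosing a \emph{nested} exhausting sequence of cutsets $(\Pi_m)_{m\in\NatNum}$, so that the events $O_o^{\Pi_m}$ decrease; by \eqref{eq:openToCutsetIntersectionEqualsBoundary} (which holds already along an exhausting sequence) their intersection is $O_o$, so continuity of probability gives
\begin{equation*}
	\Proba(O_o)=\lim_{m\to\infty}\Proba(O_o^{\Pi_m})\ge\inf_{m\in\NatNum}\sup_{\mu\in\ProbabilityMeasureSpace{\Pi_m}}\frac{1}{\Energy{\mu}}\ge\inf_{\Pi\in\CutsetsOf{o}}\sup_{\mu\in\ProbabilityMeasureSpace{\Pi}}\frac{1}{\Energy{\mu}}\,,
\end{equation*}
which is \eqref{eq:secondMomentMethod}.

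The moment identities and the Cauchy--Schwarz step are entirely routine; I expect the one delicate point to be this last limiting passage. For any \emph{single} cutset $\Pi$ one only has the inclusion $O_o\subseteq O_o^\Pi$, so a bound obtained from a fixed cutset points in the wrong direction; the lower bound on $\Proba(O_o)$ only materializes in the limit over an exhausting sequence, and one has to make sure such a sequence can be taken nested (straightforward for a locally finite tree). The degenerate cases — a cutset all of whose vertices have connection probability $0$, or a measure of infinite energy — are absorbed by the convention $1/\infty=0$.
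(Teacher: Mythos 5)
Your proof is correct and is essentially the standard Paley--Zygmund argument given in the cited reference (Lyons--Peres, section 5.3): the paper itself does not reprove this lemma but simply cites it, and your weighted test variable $X_{\Pi,\mu}$, the moment computation giving $\Expect[X_{\Pi,\mu}]=1$ and $\Expect[X_{\Pi,\mu}^2]=\Energy{\mu}$, the Cauchy--Schwarz step, and the passage to $O_o$ along a nested exhausting sequence of cutsets via \eqref{eq:openToCutsetIntersectionEqualsBoundary} reproduce exactly that argument. Your remark about the direction of the single-cutset bound correctly identifies the only point where care is needed, and the degenerate cases ($\Proba(o\ConnectedTo v)=0$ or infinite energy) are handled by the same conventions as in the reference.
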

\subsection{Upper bound on \texorpdfstring{$\PMaxV{k,s}$}{the maximal p}}
\label{sec:upperBoundPMax}
The task is to establish an upper bound on $\PMaxV{k,s}$. In other words, we want to guarantee percolation for high enough $p$. The first step in section \ref{sec:percolationKernelEstimates} is to use the second moment method to translate this problem into the search for a suitable exponential bound on the percolation kernel. Then we use $k,s$-independence to bound the percolation kernel in terms of a conditional probability along a single downray. Hence we can guarantee percolation as soon as we can bound this conditional probability from below in sufficient exponential terms. The percolation along a single downray is just a \NameBernoulli{} random field with parameter $p$ and dependency graph $\FuzzKN$. In the second step in section \ref{sec:uniformBoundByShearerMeasure} we apply the generic minimality of \NameShearersMeasure{} and a lower bound on $\ShearerMeasure{\FuzzKN}{p}$ to get such an exponential lower bound of parameter $\TheXi$. Finally we relate $\TheXi$ and $\BranchingNumberT$ and derive the upper bound.
\subsubsection{Percolation kernel estimates}
\label{sec:percolationKernelEstimates}
In proposition \ref{prop:secondMomentMethodTreeAdaption} we state a sufficient condition on the percolation kernel in order to percolate. This condition relates the second moment method to the branching number. In proposition \ref{prop:ksIndependentKernelBound} we bound the percolation kernel for $k,s$-independent percolation in terms of conditional probabilities along a single downray, hence providing a simpler means to derive the sufficient condition in subsequent steps.
\begin{Prop}\label{prop:secondMomentMethodTreeAdaption}
Let $\Percolation\in\PercolationClassRooted{?}{o}{k}{s}{V}$ and $\alpha<\BranchingNumberT$, $C\in\RealNumPlus$ such that $\ForAll v,w\in V$:
\begin{equation}\label{eq:secondMomentMethodTreeAdaption}
	\PercolationKernel(v,w)\le C \alpha^{\NodeLevel{v\Confluent w}}\,,
\end{equation}
then $\Percolation$ percolates.
\end{Prop}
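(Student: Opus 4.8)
The plan is to feed the hypothesis \eqref{eq:secondMomentMethodTreeAdaption} into the weighted second moment method of Lemma \ref{lem:secondMomentMethod} and to exploit the flow/cutset duality \eqref{eq:branchingNumber} defining $\BranchingNumberT$. Fix $\alpha<\BranchingNumberT$ and $C\in\RealNumPlus$ as in the statement. Since $\alpha<\BranchingNumberT$, the supremum in \eqref{eq:branchingNumber} is attained by strictly larger values, so there exists a nonzero $\alpha$-flow $f$ on $\Tree$; that is, $f:V\to\RealNumPlus$ with $f(v)=\sum_{w:\,\Parent{w}=v} f(w)\le\alpha^{-\NodeLevel{v}}$ for all $v$. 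First I would normalise: without loss of generality $f(o)=1$, and for each cutset $\Pi\in\CutsetsOf{o}$ the restriction of $f$ to $\Pi$ is a probability measure $\mu_\Pi$ on $\Pi$ (the defining flow relation gives $\sum_{v\in\Pi}f(v)=f(o)=1$ for every cutset, since $\Pi$ disconnects $o$ from the boundary).

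The heart of the argument is to bound the energy $\Energy{\mu_\Pi}$ uniformly in $\Pi$. Plugging \eqref{eq:secondMomentMethodTreeAdaption} into \eqref{eq:muEnergy} gives
\begin{equation*}
	\Energy{\mu_\Pi}
	= \sum_{v,w\in\Pi} f(v)f(w)\,\PercolationKernel(v,w)
	\le C \sum_{v,w\in\Pi} f(v)f(w)\,\alpha^{\NodeLevel{v\Confluent w}}\,.
\end{equation*}
Now I would decompose the double sum according to the confluent $u=v\Confluent w$: for a fixed $u$, the pairs $(v,w)\in\Pi^2$ with $v\Confluent w=u$ are exactly those with $v,w$ below $u$ (or equal to $u$) lying in distinct children-subtrees of $u$, or one of them equal to $u$. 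Using the flow relation repeatedly, $\sum_{v\in\Pi,\ v\ \text{below}\ u} f(v)=f(u)$, so the inner contribution is at most $\alpha^{\NodeLevel{u}} f(u)^2\le\alpha^{\NodeLevel{u}}\alpha^{-2\NodeLevel{u}}=\alpha^{-\NodeLevel{u}}$. Summing over the possible confluents $u$ lying on the finitely many geodesics from $o$ to $\Pi$, and telescoping level by level (at level $n$ the total flow through level-$n$ vertices that are confluents is again bounded using $f$), one gets a bound of the shape $\Energy{\mu_\Pi}\le C\sum_{n\ge0}\alpha^{-n}\cdot(\text{flow mass at level }n)\le C\sum_{n\ge0}\alpha^{-n}=\frac{C\alpha}{\alpha-1}$, a finite constant independent of $\Pi$ — using here that $\alpha>1$, which holds because $\BranchingNumberT\ge1$ and $\alpha<\BranchingNumberT$ can be taken $>1$ whenever $\BranchingNumberT>1$; the degenerate case $\BranchingNumberT=1$ forces $\PMaxV{k,s}$ to be governed by the $\ExplicitGK$ branch and can be handled separately or is vacuous here.

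Given the uniform bound $\Energy{\mu_\Pi}\le K$ for all $\Pi$, Lemma \ref{lem:secondMomentMethod} yields $\Proba(O_o)=\Proba(o\ConnectedTo\infty)\ge\inf_\Pi\sup_\mu\frac1{\Energy\mu}\ge\frac1K>0$, so the rooted percolation reaches the boundary from $o$ with positive probability, and by Lemma \ref{lem:percolationRootedVsUnrooted} the percolation percolates. I expect the main obstacle to be the bookkeeping in the energy estimate: correctly grouping the pairs $(v,w)$ by their confluent $u$, verifying that $\sum_{v\in\Pi\cap\Vertices{\SubtreeRootedAt{\Tree}{u}}} f(v)=f(u)$ for every cutset (which needs that $\Pi$ restricted to the subtree below $u$ is itself a cutset of $u$), and keeping the geometric series summable — i.e. making the "telescoping level by level" rigorous rather than hand-wavy. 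Everything else is a direct citation of the two lemmas and the definition of $\BranchingNumberT$.
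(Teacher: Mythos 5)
Your strategy — group the energy sum by confluents, use a flow on $\Tree$ to control the cutset measures, invoke Lemma \ref{lem:secondMomentMethod} — is exactly the paper's. But there is a genuine gap at the key estimate, and it comes from choosing the wrong flow. You take a nonzero \emph{$\alpha$-flow} $f$, bound the contribution of a confluent $u$ at level $n$ by $\alpha^n f(u)^2$, and then use $f(u)\le\alpha^{-n}$ twice to get $\alpha^{-n}$. Summing this over all confluents $u$ is \emph{not} $\sum_n\alpha^{-n}$: it is $\sum_n |L(\Tree,n)|\,\alpha^{-n}$, which need not converge. If instead you keep one power of the flow, $\alpha^n f(u)^2\le\alpha^n\alpha^{-n}f(u)=f(u)$, then $\sum_{u\in L(\Tree,n)} f(u)= f(o)$ for every cutset level and the sum over $n$ is $\sum_n f(o)=\infty$. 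Either way the bound diverges; the expression $\sum_n\alpha^{-n}\cdot(\text{flow mass at level }n)$ that you write is not what your own estimate produces, and even if it were, the factor $\alpha^{-n}$ is spurious.

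The fix — and the point of the paper's proof — is to use a $\beta$-flow for some $\beta$ strictly between $\alpha$ and $\BranchingNumberT$, not an $\alpha$-flow. Then $g(u)\le\beta^{-n}$ yields
\begin{equation*}
\alpha^n\,g(u)^2\le\alpha^n\beta^{-n}g(u)=\left(\frac{\alpha}{\beta}\right)^{\!n}g(u),
\end{equation*}
and after summing $\sum_{u\in L(\Tree,n)}g(u)\le g(o)$ over each level $n$ one obtains the convergent geometric series $\frac{C}{g(o)}\sum_{n\ge0}(\alpha/\beta)^n=\frac{C}{g(o)}\cdot\frac{\beta}{\beta-\alpha}$, uniformly in $\Pi$. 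The strict inequality $\alpha<\BranchingNumberT$ is precisely what gives you room to insert $\beta$, and $\alpha<\beta$ — not $\alpha>1$ — is the condition making the series converge, so your side-discussion about the case $\BranchingNumberT=1$ is a symptom of having the wrong series rather than a real obstruction. With this one change the rest of your plan (normalising the flow on cutsets, grouping pairs by the confluent, invoking Lemma \ref{lem:secondMomentMethod} and then Lemma \ref{lem:percolationRootedVsUnrooted}) goes through and coincides with the paper's argument.
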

\begin{Rem}
The ``?'' in $\PercolationClassRooted{?}{o}{k}{s}{V}$ means that we place no restriction yet on the marginals of $\Percolation$. The \emph{confluent} of $v$ and $w$ is $v\Confluent w$. See also figure \ref{fig:percolationKernel}.
\end{Rem}
\begin{proof}
Take $\beta\in]\alpha,\BranchingNumberT[$ and let $g$ be a $\beta$-flow. Define $\mu(v):=\frac{g(v)}{g(o)}$, hence $\mu|_\Pi\in\ProbabilityMeasureSpace{\Pi}$ for each vertex cutset $\Pi\in\CutsetsOf{o}$. We have

\begin{align*}
	&\FirstAlign\Energy{\mu|_\Pi}\\
	&=\sum_{v,w\in\Pi} \mu|_\Pi(v)\mu|_\Pi(w)\PercolationKernel(v,w)\\
	&\le \sum_{v,w\in\Pi} \mu(v)\mu(w)
		C \alpha^{\NodeLevel{v\Confluent w}}\\
	&=C
		\sum_{n=0}^{\infty} \alpha^n
		\sum_{\substack{
			v,w\in\Pi\\
			v\Confluent w=:u\in\TreeLevel{\Tree}{n}
		}}
		\mu(v)\mu(w)\\
	&\le C \sum_{n=0}^{\infty} \alpha^n
		\sum_{u\in\TreeLevel{\Tree}{n}}
		\sum_{\substack{
			v,w\in\Pi\\
			u\in\TreePath{o}{v\Confluent w}
		}}
		\frac{g(v)g(w)}{g(o)^2}
		&\text{more nodes}\\
	&= \frac{C}{g(o)^2}
		\sum_{n=0}^{\infty} \alpha^n
		\sum_{u\in\TreeLevel{\Tree}{n}} g(u)^2
		&\text{flow property}\\
	&\le \frac{C}{g(o)^2}
		\sum_{n=0}^{\infty} \left(\frac{\alpha}{\beta}\right)^n
		\sum_{u\in\TreeLevel{\Tree}{n}} g(u)
		&\beta\text{-flow}\\
	&\le \frac{C}{g(o)}
		\sum_{n=0}^{\infty} \left(\frac{\alpha}{\beta}\right)^n
		&\text{flow property}\\
	&= \frac{C}{g(o)} \frac{\beta}{\beta-\alpha}
		&\alpha<\beta,
\end{align*}
which is a finite bound independent of $\Pi$. Apply the weighted second moment method (see lemma \ref{lem:secondMomentMethod}) to see that $\Proba(o\ConnectedTo\infty)>0$ and conclude.
\end{proof}
\begin{figure}[!htbp]
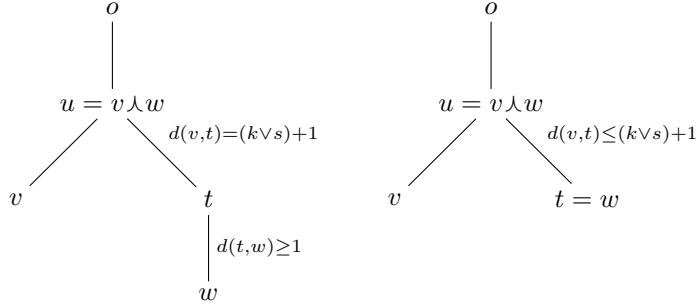

\begin{center}
\begin{displaymath}
\MxyPercolationKernelNormal\qquad
\MxyPercolationKernelDegenerated
\end{displaymath}
\end{center}
\caption[Percolation kernel decomposition]{Decomposition of the percolation kernel $\PercolationKernel(v,w)$ for $k,s$-independent, rooted site percolation. The node $t\in\TreePath{u}{w}$ has distance $(k\lor s)+1$ from $u$ if the path $\TreePath{u}{w}$ is longer than this (left side), otherwise $t=w$ (right side).}
\label{fig:percolationKernel}
\end{figure}
\begin{Prop}\label{prop:ksIndependentKernelBound}
We use the notation from figure \ref{fig:percolationKernel}. Then $\ForAll k,s\in\NatNumZero,
	\Percolation\in\PercolationClassRooted{p}{o}{k}{s}{V}, v,w\in V$:
\begin{equation}\label{eq:ksIndependentKernelBound}
	\PercolationKernel(v,w)\le\frac{1}{\Proba(o\ConnectedTo t|t\ConnectedTo w)}\,.
\end{equation}
\end{Prop}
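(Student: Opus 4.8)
The plan is to use the tree structure to rewrite every connection event as ``a geodesic is entirely open'', and then to detach the part of $\TreePath{u}{w}$ lying strictly below $t$ from the geodesic $\TreePath{o}{v}$ using $k,s$-independence. Write $u:=v\Confluent w$, and assume $\Proba(o\ConnectedTo v)>0$ and $\Proba(o\ConnectedTo w)>0$ (otherwise the left-hand side vanishes or the bound is vacuous). Since $\Percolation$ is a site percolation on a tree, $\Set{o\ConnectedTo x}=\Set{Z_{\TreePath{o}{x}}=\vec{1}}$ for every $x$, and $\Set{o\ConnectedTo v,o\ConnectedTo w}=\Set{Z_{\TreePath{o}{v}\cup\TreePath{o}{w}}=\vec{1}}$. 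Because $\TreePath{o}{u}\subseteq\TreePath{o}{v}$ and $\TreePath{o}{v}\cap\TreePath{u}{w}=\Set{u}$, one has $\TreePath{o}{v}\cup\TreePath{o}{w}=\TreePath{o}{v}\uplus(\TreePath{u}{w}\setminus\Set{u})$, so that
\[
  \PercolationKernel(v,w)=\frac{\Proba(Z_{\TreePath{u}{w}\setminus\Set{u}}=\vec{1}\mid o\ConnectedTo v)}{\Proba(o\ConnectedTo w)}\,.
\]

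Next I would bound the numerator by $\Proba(t\ConnectedTo w)$. As $\TreePath{t}{w}\subseteq\TreePath{u}{w}\setminus\Set{u}$, monotonicity gives $\Proba(Z_{\TreePath{u}{w}\setminus\Set{u}}=\vec{1}\mid o\ConnectedTo v)\le\Proba(Z_{\TreePath{t}{w}}=\vec{1}\mid o\ConnectedTo v)$. In the non-degenerate case $t\ne w$, so $\GraphMetric{u}{t}=(k\lor s)+1$, and I claim $Z_{\TreePath{t}{w}}\IndependentOf Z_{\TreePath{o}{v}}$: the geodesic joining any $x\in\TreePath{o}{v}$ to any $y\in\TreePath{t}{w}$ passes through $u$, so $\GraphMetric{x}{y}\ge\GraphMetric{u}{t}=(k\lor s)+1$, which exceeds both $k$ and $s$; moreover if $x$ is an ancestor of $u$ then $x\IsAncestorOf y$, so $x$ and $y$ lie on a common downray and $k$-independence applies, whereas if $x$ lies strictly below $u$ towards $v$ then $x\Confluent y=u\notin\Set{x,y}$, so $x$ and $y$ lie on no common downray and $s$-independence applies. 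Hence $\Proba(Z_{\TreePath{t}{w}}=\vec{1}\mid o\ConnectedTo v)=\Proba(Z_{\TreePath{t}{w}}=\vec{1})=\Proba(t\ConnectedTo w)$. In the degenerate case $t=w$ the numerator is trivially at most $1=\Proba(t\ConnectedTo w)$.

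Since $t$ lies on $\TreePath{o}{w}$, we have $\Set{o\ConnectedTo w}=\Set{o\ConnectedTo t}\cap\Set{t\ConnectedTo w}$, hence $\Proba(o\ConnectedTo w)=\Proba(o\ConnectedTo t\mid t\ConnectedTo w)\,\Proba(t\ConnectedTo w)$; dividing the numerator bound by this yields $\PercolationKernel(v,w)\le\Proba(t\ConnectedTo w)/\Proba(o\ConnectedTo w)=1/\Proba(o\ConnectedTo t\mid t\ConnectedTo w)$, which is the claim. I expect the only delicate point to be the case split encoded in the definition of $t$: the radius $(k\lor s)+1$ is chosen precisely so that, below $u$, every decorrelation invoked is of the type the class permits ($k$-independence for ancestor pairs, $s$-independence for pairs on distinct branches), while the short ``degenerate'' stub $\TreePath{u}{w}$, where no such decorrelation is available, is absorbed by the crude bound $\le1$.
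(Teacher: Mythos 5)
Your argument follows the same route as the paper's: factor $\Proba(o\ConnectedTo w)=\Proba(t\ConnectedTo w)\Proba(o\ConnectedTo t\mid t\ConnectedTo w)$, decouple the part of $\TreePath{o}{w}$ strictly beyond $t$ from $\TreePath{o}{v}$ via the dependency assumption, and swallow the remaining conditional factor by the crude bound $\le 1$. Your explicit split of the decoupling step into $k$-independence for ancestor pairs and $s$-independence for off-branch pairs is a welcome expansion of the paper's terse ``using $(k\lor s)$-independence''.

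You should, however, straighten out one convention. You fix $\Set{o\ConnectedTo x}=\Set{Z_{\TreePath{o}{x}}=\vec 1}$, \emph{including} the starting endpoint, yet then assert $\Proba(t\ConnectedTo w)=1$ when $t=w$; under your stated convention this probability is $p$, not $1$, and the degenerate-case bound you need becomes $\Proba(Z_{\TreePath{u}{w}\setminus\Set{u}}=\vec 1\mid o\ConnectedTo v)\le p$, which is false in general. For instance, take $k=1$, $s=0$, a root $o$ with children $v,w$, let $Z_o$ be Bernoulli$(p)$, $Z_v$ an independent Bernoulli$(p)$, and $Z_w:=Z_o$ (a valid $(1,0)$-independent site percolation with parameter $p$); under the endpoint-inclusive reading one computes $\PercolationKernel(v,w)=1/p$ while $\Proba(o\ConnectedTo t\mid t\ConnectedTo w)=1$, so \eqref{eq:ksIndependentKernelBound} would fail. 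The convention that makes both the proposition and your proof correct --- and which the paper uses implicitly, cf.\ the identity $\Proba(o\ConnectedTo t\mid t\ConnectedTo w)=\hat p^{\,\NodeLevel{t}}$ in proposition~\ref{prop:minimalModelFacts} --- is $\Set{y\ConnectedTo z}=\Set{Z_{\TreePath{y}{z}\setminus\Set{y}}=\vec 1}$ for $y$ an ancestor of $z$, under which $\Proba(t\ConnectedTo t)=1$ and your ``trivially at most $1$'' is exactly right. Modulo that fix, your proof is sound and essentially identical to the paper's.
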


\begin{proof}
We use the notation from figure \ref{fig:percolationKernel}. In the case $\GraphMetric{u}{w}>(k\lor s)+1$ we have
\begin{align*}
	&\FirstAlign\PercolationKernel(v,w)\\
	&= \frac%
		{\Proba(o\ConnectedTo v,o\ConnectedTo w)}
		{\Proba(o\ConnectedTo v)\Proba(o\ConnectedTo w)}\\
	&= \frac%
		{\Proba(o\ConnectedTo v,o\ConnectedTo w)}
		{\Proba(o\ConnectedTo v)\Proba(t\ConnectedTo w)\Proba(o\ConnectedTo t|t\ConnectedTo w)}\\
	&= \frac%
		{\Proba(o\ConnectedTo v,u\ConnectedTo w)}
		{\Proba(o\ConnectedTo v,t\ConnectedTo w)}
		\frac{1}{\Proba(o\ConnectedTo t|t\ConnectedTo w)}
		&\text{using }(k\lor s)\text{-independence}\\
	&\le \frac{1}{\Proba(o\ConnectedTo t|t\ConnectedTo w)}\ .
\end{align*}

In the case $\GraphMetric{u}{w}\le(k\lor s)+1$ we have $t=w$ and
\begin{equation*}
	\FirstAlign\PercolationKernel(v,w)
	= \frac%
		{\Proba(o\ConnectedTo v,o\ConnectedTo w)}
		{\Proba(o\ConnectedTo v)\Proba(o\ConnectedTo w)}
	= \Proba(u\ConnectedTo t|o\ConnectedTo v)\frac{1}{\Proba(o\ConnectedTo t)}
	\le \frac{1}{\Proba(o\ConnectedTo t|t\ConnectedTo w)}\ .
\end{equation*}
\end{proof}
\subsubsection{Uniform bound by \NameShearersMeasure{}}
\label{sec:uniformBoundByShearerMeasure}
The following proposition combines our knowledge of $\ShearerMeasure{\FuzzKZ}{p}$ and its properties with the simplified condition on the percolation kernel from proposition \ref{prop:ksIndependentKernelBound} to ensure uniform percolation.

\begin{Prop}\label{prop:secondMomentBound}
\begin{equation}\label{eq:secondMomentBound}
\ForAll k,s\in\NatNumZero:\quad
	\PMaxV{k,s}\le\begin{cases}
		\ExplicitGK(\BranchingNumberT)
		&\text{if }\BranchingNumberT\le\frac{k+1}{k}\\
		\PShearerKZ
		&\text{if }\BranchingNumberT\ge\frac{k+1}{k}\,.
	\end{cases}
\end{equation}
Furthermore for $\BranchingNumberT>\frac{k+1}{k}$ every percolation in $\PercolationClassRooted{\PShearerKZ}{o}{k}{s}{V}$ percolates. In the case $k=0$ we interpret $\frac{1}{0}:=\infty$.
\end{Prop}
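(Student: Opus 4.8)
The plan is to derive a uniform exponential bound on the percolation kernel and then invoke the second-moment criterion of proposition \ref{prop:secondMomentMethodTreeAdaption}. I may and will assume $p\ge\PShearerKZ$ throughout; this turns out to be automatic in every parameter range that matters. Recall that $\TheXi=\TheXi(p,k)$, the root of $\ExplicitHK(\TheXi)=1-p$ in $[k/(k+1),1]$, is strictly increasing in $p$ with $\TheXi=k/(k+1)$ at $p=\PShearerKZ$.

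First I would fix $\Percolation=Z\in\PercolationClassRooted{p}{o}{k}{s}{V}$ and $v,w\in V$ and adopt the notation of figure \ref{fig:percolationKernel}, so $u:=v\Confluent w$ and $t$ is the node of $\TreePath{u}{w}$ at distance $(k\lor s)+1$ from $u$, or $t=w$ if $\TreePath{u}{w}$ is shorter. Since on a tree $\Set{a\ConnectedTo b}=\Set{Z_{\TreePath{a}{b}}=\vec{1}}$, proposition \ref{prop:ksIndependentKernelBound} reads
\begin{equation*}
	\PercolationKernel(v,w)\le\frac{1}{\Proba(o\ConnectedTo t\mid t\ConnectedTo w)}
	=\frac{1}{\Proba(Z_{\TreePath{o}{w}}=\vec{1}\mid Z_{\TreePath{t}{w}}=\vec{1})}\,.
\end{equation*}
The core step is to minorize this conditional probability by $\TheXi^{\NodeLevel{t}+1}$. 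The restriction of $Z$ to the downray through $w$ is a $k$-independent \NameBernoulli{} field with dependency graph $\FuzzKN$, so the minimality of \NameShearersMeasure{} (lemma \ref{lem:shearerMinimality}, applicable since $p\ge\PShearerKZ=\PShearer{\FuzzKN}$ and $\ShearerCriticalFunction{\FuzzKL{n}}(p)\ge\TheXi^{n}>0$ by \eqref{eq:shearerKFuzzZEscapingOVOEPMinoration}) gives
\begin{equation*}
	\Proba(Z_{\TreePath{o}{w}}=\vec{1}\mid Z_{\TreePath{t}{w}}=\vec{1})
	\ge\ShearerMeasure{\FuzzKN}{p}(Y_{\TreePath{o}{w}}=\vec{1}\mid Y_{\TreePath{t}{w}}=\vec{1})\,.
\end{equation*}
Identifying the downray with $\NatNum$ (so $o=0$, $t=\NodeLevel{t}$, $w=\NodeLevel{w}$) and using the stationarity of \NameShearersMeasure{}, the right-hand side equals $\ShearerCriticalFunction{\FuzzKL{\NodeLevel{w}+1}}(p)/\ShearerCriticalFunction{\FuzzKL{\NodeLevel{w}-\NodeLevel{t}+1}}(p)$, which telescopes into a product of $\NodeLevel{t}$ ratios of the form $\ShearerCriticalFunction{\FuzzKL{j}}(p)/\ShearerCriticalFunction{\FuzzKL{j-1}}(p)$, each $\ge\TheXi$ by \eqref{eq:shearerKFuzzZEscapingOVOEPMinoration} (these are the $\beta_j$ from the proof of proposition \ref{prop:pShearerKFuzzZ}). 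The degenerate case $t=w$ is covered by the cruder estimate $\Proba(o\ConnectedTo w\mid w\ConnectedTo w)=\Proba(Z_{\TreePath{o}{w}}=\vec{1})/p\ge\TheXi^{\NodeLevel{w}+1}$. As $\NodeLevel{t}\le\NodeLevel{u}+(k\lor s)+1$ in both cases, combining the two displays yields
\begin{equation*}
	\PercolationKernel(v,w)\le\TheXi^{-(k\lor s)-2}\left(\TheXi^{-1}\right)^{\NodeLevel{v\Confluent w}}\,,
\end{equation*}
that is, hypothesis \eqref{eq:secondMomentMethodTreeAdaption} with $\alpha=\TheXi^{-1}$. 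So whenever $\TheXi^{-1}<\BranchingNumberT$, proposition \ref{prop:secondMomentMethodTreeAdaption} shows that $\Percolation$ percolates.

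It then remains to read off which $p$ satisfy $\TheXi>\BranchingNumberTReciprocal$. If $\BranchingNumberT>\frac{k+1}{k}$, then $\BranchingNumberTReciprocal<\frac{k}{k+1}\le\TheXi$ for every $p\ge\PShearerKZ$; hence $\PMaxV{k,s}\le\PShearerKZ$, and at $p=\PShearerKZ$ the inequality is still strict, which gives the final assertion. If $\BranchingNumberT\le\frac{k+1}{k}$ (in particular $k=0$, where $\PShearerKZ=0$ and $\TheXi=p$), then $\TheXi$ and $\BranchingNumberTReciprocal$ both lie in the range where $\ExplicitHK$ is strictly decreasing, so $\TheXi>\BranchingNumberTReciprocal$ iff $1-p<\ExplicitHK(\BranchingNumberTReciprocal)=(\BranchingNumberT-1)/\BranchingNumberT^{k+1}$, i.e. $p>\ExplicitGK(\BranchingNumberT)$; moreover $\ExplicitGK(\BranchingNumberT)\ge 1-\PowerFracDualK=\PShearerKZ$ because $\ExplicitHK(\BranchingNumberTReciprocal)$ does not exceed the maximal value $\PowerFracDualK$ of $\ExplicitHK$, so the standing assumption $p\ge\PShearerKZ$ holds there automatically. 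Thus $\PMaxV{k,s}\le\ExplicitGK(\BranchingNumberT)$.

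I expect the main obstacle to be the minorization step. One must notice that $\Set{t\ConnectedTo w}$ is precisely an all-ones event $\Set{Z_{\TreePath{t}{w}}=\vec{1}}$, so that lemma \ref{lem:shearerMinimality} may be applied with this conditioning, and then turn $\ShearerMeasure{\FuzzKN}{p}(Y_{\TreePath{o}{w}}=\vec{1}\mid Y_{\TreePath{t}{w}}=\vec{1})$ into a bound whose exponent is $\NodeLevel{t}$ rather than $\NodeLevel{w}$ (which is what makes the kernel estimate have the form $C\,\alpha^{\NodeLevel{v\Confluent w}}$ required by proposition \ref{prop:secondMomentMethodTreeAdaption}), using stationarity together with the estimate $\beta_j\ge\TheXi$, while also disposing of the degenerate geometry $t=w$. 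The remaining bookkeeping (solving $\TheXi>\BranchingNumberTReciprocal$ for $p$ and checking $p\ge\PShearerKZ$) is routine.
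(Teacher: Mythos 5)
Your proposal is correct and follows essentially the same route as the paper's proof: combine the kernel estimate of proposition~\ref{prop:ksIndependentKernelBound} with the minimality of Shearer's measure along a downray, the minoration $\beta_j\ge\TheXi$ from proposition~\ref{prop:shearerKFuzzZEstimates}, and the second-moment criterion of proposition~\ref{prop:secondMomentMethodTreeAdaption}, then translate $\TheXi^{-1}<\BranchingNumberT$ into the stated bound on $p$. You spell out a few steps that the paper leaves implicit (the reduction to a one-dimensional conditional event, the telescoping into $\beta_j$-ratios, and the explicit verification that $p\ge\PShearerKZ$ is automatic in both parameter regimes), which is a genuine improvement in readability. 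One tiny remark: your degenerate-case estimate $\Proba(Z_{\TreePath{o}{w}}=\vec{1})/p\ge\TheXi^{\NodeLevel{w}+1}$ simply drops the division by $p$ and thus loses one factor of $\TheXi$ relative to the telescoping argument (which, run with $\ShearerCriticalFunction{\FuzzKL{1}}(p)=p$ in the denominator, already yields $\TheXi^{\NodeLevel{t}}$ uniformly, as in the paper); your constant $C=\TheXi^{-(k\lor s)-2}$ is therefore a factor $\TheXi^{-1}$ worse than the paper's $\TheXi^{-(k\lor s)-1}$. Since proposition~\ref{prop:secondMomentMethodTreeAdaption} cares only about the base $\alpha=\TheXi^{-1}$ and not about $C$, this costs nothing.
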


\begin{proof}
Let $p\ge\PShearerKZ$. Use the notation from figure \ref{fig:percolationKernel}. Let $\TheXi$ be the unique solution of the equation $1-p=\TheXi(1-\TheXi)^k$ from \eqref{eq:theXi}. In a first step we use \eqref{eq:ksIndependentKernelBound}, the minimality of \NameShearersMeasure{} \eqref{eq:shearerMinimalityCriticalFunction}, the explicit minoration of \NameShearersMeasure{} on $\FuzzKN$ \eqref{eq:shearerKFuzzZEscapingOVOEPMinoration} and the fact that $\NodeLevel{t}\le\NodeLevel{u}+(k\lor s)+1$ to majorize the percolation kernel as follows:
\begin{equation*}
    \PercolationKernel(v,w)
\le \frac{1}{\Proba(o\ConnectedTo t|t\ConnectedTo w)}
\le \frac{1}{\ShearerMeasure{\FuzzKZ}{p}(o\ConnectedTo t|t\ConnectedTo w)}
\le \frac{1}{\TheXi^{\NodeLevel{t}}}
\le \TheXi^{-(k\lor s)-1}
    \TheXi^{-\NodeLevel{u}}\,.
\end{equation*}
In the second step we want to apply the sufficient exponential bound condition on the percolation kernel from proposition \ref{prop:secondMomentMethodTreeAdaption}, hence we have to relate $\TheXi$ with $\BranchingNumberT$. The function $\ExplicitGK$ \eqref{eq:explicitGK} satisfies $\ExplicitGK(\frac{1}{\TheXi})=p$, has a global minimum in $\frac{k+1}{k}$ with value $\PShearerKZ$ and induces a strictly monotone decreasing bijection between $[1,\frac{k+1}{k}]$ and $[\PShearerKZ,1]$.\\

Case $\BranchingNumberT\le\frac{k+1}{k}$ and $\ExplicitGK(\BranchingNumberT)<p=\TheXi^k(1-\TheXi)$: Apply proposition \ref{prop:secondMomentMethodTreeAdaption} with $C:=\TheXi^{-(k\lor s)-1}$ and $\alpha:=\frac{1}{\TheXi}<\BranchingNumberT$ to show that we percolate. This proves the $\ExplicitGK$ part of \eqref{eq:secondMomentBound}.\\

Case $\BranchingNumberT>\frac{k+1}{k}$ and $\PShearerKZ\le p$: Apply proposition \ref{prop:secondMomentMethodTreeAdaption} with $C:=\TheXi^{-(k\lor s)-1}$ and $\alpha:=\frac{1}{\TheXi}\le\frac{k+1}{k}<\BranchingNumberT$ to show that we percolate. This proves the $\PShearerKZ$ part of \eqref{eq:secondMomentBound} and the percolation statement at $\PShearerKZ$.
\end{proof}

We show that we need uniformly bounded elsewhere-dependences to guarantee percolation for high $p$. The counterexample consists of \emph{multiplexing} a distribution indexed by $\NatNumZero$ over the corresponding level of $\Tree$.

\begin{Mod}\label{mod:multiplex}
For $p\ge\PShearerKZ$ let $\mathcal{Z}:=\Set{\mathcal{Z}_n}_{n\in\NatNumZero}$ be a collection of $k$-independent Bernoulli($p$) rvs. Define a site percolation $Z:=(Z_v)_{v\in V}$ on the rooted tree $\Tree$ by
\begin{equation}\label{eq:unboundedElsewhereDependencyCounterExample}
	Z_v:=\mathcal{Z}_{\NodeLevel{v}}\,.
\end{equation}
\end{Mod}

\begin{Prop}\label{prop:multiplexFacts}
For every $s\in\NatNum$ we have $Z\not\in\PercolationClassRooted{p}{o}{k}{s}{V}$ and $Z$ percolates iff $p=1$.
\end{Prop}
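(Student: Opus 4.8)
The plan is to prove the two claims separately. First, to see that $Z\notin\PercolationClassRooted{p}{o}{k}{s}{V}$ for any finite $s$, I would exhibit, for each $s$, a pair of vertices $v,w$ at arbitrarily large distance whose values are not independent. Take $v$ and $w$ on the same level $n$ but in different subtrees, so that $\GraphMetric{v}{w}$ can be made as large as we like while $\NodeLevel{v}=\NodeLevel{w}=n$; by construction $Z_v=Z_w=\mathcal{Z}_n$, so $Z_v$ and $Z_w$ are perfectly correlated (and non-constant, since $0<p<1$ forces $\mathcal{Z}_n$ to be a genuine Bernoulli rv, using $p\ge\PShearerKZ$ and, for $k\ge1$, $\PShearerKZ<1$; the degenerate cases are trivial). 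Hence there is no finite $s$ for which vertices not on a common downray at distance $>s$ are independent, so $Z$ is not $k,s$-independent for any $s\in\NatNum$. I would also note that along a single downray the values are exactly $(\mathcal{Z}_0,\mathcal{Z}_1,\dotsc)$, which is $k$-independent, so $Z$ does satisfy the ``$k$-independent along downrays'' half of the definition — it is precisely the elsewhere-dependence that fails.

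For the percolation dichotomy, the key observation is that the open cluster of $o$ is determined level by level: level $n$ of $\Tree$ is entirely open if $\mathcal{Z}_n=1$ and entirely closed if $\mathcal{Z}_n=0$. Therefore the cluster of $o$ consists of exactly those vertices $v$ for which $\mathcal{Z}_m=1$ for all $0\le m\le\NodeLevel{v}$, and it is infinite if and only if $\mathcal{Z}_n=1$ for all $n\in\NatNumZero$. So $Z$ percolates iff $\Proba(\ForAll n:\mathcal{Z}_n=1)>0$. Now I would invoke the minimality of \NameShearersMeasure{}: since $\mathcal{Z}$ is a $k$-independent \NameBernoulli{} field on $\NatNum$ with parameter $p\ge\PShearerKZ=\PShearerKN$, lemma \ref{lem:shearerMinimality} gives $\Proba(\mathcal{Z}_{\IntegerSet{n}}=\vec{1})\ge\ShearerCriticalFunction{\FuzzKL{n}}(p)$ for every $n$. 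If $p=1$ then trivially $\Proba(\ForAll n:\mathcal{Z}_n=1)=1>0$, so $Z$ percolates. If $p<1$, then $q>0$ and from \eqref{eq:shearerKFuzzZCriticalFunctionMajoration} — or more simply from the right-hand inequality $\Proba(\mathcal{Z}_{\IntegerSet{n}}=\vec1)\le\eta^n$ with $\eta^{k+1}=p<1$ coming from $k$-independence as in \eqref{eq:intuitiveComparisonRay} — we get $\Proba(\mathcal{Z}_{\IntegerSet{n}}=\vec1)\to0$, hence $\Proba(\ForAll n:\mathcal{Z}_n=1)=0$ and $Z$ does not percolate.

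The only mildly delicate point is the ``only if'' direction of the last inequality, i.e. confirming that $\Proba(\ForAll n:\mathcal{Z}_n=1)=\lim_n\Proba(\mathcal{Z}_{\IntegerSet{n}}=\vec1)$ by monotone convergence of the decreasing events $\Set{\mathcal{Z}_{\IntegerSet{n}}=\vec1}$, together with the elementary fact that a $k$-independent Bernoulli field with parameter $p<1$ has $\Proba(\mathcal{Z}_{\IntegerSet{n}}=\vec1)\le p^{\Floor{n/(k+1)}}\to0$ (decompose $\IntegerSet{n}$ into $\approx n/(k+1)$ blocks pairwise at distance $>k$ and use independence across blocks). This is routine and I expect no real obstacle; the substance of the proposition is just the level-by-level structure of the cluster plus the observation that the marginals $\mathcal{Z}_n$ are non-degenerate exactly when $p<1$.
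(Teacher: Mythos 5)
Your proof is correct and follows essentially the same route as the paper. For the first claim the paper simply observes that all sites on a fixed level realize a.s.\ in the same state, so the effective elsewhere-dependence at level $n$ lies in $[2n,2n+k]$ and is unbounded in $n$; your ``pick $v,w$ on the same level at distance $>s$'' phrasing is the same idea made explicit. For the percolation claim the paper likewise reduces to $\Proba(o\ConnectedTo\TreeLevel{\Tree}{n})=\Proba(\mathcal{Z}_0=\dotso=\mathcal{Z}_n=1)\le p^{n/(k+1)}\to 0$ for $p<1$; your level-by-level description of the cluster and the bound $p^{\Floor{n/(k+1)}}$ are the same argument. The Shearer lower bound you invoke is not actually needed for the dichotomy, and you correctly flag the degenerate $p=1$ case for the non-membership claim (which the paper silently ignores), but neither affects the substance.
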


\begin{proof}
All the sites on a chosen level of $\Tree$ realize \AlmostSureLy{} in the same state. Therefore the elsewhere-dependence $s_v$ of $v$ is in the range $2\,\NodeLevel{v}\le s_v\le 2\,\NodeLevel{v}+ k$ and unbounded in $v$. Using \eqref{eq:unboundedElsewhereDependencyCounterExample} and $k$-independence we get
\begin{equation*}
	\ForAll n\in\NatNum:\quad
	\Proba(o\ConnectedTo\TreeLevel{\Tree}{n})
	=\Proba(\mathcal{Z}_0=\dotso=\mathcal{Z}_n=1)
	\le p^{n/(k+1)}\,.
\end{equation*}
This exponential upper bound implies that $\Proba(O_o)=0$ iff $p<1$.
\end{proof}
\subsection{Lower bound on \texorpdfstring{$\PMaxV{k,s}$}{the maximal p}}
\label{sec:lowerBoundPMax}
To derive a lower bound on $\PMaxV{k,s}$ we exhibit appropriate nonpercolating percolation models. The proof of proposition \ref{prop:secondMomentBound} suggests to look for percolations being $\ShearerMeasure{\FuzzKZ}{p}$-distributed along downrays. To be as general as possible we also want $s=0$. Section \ref{sec:treeFission} presents a procedure to construct a $k,0$-independent percolation model with given distribution along downrays. We then apply this construction to probability distributions derived from  $\ShearerMeasure{\FuzzKZ}{p}$ and $\ShearerMeasure{\FuzzKL{N}}{p}$. Applying the first moment method and relating the relevant parameters to $\BranchingNumberT$ yields the lower bounds.
\subsubsection{Tree fission}\label{sec:treeFission}
In this section we show how to create a $k,0$-independent percolation model from a $k$-independent \NameBernoulli{} random field $\mathcal{Z}$ indexed by $\NatNumZero$. Additionally the resulting model has the same distribution along all downrays, namely the one of $\mathcal{Z}$, and is invariant under automorphisms of the rooted tree. The generic construction is presented in proposition \ref{prop:treeFission} and specialized to our setting in corollary \ref{cor:treeFissionInRootedClass}.

\begin{Prop}\label{prop:treeFission}
Let $\mathcal{Z}:=\Set{\mathcal{Z}_n}_{n\in\NatNumZero}$ be a \NameBernoulli{} random field and $\Tree:=(V,E)$ be a tree rooted at $o$. Then there exists a unique probability measure $\nu$, called the \emph{$\Tree$-fission of $\mathcal{Z}$}, under which the \NameBernoulli{} field $Z:=\Set{Z_v}_{v\in V}$ has the following properties:
\begin{subequations}\label{eq:treeFission}
\begin{multline}\label{eq:treeFissionSubtreeIndependence}
	\ForAll W\subseteq V:\quad
		\text{if }\ForAll v,w\in W:
			v\not\in\Vertices{\SubtreeRootedAt{\Tree}{w}}\,,\\
	\text{then the subfields }
	\Set{Z_{\Vertices{\SubtreeRootedAt{\Tree}{w}}}}_{w\in W}
	\text{ are independent.}
\end{multline}
\begin{equation}\label{eq:treeFissionPathMarginal}
	\ForAll v\in V:\quad
		Z_{\TreePath{o}{v}}\text{ has the same law as }
		\Set{\mathcal{Z}_{\NodeLevel{w}}}_{w\in\TreePath{o}{v}}\,.
\end{equation}
\end{subequations}
Furthermore $Z$ is invariant under automorphisms of the rooted tree.
\end{Prop}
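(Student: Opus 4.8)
The plan is to build $\nu$ as a projective limit over finite truncations of $\Tree$, in the same spirit as the \NameKolmogorov{}-extension construction of $\ShearerMeasure{G}{p}$ on infinite graphs in section~\ref{sec:shearerDefinitionAndGeneralProperties}. For $n\in\NatNumZero$ write $\Tree_n$ for the finite subtree induced by $\bigcup_{m=0}^n\TreeLevel{\Tree}{m}$, and define probability measures $\nu_n$ on $\Set{0,1}^{\Vertices{\Tree_n}}$ by induction on $n$: let $\nu_0$ make $Z_o\DistributedAs\mathcal{Z}_0$, and, given $\nu_n$, obtain $\nu_{n+1}$ by prescribing the conditional law of $\Set{Z_v}_{v\in\TreeLevel{\Tree}{n+1}}$ given $Z_{\Vertices{\Tree_n}}$ to be the product, over $v\in\TreeLevel{\Tree}{n+1}$, of the conditional law of $\mathcal{Z}_{n+1}$ given $\mathcal{Z}_{\Set{0,\dotsc,n}}=Z_{\TreePath{o}{\Parent{v}}}$. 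Thus each new vertex's value reads off only the configuration along its own ancestral path, and distinct new vertices randomize conditionally independently. Everything in sight is discrete, so these kernels exist with no measure-theoretic fuss; on ancestral configurations of $\mathcal{Z}$-probability zero — which, by the inductive hypothesis for the path marginal below, also carry $\nu_n$-probability zero — one simply fixes an arbitrary kernel.

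The first point to verify is the path marginal \eqref{eq:treeFissionPathMarginal}, by induction on $n$: for $v$ at level $n+1$ with parent $u$, the conditional law of $Z_v$ given $Z_{\TreePath{o}{u}}$ — obtained by summing the level-$(n+1)$ kernel over the remaining level-$(n+1)$ vertices, which is legitimate precisely because that kernel depends on $Z_{\Vertices{\Tree_n}}$ only through $Z_{\TreePath{o}{u}}$ — is by construction the conditional law of $\mathcal{Z}_{n+1}$ given $\mathcal{Z}_{\Set{0,\dotsc,n}}$; combined with $Z_{\TreePath{o}{u}}\DistributedAs\mathcal{Z}_{\Set{0,\dotsc,n}}$ this gives $Z_{\TreePath{o}{v}}\DistributedAs\mathcal{Z}_{\Set{0,\dotsc,n+1}}$. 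Next, $\nu_{n+1}$ restricted to $\Vertices{\Tree_n}$ equals $\nu_n$, since summing a product of probability kernels over the level-$(n+1)$ variables returns $1$. Hence $(\nu_n)_n$ is a consistent family; \NameKolmogorov{}'s existence theorem produces $\nu$ on $\Set{0,1}^V$ with $\nu|_{\Vertices{\Tree_n}}=\nu_n$, the $\pi$-$\lambda$ theorem gives uniqueness of this extension, and \eqref{eq:treeFissionPathMarginal} passes to the limit.

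For the subtree-independence \eqref{eq:treeFissionSubtreeIndependence}, fix such a $W$ and note that the $\SubtreeRootedAt{\Tree}{w}$, $w\in W$, are then pairwise disjoint. Working inside a truncation $\Tree_N$: given the configuration on the complementary vertices — which contains every ancestor of every $w\in W$ — the values inside each $\SubtreeRootedAt{\Tree_N}{w}$ are generated, level by level, using only that fixed boundary data and the subtree's own already-generated values, with distinct vertices randomizing independently; hence the $\Set{Z_{\Vertices{\SubtreeRootedAt{\Tree_N}{w}}}}_{w\in W}$ are conditionally independent, and one passes to the limit again via the $\pi$-$\lambda$ theorem. Uniqueness of $\nu$ then follows from \eqref{eq:treeFission} alone: \eqref{eq:treeFissionPathMarginal} fixes the law along every root-to-vertex path, and applying \eqref{eq:treeFissionSubtreeIndependence} recursively — first to the children of $o$, then to their children, and so on — pins down every finite-dimensional marginal. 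Finally, an automorphism $\phi$ of the rooted tree preserves levels and ancestral paths, so $\phi_*\nu$ again satisfies \eqref{eq:treeFission}; by the uniqueness just established $\phi_*\nu=\nu$, which is the asserted automorphism invariance.

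The step I expect to be the real obstacle is \eqref{eq:treeFissionSubtreeIndependence}: one has to be precise about the sense in which the disjoint subtrees rooted at an antichain $W$ are ``independent'' — what falls out of the construction cleanly is their independence conditionally on the configuration on the remaining (ancestral) vertices — and to match this with the formulation in the statement and with what the downstream moment-method arguments (propositions~\ref{prop:ksIndependentKernelBound} and \ref{prop:secondMomentMethodTreeAdaption}) actually demand of the models produced here. Everything else — the path-marginal induction, the consistency check, and the two appeals to the \NameKolmogorov{} and $\pi$-$\lambda$ theorems — is routine.
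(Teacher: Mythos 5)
Your construction is essentially the paper's. Where you build $\nu$ level by level over the truncations $\Tree_n$, the paper works with the family of all finite connected subtrees $R$ containing $o$ and writes the marginal directly as a product, over $v\in R$, of $\Proba(\mathcal{Z}_{\NodeLevel{v}}=s_v\mid \mathcal{Z}_{\NodeLevel{w}}=s_w\ \forall w$ a strict ancestor of $v$ in $R)$; your recursive kernels generate exactly these product marginals, so the two constructions coincide. The consistency check, the two appeals to \NameKolmogorov{}'s existence theorem and the $\pi$-$\lambda$ theorem, the inductive path-marginal verification, and reading off automorphism invariance from the intrinsic characterisation of $\nu$ all match what the paper does (including the details deferred to its ``Additional material'' section).

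The obstacle you flag at the end is real, and it is not resolved in the paper either. What the construction actually yields is independence of the subtree fields $\Set{Z_{\Vertices{\SubtreeRootedAt{\Tree}{w}}}}_{w\in W}$ \emph{conditionally} on the configuration over the common ancestral set $U:=\bigcup_{w\in W}\bigl(\TreePath{o}{w}\setminus\Set{w}\bigr)$. The paper's additional material establishes precisely this conditional factorisation and then asserts, without argument, that ``conditional independence on $Z_U$ implies independence as in \eqref{eq:treeFissionSubtreeIndependence}''. That inference is not valid in general, and here it genuinely fails: take $\mathcal{Z}$ to be $\ShearerMeasure{\FuzzKN}{p}$ with $k=1$ and $W=\Set{w_1,w_2}$ the two children of $o$; the tree-fission gives $\Proba(Z_{w_1}=0,Z_{w_2}=0)=q^2/p$ while $\Proba(Z_{w_1}=0)\,\Proba(Z_{w_2}=0)=q^2$, so the two subtree fields are correlated through $Z_o$ whenever $p<1$. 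Thus the unconditional reading of \eqref{eq:treeFissionSubtreeIndependence} is false; the correct content (which both your argument and the paper's actually prove, and which is all that corollary~\ref{cor:treeFissionInRootedClass} and the kernel bound in proposition~\ref{prop:ksIndependentKernelBound} require downstream) is independence conditionally on $Z_U$. So you should not try to close the gap by passing to a limit — there is nothing there to close; rather the statement of \eqref{eq:treeFissionSubtreeIndependence} should be read, and ideally rewritten, as conditional independence given the ancestral configuration, and your uniqueness argument should be phrased accordingly (path marginal plus conditional subtree independence pins down every finite-dimensional marginal, exactly as in the paper's product formula). With that amendment, your proof is correct and coincides with the paper's.
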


\begin{proof}
For $v\in V$ let $A(v):=\TreePath{o}{v}\setminus\Set{v}$ be the set of all ancestors of $v$. Let $\mathcal{S}$ be the family of vertices of finite connected components of $V$ containing $o$. For $R\in\mathcal{S}$ define the probability measure $\nu_R$ on $\Set{0,1}^R$ by setting
\begin{equation}\label{eq:treeFissionDefinition}
	\ForAll \vec{s}_R\in\Set{0,1}^R:\quad
		\nu_R(Y_R=\vec{s}_R)
		:= \prod_{v\in R}
			\Proba(\mathcal{Z}_{\NodeLevel{v}}=s_v|
				\ForAll w\in A(v):\mathcal{Z}_{\NodeLevel{w}}=s_w)\,.
\end{equation}

We claim that $\Set{\nu_R}_{R\in\mathcal{S}}$ is a consistent family à la \NameKolmogorov{}. Furthermore each $\nu_R$ has properties \eqref{eq:treeFission}. One can prove these claims by induction over the cardinality of $R$ (omitted). Hence \NameKolmogorov{}'s existence theorem \cite[theorem 36.2]{Billingsley_probability} yields an extension $\nu$ of the above family. The probability measure $\nu$ fulfils \eqref{eq:treeFission} because all its marginals $\nu_R$ do so. Uniqueness follows from the fact that the properties \eqref{eq:treeFission} imply the construction of the marginal laws $\nu_R$ via \eqref{eq:treeFissionDefinition} and the $\pi-\lambda$ theorem \cite[theorem 3.3]{Billingsley_probability}.
\end{proof}

\begin{Cor}\label{cor:treeFissionInRootedClass}
If $\mathcal{Z}$ from proposition \ref{prop:treeFission} is $k$-independent and has marginal parameter $p$ then $\nu$, the $\Tree$-fission of $\mathcal{Z}$, is the law of a percolation in $\PercolationClassRooted{p}{o}{k}{0}{V}$ invariant under automorphisms of the rooted tree.
\end{Cor}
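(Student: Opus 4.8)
The plan is to read off, directly from the two defining properties \eqref{eq:treeFissionSubtreeIndependence} and \eqref{eq:treeFissionPathMarginal} of the \emph{$\Tree$-fission} $Z$ supplied by Proposition \ref{prop:treeFission}, that $Z$ satisfies the three requirements for membership in $\PercolationClassRooted{p}{o}{k}{0}{V}$: marginal parameter $p$, $k$-independence along every downray from $o$, and $0$-independence elsewhere. Automorphism-invariance requires no separate argument, as it is the final assertion of Proposition \ref{prop:treeFission}.

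The first two requirements are immediate from \eqref{eq:treeFissionPathMarginal}. Projecting the law of $Z_{\TreePath{o}{v}}$ onto the endpoint $v$ and using that $\mathcal{Z}_{\NodeLevel{v}}$ is \NameBernoulli{}$(p)$ shows $Z$ has parameter $p$. For the downray condition, fix a downray $\xi$ from $o$; consecutive vertices of $\xi$ differ in level by one, so $w\mapsto\NodeLevel{w}$ identifies $\xi$ isometrically and bijectively with $\NatNumZero$. Since \eqref{eq:treeFissionPathMarginal} holds for every finite downpath $\TreePath{o}{v}\subseteq\xi$, the law of $Z$ restricted to $\xi$ equals that of $\Set{\mathcal{Z}_{\NodeLevel{w}}}_{w\in\xi}$; as $\mathcal{Z}$ is $k$-independent, the identification preserves distances, and $k$-independence is determined by finite-dimensional marginals, $Z$ restricted to $\xi$ is $k$-independent. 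As $\xi$ was arbitrary, $Z$ is $k$-independent along downrays from $o$.

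The remaining requirement, $0$-independence elsewhere, is where \eqref{eq:treeFissionSubtreeIndependence} enters, and it is the one step deserving care. The idea is that for two vertex sets $A$ and $B$ which the definition of the class (with $s=0$) forces to be independent, one exhibits a finite antichain $W$ such that $A$ and $B$ are carried by complementary sub-collections of the induced subtrees $\Set{\Vertices{\SubtreeRootedAt{\Tree}{w}}}_{w\in W}$, whereupon \eqref{eq:treeFissionSubtreeIndependence} yields $Z_A\IndependentOf Z_B$. The atomic case exhibits the mechanism: for incomparable $v,w$ the confluent $u:=v\Confluent w$ has distinct children $c\ne c'$ with $v\in\Vertices{\SubtreeRootedAt{\Tree}{c}}$ and $w\in\Vertices{\SubtreeRootedAt{\Tree}{c'}}$, so \eqref{eq:treeFissionSubtreeIndependence} applied to the antichain of children of $u$ gives $Z_v\IndependentOf Z_w$; the general case is the same bookkeeping over finitely many confluents. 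Combining the three verifications places $Z$ in $\PercolationClassRooted{p}{o}{k}{0}{V}$. I expect the main (indeed only) obstacle to be this last translation: extracting from the definition of the class exactly which pairs $(A,B)$ must be declared independent and confirming that a suitable separating antichain always exists so that \eqref{eq:treeFissionSubtreeIndependence} applies; the single-vertex marginal, the identification of a downray with $\NatNumZero$, and the finite-to-infinite passage are all routine.
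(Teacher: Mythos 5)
Your proposal is correct and follows essentially the same route as the paper's (very terse) proof: marginal parameter and downray $k$-independence from \eqref{eq:treeFissionPathMarginal}, $s=0$ from the subtree independence \eqref{eq:treeFissionSubtreeIndependence}, and automorphism-invariance quoted from Proposition \ref{prop:treeFission}. Your version simply unpacks the details (the isometric identification of a downray with $\NatNumZero$, the finite-dimensional reduction, and the separating-antichain bookkeeping for incomparable vertex sets) that the paper leaves implicit.
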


\begin{proof}
The definition of $\nu$ implies that it is the law of a rooted site percolation which is invariant under automorphisms of the rooted tree. $k$-independence and the fact that $\nu(Y_v=1)=p$ follow from \eqref{eq:treeFissionPathMarginal}, while $s=0$ follows from the independence over disjoint subtrees in \eqref{eq:treeFissionSubtreeIndependence}.
\end{proof}
\subsubsection{The canonical model}\label{sec:canonical}
For $p\ge\PShearerKZ$ we derive a $k,0$-independent percolation model from $\ShearerMeasure{\FuzzKZ}{p}$. It does not percolate for small $\BranchingNumberT$ if $p$ is smaller than the $\ExplicitGK$ part of \eqref{eq:secondMomentBound}, leading to a lower bound on $\PMaxV{k,0}$.

\begin{Mod}\label{mod:canonical}
Let $k\in\NatNum$, $p\ge\PShearerKZ$ and $\mathcal{Z}:=\Set{\mathcal{Z}_n}_{n\in\NatNumZero}$ be $\ShearerMeasure{\FuzzKN}{p}$-distributed (shifting indices by $1$). Define the \emph{canonical model of $k$-independent site percolation with parameter $p$}, abbreviated $\PercolationCanonicalK{p}$, as the $\Tree$-fission of $\mathcal{Z}$.
\end{Mod}

\begin{Rem}
We named our canonical model after the canonical model of \NameBalisterBollobas{} \cite{BalisterBollobas_onepercolation}. Their model is a bond percolation model, whose limit case is defined in the following way: for $p\ge\frac{3}{4}$ let $\TheXi\ge\frac{1}{2}$ be the unique solution of $1-p=\TheXi(1-\TheXi)$ (compare with \eqref{eq:theXi}). Define the bond percolation $Z:=\Set{Z_e}_{e\in E}$ by
\begin{equation}\label{eq:canonicalModelBBDefinition}
	Z_e:=1-(1-X_{\Parent{v}})X_v\,,
\end{equation}
where $e:=(\Parent{v},v)$. See also figure \ref{fig:canonicalModelBB}. Hence it has dependency parameters $k=s=1$. We see that $Y_e$ is closed iff $(X_{\Parent{v}},X_v)=(0,1)$ and comparing it with \eqref{eq:shearerKFuzzZConstruction} we deduce that it is $\ShearerMeasure{\IntNum}{p}$-distributed along downrays. \NameBalisterBollobas{} do not mention this link explicitly, though. They not only use this model in its role as nonpercolating counterexample for a lower bound on $\PMaxE{1}$, as we do with our canonical model in proposition \ref{prop:canonicalModelFacts}, but also show that it has the smallest probability to percolate among all percolations in $\PercolationClassRooted{p}{o}{1}{1}{E}$, their equivalent to our calculations in section \ref{sec:upperBoundPMax}.\\

\NameBalisterBollobas{}' explicit construction is easily generalizable to bond models with higher $k$, but only for $s\ge 2k-1$. Furthermore their inductive approach fails us already for $k\ge 2$. Thus its main inspiration has been to look for $k,0$-independent percolation models being $\ShearerMeasure{\FuzzKZ}{p}$-distributed along all downrays, leading to the tree-fission and our construction in model \ref{mod:canonical}.
\end{Rem}

\begin{figure}[!ht]
\begin{center}
\begin{displaymath}
	\MxyCanonicalModelSupportBB
\end{displaymath}
\end{center}
\caption[\NameBalisterBollobas{}' canonical model]{Construction of \NameBalisterBollobas{}' canonical model. See \eqref{eq:canonicalModelBBDefinition}.}
\label{fig:canonicalModelBB}
\end{figure}

\begin{Prop}\label{prop:canonicalModelFacts}
For all $k\in\NatNum:\PercolationCanonicalK{p}\in\PercolationClassRooted{p}{o}{k}{0}{V}$. If $\BranchingNumberT\le\frac{k+1}{k}$ and $p\in\left[\PShearerKZ,\ExplicitGK(\BranchingNumberT)\right[$, then $\PercolationCanonicalK{p}$ does not percolate. This implies that
\begin{equation}\label{eq:canonicalModelBound}
	\ForAll k\in\NatNum,\, \BranchingNumberT\in\left[1,\frac{k+1}{k}\right[\,:\quad
	\PMaxV{k,0}\ge\ExplicitGK(\BranchingNumberT)\,.
\end{equation}
\end{Prop}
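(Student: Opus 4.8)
The statement has three parts, which I would establish in turn. Membership $\PercolationCanonicalK{p}\in\PercolationClassRooted{p}{o}{k}{0}{V}$ is immediate: by model \ref{mod:canonical}, $\PercolationCanonicalK{p}$ is the $\Tree$-fission of a $\ShearerMeasure{\FuzzKN}{p}$-distributed field $\mathcal{Z}$, which for $p\ge\PShearerKZ=\PShearerKN$ is $k$-independent with marginal parameter $p$, so corollary \ref{cor:treeFissionInRootedClass} applies directly. The substantive claim is nonpercolation, for which I would run a first moment argument (lemma \ref{lem:firstMomentMethod}) fed by the sharp asymptotics of Shearer's critical function on $\FuzzKL{n}$.

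Fix $p\in[\PShearerKZ,\ExplicitGK(\BranchingNumberT)[$ with $\BranchingNumberT\le\frac{k+1}{k}$, and let $\TheXi=\TheXi(p,k)\in[\frac{k}{k+1},1]$ solve \eqref{eq:theXi}. First I would identify the connectivity probabilities: since in a tree $o\ConnectedTo v$ means every vertex of $\TreePath{o}{v}$ is open, the path-marginal property \eqref{eq:treeFissionPathMarginal} gives $\Proba(o\ConnectedTo v)=\Proba(Z_{\TreePath{o}{v}}=\vec{1})=\ShearerCriticalFunction{\FuzzKL{\NodeLevel{v}+1}}(p)$, because a discrete interval of length $\ell$ induces a copy of $\FuzzKL{\ell}$ inside $\FuzzKN$ and $\ShearerMeasure{\FuzzKN}{p}$ restricted to it is $\ShearerMeasure{\FuzzKL{\ell}}{p}$. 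Next, the majoration \eqref{eq:shearerKFuzzZCriticalFunctionMajoration} yields, for each $\varepsilon>0$, a constant $C=C(\varepsilon)$ with $\ShearerCriticalFunction{\FuzzKL{n}}(p)\le C[(1+\varepsilon)\TheXi]^n$ for all $n$ (the finitely many small $n$ absorbed into $C$). Then I would relate $\TheXi$ to $\BranchingNumberT$: the proof of proposition \ref{prop:secondMomentBound} records that $\ExplicitGK(\tfrac{1}{\TheXi})=p$ and that $\ExplicitGK$ is strictly decreasing on $[1,\frac{k+1}{k}]$; since $\frac{1}{\TheXi}$ and $\BranchingNumberT$ both lie there and $p<\ExplicitGK(\BranchingNumberT)$, this forces $\BranchingNumberT<\frac{1}{\TheXi}$. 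Hence I may fix $\varepsilon>0$ so small that $\lambda:=\frac{1}{(1+\varepsilon)\TheXi}>\BranchingNumberT$ (which automatically keeps $(1+\varepsilon)\TheXi<1$, needed above). Now the first moment method and the cutset form of the branching number \eqref{eq:branchingNumber} give
\[
	\Proba(O_o)\le\inf_{\Pi\in\CutsetsOf{o}}\sum_{v\in\Pi}\Proba(o\ConnectedTo v)
	\le C\inf_{\Pi\in\CutsetsOf{o}}\sum_{v\in\Pi}\lambda^{-\NodeLevel{v}}=0\,,
\]
since $\lambda>\BranchingNumberT$.

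Running the same estimate inside each subtree $\SubtreeRootedAt{\Tree}{v}$ — whose branching number is at most $\BranchingNumberT$, and in which $\Proba(v\ConnectedTo w)=\ShearerCriticalFunction{\FuzzKL{\NodeLevel{w}-\NodeLevel{v}+1}}(p)$ again by \eqref{eq:treeFissionPathMarginal} together with the shift-invariance of Shearer's measure ($\ShearerMeasure{\FuzzKN}{p}$ being the projection of the stationary $(k+1)$-factor $\ShearerMeasure{\FuzzKZ}{p}$ of model \ref{mod:shearerKFuzzZ}) — gives $\Proba(O_v)=0$ for every $v\in V$. Since $\PercolationClassRooted{p}{o}{k}{0}{V}\subseteq\PercolationClass{p}{k}{V}$, lemma \ref{lem:percolationRootedVsUnrooted} then shows that $\PercolationCanonicalK{p}$ does not percolate. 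Finally, for \eqref{eq:canonicalModelBound}: the proof of proposition \ref{prop:secondMomentBound} also records that $\ExplicitGK$ attains its global minimum $\PShearerKZ$ only at $\frac{k+1}{k}$, so for $\BranchingNumberT\in[1,\frac{k+1}{k}[$ the interval $[\PShearerKZ,\ExplicitGK(\BranchingNumberT)[$ is nonempty; by the above, every such $p$ admits a non-percolating percolation in $\PercolationClassRooted{p}{o}{k}{0}{V}$, so $\PMaxV{k,0}\ge p$ by \eqref{eq:rootedpMaxDef}, and letting $p\uparrow\ExplicitGK(\BranchingNumberT)$ gives $\PMaxV{k,0}\ge\ExplicitGK(\BranchingNumberT)$.

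The one delicate point is the uniform exponential estimate $\Proba(o\ConnectedTo v)\le C\lambda^{-\NodeLevel{v}}$ with $\lambda>\BranchingNumberT$: this is exactly where the sharp asymptotics \eqref{eq:shearerKFuzzZCriticalFunctionMajoration} of Shearer's critical function, and the strict inequality $p<\ExplicitGK(\BranchingNumberT)$ (which buys the $\varepsilon$-room to beat $\BranchingNumberT$), are both indispensable; everything else is the routine first-moment / branching-number dictionary, and passing from the root $o$ to an arbitrary $v$ only costs the shift-invariance of Shearer's measure.
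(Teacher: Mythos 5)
Your proof is correct and follows essentially the same route as the paper's: membership via corollary \ref{cor:treeFissionInRootedClass}, then the first moment method (lemma \ref{lem:firstMomentMethod}) combined with the subexponential majoration \eqref{eq:shearerKFuzzZCriticalFunctionMajoration} of $\ShearerCriticalFunction{\FuzzKL{n}}(p)$ and the observation that $p<\ExplicitGK(\BranchingNumberT)$ is equivalent to $\TheXi<\BranchingNumberTReciprocal$, which buys the $\varepsilon$-room to beat the branching number. The only thing you do beyond the paper's write-up is to spell out the passage from $\Proba(O_o)=0$ to nonpercolation proper by observing, via lemma \ref{lem:percolationRootedVsUnrooted} and the shift-invariance of $\ShearerMeasure{\FuzzKN}{p}$ (inherited from the stationary factor on $\FuzzKZ$), that the same bound applies at every vertex; the paper leaves this implicit.
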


\begin{proof}
As $\mathcal{Z}$ from model \ref{mod:canonical} is $k$-independent and has marginal parameter $p$ corollary \ref{cor:treeFissionInRootedClass} asserts that $\PercolationCanonicalK{p}\in\PercolationClassRooted{p}{o}{k}{0}{V}$.\\

Remember that $p<\ExplicitGK(\BranchingNumberT)$ is equivalent to $\TheXi<\BranchingNumberTReciprocal$, hence we can choose $\varepsilon>0$ such that $(1+\varepsilon)\TheXi<\BranchingNumberTReciprocal$. The first moment method (lemma \ref{lem:firstMomentMethod}) yields
\begin{align*}
	&\FirstAlign \Proba(o\ConnectedTo\infty)\\
	&\le \inf_{\Pi\in\CutsetsOf{o}} \sum_{v\in\Pi}
		\Proba(o\ConnectedTo v)\\
	&\le \inf_{\Pi\in\CutsetsOf{o}} \sum_{v\in\Pi}
		C [(1+\varepsilon)\TheXi]^{\NodeLevel{v}+1}
		&\text{by \eqref{eq:shearerKFuzzZCriticalFunctionMajoration}}\\
	&= C(1+\varepsilon)\TheXi
		\inf_{\Pi\in\CutsetsOf{o}} \sum_{v\in\Pi}
		\left[
			\frac{1}{(1+\varepsilon)\TheXi}
		\right]^{-\NodeLevel{v}}\\
	&=0
		&\text{by definition of $\BranchingNumberT$ in \eqref{eq:branchingNumber}.}
\end{align*}
Therefore $\PercolationCanonicalK{p}$ does not percolate and \eqref{eq:canonicalModelBound} follows directly.
\end{proof}
\subsubsection{The cutup model}\label{sec:cutup}
For $N\in\NatNum$ and $\PShearerKL{N}<\PShearerKZ$ we derive a $k,0$-independent percolation model from $\ShearerMeasure{\FuzzKL{N}}{\PShearerKL{N}}$. It never percolates. In the limit $N\to\infty$ this yields a lower bound of $\PShearerKZ$ for $\PMaxV{k,0}$.

\begin{Mod}\label{mod:cutup}
Let $k,N\in\NatNum$ and $\mathcal{Z}:=\Set{\mathcal{Z}_n}_{n\in\NatNumZero}$ be distributed like independent copies of $\ShearerMeasure{\FuzzKL{N}}{\PShearerKL{N}}$ on $\Set{mN,mN+1,\dotsc,(m+1)N-1}$ for all $m\in\NatNumZero$. Define the \emph{$N$-cutup model of $k$-independent site percolation}, abbreviated $\PercolationCutNK$, as the $\Tree$-fission of $\mathcal{Z}$.
\end{Mod}

\begin{Prop}\label{prop:cutupModelFacts}
For all $k,N\in\NatNum: \PercolationCutNK\in\PercolationClassRooted{\PShearerKL{N}}{o}{k}{0}{V}$. It has percolation cluster diameters \AlmostSureLy{} bounded by $4N-4$. Hence it does not percolate. This implies that
\begin{equation}\label{eq:cutupModelBound}
	\ForAll k\in\NatNum:\quad
	\PMaxV{k,0}\ge\PShearerKZ\,.
\end{equation}
\end{Prop}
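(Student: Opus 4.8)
The plan is to verify the three claims in turn: membership in the rooted class, the diameter bound, and the consequence for $\PMaxV{k,0}$. The first claim is immediate from the tree-fission machinery. Since $\ShearerMeasure{\FuzzKL{N}}{\PShearerKL{N}}$ is a probability measure (recall $\ShearerCriticalFunction{\FuzzKL{N}}(\PShearerKL{N})=0$ forces us to take $p=\PShearerKL{N}$ exactly, which is the boundary case where \NameShearersMeasure{} is still a well-defined probability measure with marginal parameter $\PShearerKL{N}$), the field $\mathcal{Z}$ obtained by gluing independent copies on the blocks $\Set{mN,\dotsc,(m+1)N-1}$ is a \NameBernoulli{} field with marginal parameter $\PShearerKL{N}$. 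Two indices more than $k$ apart are independent: if they lie in the same block this is the $k$-independence of \NameShearersMeasure{} on $\FuzzKL{N}$, and if they lie in different blocks they are independent by construction. Hence $\mathcal{Z}$ is $k$-independent with marginal parameter $\PShearerKL{N}$, and corollary \ref{cor:treeFissionInRootedClass} gives $\PercolationCutNK\in\PercolationClassRooted{\PShearerKL{N}}{o}{k}{0}{V}$.

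The heart of the argument is the diameter bound. The key structural fact is that within each block of length $N$, \NameShearersMeasure{} on $\FuzzKL{N}$ at the critical parameter $\PShearerKL{N}$ forbids the all-ones configuration: $\ShearerCriticalFunction{\FuzzKL{N}}(\PShearerKL{N})=0$, so $\ShearerMeasure{\FuzzKL{N}}{\PShearerKL{N}}(Y_{\IntegerSet{N}}=\vec{1})=0$, i.e.\ \AlmostSureLy\ every block of $N$ consecutive indices of $\mathcal{Z}$ contains a $0$. By the path-marginal property \eqref{eq:treeFissionPathMarginal}, along any downray from $o$ in $\PercolationCutNK$ the states are distributed as $\Set{\mathcal{Z}_{\NodeLevel{w}}}_w$, so \AlmostSureLy\ along every downray every window of $N$ consecutive levels contains a closed site. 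I would then argue: a connected open cluster $C$ in $\Tree$ has a top vertex $v_0$ (the one of minimal level, which is unique since $C$ is connected in a tree and lies in $\SubtreeRootedAt{\Tree}{v_0}$ once we root appropriately). Any vertex $u\in C$ is joined to $v_0$ by the open geodesic $\TreePath{v_0}{u}$. Along this downpath, which is a subpath of a downray, no window of $N$ levels is all-open, yet all sites on it are open — so the path has length at most $N-1$, giving $\NodeLevel{u}-\NodeLevel{v_0}\le N-1$. Hence $C$ is contained in the union of the at most $N-1$ levels below $v_0$, and the longest geodesic in $C$ — which passes through at most one confluent and has two downward legs each of length $\le N-1$ — has length at most $2(N-1)$; a slightly more careful count accounting for the two legs possibly descending from different children of the confluent still gives the stated $4N-4$. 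Since all clusters have diameter $\le 4N-4<\infty$, none is infinite, so $\PercolationCutNK$ does not percolate.

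Finally, the lower bound \eqref{eq:cutupModelBound} follows by a limiting argument exactly as in the remark preceding model \ref{mod:cutup}: by proposition \ref{prop:pShearerKFuzzZ} we have $\PShearerKL{N}\nearrow\PShearerKZ$, so $\sup_N\PShearerKL{N}=\PShearerKZ$. For every $N$ the model $\PercolationCutNK$ is a $k,0$-independent percolation with parameter $\PShearerKL{N}$ that does not percolate, witnessing $\PMaxV{k,0}\ge\PShearerKL{N}$ by definition \eqref{eq:rootedpMaxDef} of $\PMaxV{k,0}$ (combined with the root-independence already established, or simply reading off the definition for the fixed root $o$). Letting $N\to\infty$ yields $\PMaxV{k,0}\ge\PShearerKZ$.

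\textbf{Main obstacle.} The routine parts are membership and the limiting step; the delicate point is pinning down the exact constant $4N-4$ in the diameter bound. Getting $2(N-1)$ is straightforward once one knows every $N$-window of an open downpath is impossible, but the factor-$4$ version presumably comes from the fact that in the $\Tree$-fission the ``no all-ones $N$-window'' property holds along downrays starting at $o$, not along arbitrary geodesics, so when one re-roots at the cluster top $v_0$ one loses synchronization with the original level-labelling and must allow each of the two downward legs (and possibly a short stretch near the confluent) to be as long as $2N-2$ rather than $N-1$. I would handle this by carefully tracking how the level function shifts under the change of root and bounding each leg by $2(N-1)$, then summing. The probabilistic content — that $\ShearerCriticalFunction{\FuzzKL{N}}(\PShearerKL{N})=0$ kills the all-ones block — is the only input really needed, and it is already available from section \ref{sec:shearerMeasure}.
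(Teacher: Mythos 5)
Your overall structure matches the paper's: membership via corollary \ref{cor:treeFissionInRootedClass}, a diameter bound from the no-all-ones property of $\ShearerMeasure{\FuzzKL{N}}{\PShearerKL{N}}$, and the limiting step $\PShearerKL{N}\to\PShearerKZ$. The membership and limiting arguments are correct.

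There is, however, a concrete error in the diameter argument. You assert that ``along every downray every window of $N$ consecutive levels contains a closed site.'' This is false. The field $\mathcal{Z}$ is built from \emph{independent} copies of $\ShearerMeasure{\FuzzKL{N}}{\PShearerKL{N}}$ on the \emph{fixed} blocks $\Set{mN,\dotsc,(m+1)N-1}$, so the only guaranteed zeros are one per fixed block; a sliding window of length $N$ that straddles a block boundary can perfectly well be all ones. Concretely, if in block $m$ the unique zero sits at position $mN$ and in block $m+1$ it sits at position $(m+2)N-1$, then the stretch $\Set{mN+1,\dotsc,(m+2)N-2}$ of length $2N-2$ is all-open. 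This is exactly the paper's observation: $\mathcal{Z}$ ``blocks going more than $2N-2$ steps'' along a downray, not $N-1$. So the correct per-leg bound for $\NodeLevel{u}-\NodeLevel{v_0}$ (with $v_0$ the top vertex and $u$ any cluster vertex) is $2N-2$, and summing two legs through the confluent immediately gives diameter $\le 4N-4$ — no re-rooting or desynchronization argument is needed or appropriate, since the downpath from $v_0$ to $u$ is already a subpath of a downray from $o$ and hence carries the original level-labelling. Your attempted patch (attributing the factor $2$ to a loss of synchronization after re-rooting at $v_0$) is not the right fix; the doubling comes entirely from the block structure being fixed rather than sliding. With this correction the rest of your proof goes through exactly as written.
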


\begin{Rem}
It is possible to generate models like the cutup model for every $p<\PShearerKZ$ \cite[proof of theorem 1]{Shearer_problem}.
\end{Rem}

\begin{proof}
As $\mathcal{Z}$ from model \ref{mod:cutup} is $k$-independent and has marginal parameter $p$ corollary \ref{cor:treeFissionInRootedClass} asserts that $\PercolationCutNK\in\PercolationClassRooted{\PShearerKL{N}}{o}{k}{0}{V}$.\\

To bound cluster diameters note that $\ShearerMeasure{\FuzzKL{N}}{\PShearerKL{N}}$ blocks going more than $2N-2$ steps up or down along a downray. Hence cluster diameters are \AlmostSureLy{} bounded by $4N-4$ and $\PercolationCutNK$ does not percolate. Thus $\PMaxV{k,0}\ge\PShearerKL{N}$. Finally we know from \eqref{eq:pShearerKFuzzZ} that $\PShearerKL{N}\xrightarrow[N\to\infty]{}\PShearerKZ$.
\end{proof}
\subsection{Determining \texorpdfstring{$\PMinV{k,s}$}{the minimal p}}
\label{sec:determiningPMin}
To determine $\PMinV{k,s}$ we take the opposite approach from $\PMaxV{k,s}$. For a uniform lower bound we use the first moment method in proposition \ref{prop:firstMomentBound} on percolations with small enough $p$. An upper bound follows from the so-called minimal model \ref{mod:minimal}, again built by tree-fission from section \ref{sec:treeFission}. We show that it percolates for sufficiently high $p$ employing the sufficient conditions on the percolation kernel from section \ref{sec:percolationKernelEstimates}, effectively using the second moment method.

\begin{Prop}\label{prop:firstMomentBound}
\begin{equation}\label{eq:firstMomentBound}
\ForAll k\in\NatNumZero, s\in\NatNumZero\uplus\Set{\infty}:\quad
	\PMinV{k,s}\ge\frac{1}{\BranchingNumberT^{k+1}}\,.
\end{equation}
\end{Prop}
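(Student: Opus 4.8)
\emph{Proof plan.} The plan is a first-moment argument that uses only $k$-independence along downrays, so that it is insensitive to $s$ (including $s=\infty$). By Lemma~\ref{lem:percolationRootedVsUnrooted}, a percolation $\Percolation=\Set{Z_v}_{v\in V}\in\PercolationClassRooted{p}{o}{k}{s}{V}$ fails to percolate as soon as $\Proba(O_v)=0$ for every $v\in V$; so, in view of the definition~\eqref{eq:rootedpMinDef}, it suffices to fix $p<\frac{1}{\BranchingNumberT^{k+1}}$, a root $o$, a percolation $\Percolation\in\PercolationClassRooted{p}{o}{k}{s}{V}$ with $s\in\NatNumZero\uplus\Set{\infty}$, and a vertex $v$, and to prove $\Proba(O_v)=0$. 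Since $O_v=\Set{v\ConnectedTo\TreeBoundary{\SubtreeRootedAt{\Tree}{v}}}$ only concerns $\SubtreeRootedAt{\Tree}{v}$, and the restriction of $\Percolation$ there is still $k$-independent along its downrays (these are tails of downrays from $o$), I may work inside $\SubtreeRootedAt{\Tree}{v}$ rooted at $v$. I will also use $\BranchingNumber{\SubtreeRootedAt{\Tree}{v}}\le\BranchingNumberT$, which holds because a nonzero $\lambda$-flow on $\SubtreeRootedAt{\Tree}{v}$, rescaled by $\lambda^{-\NodeLevel{v}}$ and extended by routing it up the geodesic $\TreePath{o}{v}$, becomes a nonzero $\lambda$-flow on $\Tree$.

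The core estimate bounds $\Proba(v\ConnectedTo w)$ for a descendant $w$ of $v$ at distance $n:=\GraphMetric{v}{w}$. On a tree $v\ConnectedTo w$ is exactly the event $\Set{Z_{\TreePath{v}{w}}=\vec{1}}$, and among the $n+1$ vertices of $\TreePath{v}{w}$ those at distances $0,k+1,2(k+1),\dotsc$ from $v$ — there are $\Floor{n/(k+1)}+1$ of them — lie on one downray and are pairwise more than $k$ apart, hence form an independent Bernoulli($p$) family. Therefore
\[
	\Proba(v\ConnectedTo w)\le p^{\Floor{n/(k+1)}+1}\le p^{n/(k+1)}\,.
\]
Only downray-independence entered here, which is exactly why the bound, and so the whole conclusion, survives $s=\infty$.

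To finish, set $\lambda:=p^{-1/(k+1)}$, so that $p^{n/(k+1)}=\lambda^{-n}$. The hypothesis $p<\frac{1}{\BranchingNumberT^{k+1}}$ gives $\lambda>\BranchingNumberT\ge\BranchingNumber{\SubtreeRootedAt{\Tree}{v}}$, so the cutset characterization of the branching number in~\eqref{eq:branchingNumber}, applied inside $\SubtreeRootedAt{\Tree}{v}$, yields $\inf_{\Pi\in\CutsetsOf{v}}\sum_{w\in\Pi}\lambda^{-\GraphMetric{v}{w}}=0$. Combining this with the first moment method (Lemma~\ref{lem:firstMomentMethod}) in $\SubtreeRootedAt{\Tree}{v}$,
\[
	\Proba(O_v)\le\inf_{\Pi\in\CutsetsOf{v}}\sum_{w\in\Pi}\Proba(v\ConnectedTo w)\le\inf_{\Pi\in\CutsetsOf{v}}\sum_{w\in\Pi}\lambda^{-\GraphMetric{v}{w}}=0\,,
\]
which is what was needed; hence $\PMinV{k,s}\ge\frac{1}{\BranchingNumberT^{k+1}}$. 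For $k=0$ this reproduces \NameLyons{}' bound from Theorem~\ref{thm:valueOfCriticalValuesZero}.

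I expect no serious obstacle: the only two points requiring care are that one must kill $\Proba(O_v)$ for \emph{every} $v$ rather than just $v=o$ (handled by descending into $\SubtreeRootedAt{\Tree}{v}$ and invoking monotonicity of the branching number), and that $s$ is allowed to be $\infty$ (harmless, since the first-moment method on a tree only ever evaluates the probability that a single geodesic — necessarily a subpath of one downray — is entirely open).
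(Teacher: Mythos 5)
Your proof is correct and follows essentially the same route as the paper: the first moment method over cutsets, keyed on the downray estimate $\Proba(v\ConnectedTo w)\le p^{\GraphMetric{v}{w}/(k+1)}$ obtained by selecting vertices spaced $k+1$ apart along the geodesic, which uses only $k$-independence along downrays and is therefore insensitive to $s$. You are in fact a bit more careful than the paper's own proof, which only bounds $\Proba(O_o)$ and then asserts non-percolation; your explicit descent into each $\SubtreeRootedAt{\Tree}{v}$ (using $\BranchingNumber{\SubtreeRootedAt{\Tree}{v}}\le\BranchingNumberT$) supplies the step for an arbitrary starting vertex that the paper leaves implicit.
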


\begin{proof}
Let $\Percolation\in\PercolationClassRooted{p}{o}{k}{s}{V}$ with $p<\frac{1}{\BranchingNumberT^{k+1}}$. Then the first moment method (lemma \ref{lem:firstMomentMethod}) results in
\begin{align*}
	&\FirstAlign \Proba(o\ConnectedTo\infty)\\
	&\le \inf_{\Pi\in\CutsetsOf{o}}
		\sum_{v\in\Pi} \Proba(o\ConnectedTo v)\\
	&\le \inf_{\Pi\in\CutsetsOf{o}}
		\sum_{v\in\Pi} p^{\FracCeiling{\NodeLevel{v}}{k+1}}
		&k\text{-independence along downrays}\\
	&\le \inf_{\Pi\in\CutsetsOf{o}}
		\sum_{v\in\Pi}
		\left(p^{-\frac{1}{k+1}}\right)^{-(k+1)\FracCeiling{\NodeLevel{v}}{k+1}}\\
	&\le \inf_{\Pi\in\CutsetsOf{o}}
		\sum_{v\in\Pi}
		\left(p^{-\frac{1}{k+1}}\right)^{-\NodeLevel{v}}
		&\text{as }(k+1)\FracCeiling{\NodeLevel{v}}{k+1}>\NodeLevel{v}\\
	&=0
		&\text{as }\BranchingNumberT<p^{-\frac{1}{k+1}}\,.
\end{align*}
Hence $\Percolation$ does not percolate and \eqref{eq:firstMomentBound} follows trivially.
\end{proof}

\begin{Mod}\label{mod:minimal}
Let $X:=\Set{X_n}_{n\in\NatNumZero}$ be an \Iid{} \NameBernoulli{} field with parameter $\hat{p}:=p^{1/(k+1)}$. Define $\mathcal{Z}:=\Set{\mathcal{Z}_n}_{n\in\NatNumZero}$ by $\ForAll n\in\NatNumZero: \mathcal{Z}_n := \prod_{i=0}^k X_{n+i}$. Define the \emph{minimal model of $k$-independent site percolation with parameter $p$}, abbreviated $\PercolationMinimalK{p}$, as the $\Tree$-fission of $\mathcal{Z}$.
\end{Mod}

\begin{Prop}\label{prop:minimalModelFacts}
For all $k\in\NatNum: \PercolationMinimalK{p}\in\PercolationClassRooted{p}{o}{k}{0}{V}$. If $p>\frac{1}{\BranchingNumberT^{k+1}}$, then $\PercolationMinimalK{p}$ percolates, which entails that
\begin{equation}\label{eq:minimalModelBound}
	\ForAll k\in\NatNum:\quad
	\PMinV{k,0}\le\frac{1}{\BranchingNumberT^{k+1}}\,.
\end{equation}
\end{Prop}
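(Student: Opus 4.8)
The class-membership assertion is immediate: the identity $\mathcal{Z}_n=\prod_{i=0}^k X_{n+i}$ exhibits $\mathcal{Z}$ as a $(k+1)$-factor of the \Iid{} sequence $X$, hence as a $k$-independent field with marginal parameter $\Proba(\mathcal{Z}_n=1)=\hat{p}^{k+1}=p$; corollary~\ref{cor:treeFissionInRootedClass} then gives $\PercolationMinimalK{p}\in\PercolationClassRooted{p}{o}{k}{0}{V}$.

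For the percolation statement, assume $p>\frac{1}{\BranchingNumberT^{k+1}}$, so that $\hat{p}\in(0,1]$ and $\hat{p}^{-1}=p^{-1/(k+1)}<\BranchingNumberT$. The plan is to verify the hypothesis of proposition~\ref{prop:secondMomentMethodTreeAdaption} by feeding an explicit computation in the $\Tree$-fission model into the kernel estimate of proposition~\ref{prop:ksIndependentKernelBound}. With the notation of figure~\ref{fig:percolationKernel} and $u:=v\Confluent w$, proposition~\ref{prop:ksIndependentKernelBound} gives $\PercolationKernel(v,w)\le1/\Proba(o\ConnectedTo t\mid t\ConnectedTo w)$, where $t$ lies on $\TreePath{u}{w}$ with $\NodeLevel{t}\le\NodeLevel{u}+k+1$ (including the degenerate case $t=w$).

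The crux is to evaluate this conditional probability. In a tree the events $\{o\ConnectedTo t\}$ and $\{t\ConnectedTo w\}$ are ``all of $\TreePath{o}{t}$ is open'' and ``all of $\TreePath{t}{w}$ is open'', and since $t$ lies on $\TreePath{o}{w}$ their intersection is ``all of $\TreePath{o}{w}$ is open''. Using the path-marginal law~\eqref{eq:treeFissionPathMarginal} of the $\Tree$-fission together with the equivalence that $\mathcal{Z}_a=\dots=\mathcal{Z}_b=1$ holds iff $X_a=\dots=X_{b+k}=1$, one obtains $\Proba(o\ConnectedTo t,\,t\ConnectedTo w)=\hat{p}^{\NodeLevel{w}+k+1}$ and $\Proba(t\ConnectedTo w)=\hat{p}^{\NodeLevel{w}-\NodeLevel{t}+k+1}$, hence $\Proba(o\ConnectedTo t\mid t\ConnectedTo w)=\hat{p}^{\NodeLevel{t}}$ (the case $t=w$ giving the same value). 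Since $0<\hat{p}\le1$ and $\NodeLevel{t}\le\NodeLevel{u}+k+1$, this yields
\[
  \PercolationKernel(v,w)\le\hat{p}^{-\NodeLevel{t}}\le\hat{p}^{-(k+1)}\bigl(\hat{p}^{-1}\bigr)^{\NodeLevel{v\Confluent w}}=p^{-1}\bigl(p^{-1/(k+1)}\bigr)^{\NodeLevel{v\Confluent w}}.
\]
Applying proposition~\ref{prop:secondMomentMethodTreeAdaption} with $C:=p^{-1}$ and $\alpha:=p^{-1/(k+1)}<\BranchingNumberT$ shows that $\PercolationMinimalK{p}$ percolates; as this holds for every $p>\frac{1}{\BranchingNumberT^{k+1}}$, definition~\eqref{eq:rootedpMinDef} yields $\PMinV{k,0}\le\frac{1}{\BranchingNumberT^{k+1}}$, i.e.\ \eqref{eq:minimalModelBound}.

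The only genuine work is the explicit evaluation of $\Proba(o\ConnectedTo t\mid t\ConnectedTo w)$ in the $\Tree$-fission model: one must translate the two connectivity events along $\TreePath{o}{w}$ into events on the underlying \Iid{} field $X$, keep the index bookkeeping straight at the junction vertex $t$, and check that the degenerate case $t=w$ is subsumed by the same computation. Any off-by-one shift in the exponents is harmless, being absorbed into the constant $C$; the remaining steps are direct invocations of results already established.
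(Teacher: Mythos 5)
Your proof is correct and follows essentially the same route as the paper: class membership via corollary~\ref{cor:treeFissionInRootedClass}, then the kernel bound of proposition~\ref{prop:ksIndependentKernelBound} fed into proposition~\ref{prop:secondMomentMethodTreeAdaption} with $C=\hat{p}^{-(k+1)}$ and $\alpha=\hat{p}^{-1}$. The only difference is that you spell out the computation $\Proba(o\ConnectedTo t\mid t\ConnectedTo w)=\hat{p}^{\NodeLevel{t}}$ from the $(k+1)$-factor structure of $\mathcal{Z}$, which the paper leaves implicit.
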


\begin{proof}
As $\mathcal{Z}$ from model \ref{mod:minimal} is $k$-independent and has marginal parameter $p$ corollary \ref{cor:treeFissionInRootedClass} asserts that $\PercolationMinimalK{p}\in\PercolationClassRooted{\PShearerKL{N}}{o}{k}{0}{V}$.\\

Let $Z:=\Set{Z_v}_{v\in V}$ be $\PercolationMinimalK{p}$-distributed and $p>\frac{1}{\BranchingNumberT^{k+1}}$. Looking at model \ref{mod:minimal}, we see that $\Proba(\mathcal{Z}_{\IntegerSet{n}}=\vec{1})=\Proba(X_{\IntegerSet{n+k}}=\vec{1})=\hat{p}^{n+k}$, with $\hat{p}=p^{1/(k+1)}$. Use the notation from figure \ref{fig:percolationKernel} and apply the bound on the percolation kernel \eqref{eq:ksIndependentKernelBound} to arrive at:
\begin{equation*}
	\PercolationKernel(v,w)
	\le \frac{1}{\Proba(o\ConnectedTo t|t\ConnectedTo w)}
	\le \frac{1}{\hat{p}^{\NodeLevel{t}}}
	\le \hat{p}^{-k-1}\hat{p}^{-\NodeLevel{u}}
\end{equation*}

Apply proposition \ref{prop:secondMomentMethodTreeAdaption} with $C:=\hat{p}^{-k-1}$ and $\alpha:=\frac{1}{\hat{p}}<\BranchingNumberT$ to show that we percolate. This proves \eqref{eq:minimalModelBound}.
\end{proof}

\subsection{The connection with quasi-independence}
\label{sec:connectionWithQuasiIndependence}
In this section we show that in both cases (propositions \ref{prop:secondMomentBound} and \ref{prop:minimalModelFacts}) where we apply the second moment method via exponential bounds on the percolation kernel our $k,s$-independent percolations are also quasi-independent \eqref{eq:quasiIndependent}. This gives an a posteriori connection with \NameLyons{}' work and explains why we have been able to exploit percolation kernels so effectively.

\begin{Prop}\label{prop:ksIndependentImpliesQuasiIndependent}
Let $p>\PShearerKZ$. Then $\ForAll\Percolation\in\PercolationClassRooted{p}{o}{k}{s}{V},\ForAll v,w\in V$:
\begin{equation}\label{eq:ksIndependentQuasiIndependentKernelMajoration}
	\PercolationKernel(v,w)
	\le\frac%
		{\TheXi^{k-(k\lor s)}}
		{(k+1)\TheXi-k}
		\times\frac{1}{\Proba(o\ConnectedTo u)}\,,
\end{equation}
hence $\Percolation$ is quasi-independent.
\end{Prop}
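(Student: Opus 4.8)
The plan is to bound the percolation kernel $\PercolationKernel(v,w)$ using proposition \ref{prop:ksIndependentKernelBound} together with the finer lower bound on $\ShearerMeasure{\FuzzKZ}{p}$-conditional probabilities from proposition \ref{prop:shearerKFuzzZEstimates}, and then to feed the result into the definition of quasi-independence \eqref{eq:quasiIndependent}. Concretely, using the notation of figure \ref{fig:percolationKernel}, proposition \ref{prop:ksIndependentKernelBound} gives $\PercolationKernel(v,w)\le\frac{1}{\Proba(o\ConnectedTo t\mid t\ConnectedTo w)}$. The percolation along the downray $\TreePath{o}{t}$ is a $k$-independent \NameBernoulli{} field with marginal parameter $p>\PShearerKZ$, so by the minimality of \NameShearersMeasure{} (lemma \ref{lem:shearerMinimality}, equation \eqref{eq:shearerMinimalityOVOEP}) and the fact that $t$ sits exactly $(k\lor s)+1$ steps below $u$, the conditional probability $\Proba(o\ConnectedTo t\mid t\ConnectedTo w)$ decomposes along $\TreePath{o}{t}$ and is bounded below by the corresponding \NameShearer{} conditional probabilities. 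One factors this as $\Proba(o\ConnectedTo u)$ (the part of the path from $o$ down to $u$) times the conditional probability of extending from $u$ by the $(k\lor s)+1$ further steps to $t$, again conditioned on $t\ConnectedTo w$.

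The key point is to control that last factor — the probability of keeping the downray open for $(k\lor s)+1$ more steps below $u$ — by the quantity $\frac{(k+1)\TheXi-k}{\TheXi^{k-(k\lor s)}}$ appearing in \eqref{eq:ksIndependentQuasiIndependentKernelMajoration}. This is precisely where proposition \ref{prop:shearerKFuzzZEstimates} comes in: the minoration \eqref{eq:shearerKFuzzZOVOEPMinoration} gives, for a one-step extension conditioned on $1$s on a block of the preceding $d$ positions (with $d\le k$), the lower bound $\ExplicitFK(k+1-d)=\frac{(k+2-d)\TheXi-(k+1-d)}{(k+1-d)\TheXi-(k-d)}$, and multiplying these telescoping fractions over the relevant stretch of the path (the last $k$ of the $(k\lor s)+1$ steps, where conditioning on the downward $1$s below $t$ still has an effect, the earlier $(k\lor s)+1-k$ steps contributing a factor $\TheXi$ each via \eqref{eq:shearerKFuzzZEscapingOVOEPMinoration}) collapses to $(k+1)\TheXi-k$ divided by $\TheXi^{(k\lor s)-k}$ when $s\ge k$, or is handled symmetrically when $s<k$. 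Taking reciprocals yields \eqref{eq:ksIndependentQuasiIndependentKernelMajoration}. (One should also treat separately the degenerate case $\GraphMetric{u}{w}\le(k\lor s)+1$, i.e. $t=w$, as in the proof of proposition \ref{prop:ksIndependentKernelBound}, where the bound is even easier.)

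Finally, to conclude quasi-independence: by \eqref{eq:percolationKernelQuasiIndependentMajoration} the inequality $\PercolationKernel(v,w)\le\frac{M}{\Proba(o\ConnectedTo u)}$ for the constant $M:=\frac{\TheXi^{k-(k\lor s)}}{(k+1)\TheXi-k}$ (which is finite and positive since $p>\PShearerKZ$ forces $\TheXi>\frac{k}{k+1}$, so $(k+1)\TheXi-k>0$) is exactly the statement that $\Percolation$ is quasi-independent with this $M$. The main obstacle I anticipate is the bookkeeping in the second paragraph: tracking exactly how many of the $(k\lor s)+1$ extension steps below $u$ are still "seeing" the conditioning block near $t$ versus being effectively unconditioned, and verifying that the telescoping product of the $\ExplicitFK$ values yields precisely $(k+1)\TheXi-k$ in the numerator rather than something off by one index — this is the same kind of telescoping already carried out in the final case of the proof of proposition \ref{prop:shearerKFuzzZEstimates}, so it should go through, but it requires care with the $k\lor s$ versus $k$ distinction.
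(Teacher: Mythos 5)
Your approach is essentially the paper's: bound $\PercolationKernel(v,w)$ via proposition \ref{prop:ksIndependentKernelBound}, factor $\Proba(o\ConnectedTo t\mid t\ConnectedTo w)=\Proba(u\ConnectedTo t\mid o\ConnectedTo u,t\ConnectedTo w)\,\Proba(o\ConnectedTo u)$ (using $(k\lor s)$-independence for the second factor), lower-bound the first factor by \NameShearer{} minimality \eqref{eq:shearerMinimalityOVOEP} plus the explicit $\ExplicitFK$ minoration \eqref{eq:shearerKFuzzZOVOEPMinoration}, telescope, and invoke \eqref{eq:percolationKernelQuasiIndependentMajoration}. The one genuine slip is in your second paragraph: the lower bound you obtain on $\Proba(u\ConnectedTo t\mid o\ConnectedTo u,t\ConnectedTo w)$ is a \emph{product}, roughly $\bigl[(k+1)\TheXi-k\bigr]\cdot\TheXi^{(k\lor s)-k}$, not ``$(k+1)\TheXi-k$ divided by $\TheXi^{(k\lor s)-k}$'' as you wrote --- your own phrase ``the earlier steps contributing a factor $\TheXi$ each'' already says as much, and the quotient would exceed $1$ whenever $s>k$ and $\TheXi$ is near $1$, which a probability cannot. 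Taking the reciprocal of the correct product gives $\frac{\TheXi^{k-(k\lor s)}}{(k+1)\TheXi-k}$, matching \eqref{eq:ksIndependentQuasiIndependentKernelMajoration}; as written your ``divided by'' would propagate to the wrong sign on the exponent $k-(k\lor s)$. (There is also a harmless off-by-one in ``$(k\lor s)+1-k$ steps'': there are $(k\lor s)$ vertices strictly between $u$ and $t$, of which $k$ are within distance $k$ of $t$ and $(k\lor s)-k$ are not.)
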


\begin{Rem}(by Temmel)
It is an artefact of our use of \eqref{eq:ksIndependentKernelBound} that we can not show \eqref{eq:ksIndependentQuasiIndependentKernelMajoration} to hold for $p=\PShearerKZ$, where $\TheXi=\frac{k}{k+1}$, and the rhs of \eqref{eq:ksIndependentQuasiIndependentKernelMajoration} explodes. I believe this artefact to be genuine and conjecture that quasi-independence does not hold for $\PercolationCanonicalK{\PShearerKZ}$. This is related to a the relation of \NameShearersMeasure{} with hard-core lattice gases and non-physical singularites of the partition function \cite[section 8]{ScottSokal_repulsive}.
\end{Rem}

\begin{proof}
Let $p>\PShearerKZ$. We use the notation from figure \ref{fig:percolationKernel}.
Then the minimality of \NameShearersMeasure{} \eqref{eq:shearerMinimalityOVOEP}, the explicit minoration on $\FuzzKZ$ in \eqref{eq:shearerKFuzzZOVOEPMinoration} and the fact that $\NodeLevel{t}\le\NodeLevel{u}+(k\lor s)+1$ imply that
\begin{align*}
	&\FirstAlign\Proba(o\ConnectedTo t|t\ConnectedTo w)\\
	&= \Proba(u\ConnectedTo t|o\ConnectedTo u,t\ConnectedTo w)
	   \Proba(o\ConnectedTo u|t\ConnectedTo w)\\
	&= \Proba(u\ConnectedTo t|o\ConnectedTo u,t\ConnectedTo w)
	   \Proba(o\ConnectedTo u)\\
	&\ge\ShearerMeasure{\FuzzKZ}{p}(
			Y_{\Set{\NodeLevel{u}+1,\dotsc,\NodeLevel{t}-1}}=\vec{1}
			|Y_{\Set{0,\dotsc,\NodeLevel{u}}}=\vec{1}
			,Y_{\Set{\NodeLevel{t},\dotsc,\NodeLevel{w}}}=\vec{1}
		)\Proba(o\ConnectedTo u)\\
	&\ge \left[\prod_{i=1}^k \ExplicitFK(i)\right] \ExplicitFK(0)^{(k\lor s)-k}
		\Proba(o\ConnectedTo u)\\
	&= \left[(k+1)\TheXi-k\right]
	  \TheXi^{(k\lor s)-k}\Proba(o\ConnectedTo u)\,.
\end{align*}
Together with the bound on $k,s$-independent percolation kernels \eqref{eq:ksIndependentKernelBound} on $\PercolationKernel(v,w)$ this yields \eqref{eq:ksIndependentQuasiIndependentKernelMajoration} and quasi-independence.
\end{proof}

\begin{Prop}\label{prop:minimalModelIsQuasiIndependent}
The minimal percolation model $\PercolationMinimalK{p}$ is quasi-independent.
\end{Prop}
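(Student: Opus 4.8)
The plan is to reuse the percolation-kernel bound already worked out inside the proof of proposition~\ref{prop:minimalModelFacts} and to pair it with a matching lower bound on $\Proba(o\ConnectedTo u)$ read off from the explicit form of the minimal model, so that the quasi-independence majoration \eqref{eq:percolationKernelQuasiIndependentMajoration} follows with constant $M=1$. Fix $v,w\in V$ (and $p>0$, so that the kernel is defined), put $u:=v\Confluent w$ and $\hat p:=p^{1/(k+1)}$, and use the notation of figure~\ref{fig:percolationKernel}. Since $\PercolationMinimalK{p}\in\PercolationClassRooted{p}{o}{k}{0}{V}$ the elsewhere-dependence is $s=0$, whence $k\lor s=k$ and $\NodeLevel{t}\le\NodeLevel{u}+k+1$ in both the generic and the degenerate case of figure~\ref{fig:percolationKernel}. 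The proof of proposition~\ref{prop:minimalModelFacts} then already establishes, via the bound \eqref{eq:ksIndependentKernelBound} on $k,s$-independent kernels together with $\Proba(o\ConnectedTo t|t\ConnectedTo w)\ge\hat p^{\NodeLevel{t}}$, that
\begin{equation*}
	\PercolationKernel(v,w)\le\hat p^{-k-1}\hat p^{-\NodeLevel{u}}=\hat p^{-(\NodeLevel{u}+k+1)}\,.
\end{equation*}

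Next I would supply the matching denominator. On a tree, $\Proba(o\ConnectedTo u)\ge\Proba(Z_{\TreePath{o}{u}}=\vec{1})$, and by the tree-fission property \eqref{eq:treeFissionPathMarginal} the subfield $Z_{\TreePath{o}{u}}$ is distributed like $(\mathcal{Z}_0,\dotsc,\mathcal{Z}_{\NodeLevel{u}})$. Reading off model~\ref{mod:minimal}, $\mathcal{Z}_j=\prod_{i=0}^{k}X_{j+i}$ for an \Iid{} \NameBernoulli{} field $X$ with parameter $\hat p$, so $\mathcal{Z}_0=\dotsb=\mathcal{Z}_{\NodeLevel{u}}=1$ is equivalent to $X_0=\dotsb=X_{\NodeLevel{u}+k}=1$, an event of probability $\hat p^{\NodeLevel{u}+k+1}$. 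Hence $\Proba(o\ConnectedTo u)\ge\hat p^{\NodeLevel{u}+k+1}$.

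Combining the two bounds yields $\PercolationKernel(v,w)\le\hat p^{-(\NodeLevel{u}+k+1)}\le 1/\Proba(o\ConnectedTo u)$ for all $v,w\in V$, which is exactly the majoration \eqref{eq:percolationKernelQuasiIndependentMajoration}, equivalently the definition \eqref{eq:quasiIndependent}, with $M=1$; therefore $\PercolationMinimalK{p}$ is quasi-independent. (A short computation using the product structure of model~\ref{mod:minimal} even gives the equality $\PercolationKernel(v,w)=1/\Proba(o\ConnectedTo u)$, but the inequality is all that is needed.) I do not expect a genuine obstacle: the only point that needs care is that the exponent $\NodeLevel{u}+k+1$ matches on both sides — the summand $k+1$ in the kernel bound comes from $\NodeLevel{t}\le\NodeLevel{u}+k+1$, while the same $k+1$ in the denominator comes from the $k$ extra $X$-coordinates on which $\mathcal{Z}_0,\dotsc,\mathcal{Z}_{\NodeLevel{u}}$ depend — together with the bookkeeping check that the degenerate case $t=w$ and the cases in which $v,w$ are comparable are already subsumed by \eqref{eq:ksIndependentKernelBound}.
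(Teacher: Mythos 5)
Your argument matches the paper's own proof: both rely on \eqref{eq:ksIndependentKernelBound}, on $\NodeLevel{t}\le\NodeLevel{u}+k+1$, and on the explicit evaluation of $\Proba(o\ConnectedTo u)$ for the minimal model. One small sign to fix: writing $\Proba(o\ConnectedTo u)\ge\hat p^{\NodeLevel{u}+k+1}$ gives $1/\Proba(o\ConnectedTo u)\le\hat p^{-(\NodeLevel{u}+k+1)}$, so your closing chain inequality points the wrong way; on a tree the bound is in fact an equality, $\Proba(o\ConnectedTo u)=\Proba(Z_{\TreePath{o}{u}}=\vec{1})=\hat p^{\NodeLevel{u}+k+1}$ (as you note parenthetically), and replacing $\ge$ by $=$ repairs the chain to $\PercolationKernel(v,w)\le\hat p^{-(\NodeLevel{u}+k+1)}=1/\Proba(o\ConnectedTo u)$.
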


\begin{proof}
We use the notation from figure \ref{fig:percolationKernel}. The explicit construction in model \ref{mod:minimal} with $\hat{p}=p^{1/(k+1)}$ and the fact that $\NodeLevel{t}\le\NodeLevel{u}+k+1$ imply that
\begin{equation*}
	\Proba(o\ConnectedTo t|t\ConnectedTo w)
	=\hat{p}^{\NodeLevel{t}}
	\ge\hat{p}^{\NodeLevel{u}+k+1}
	=\Proba(o\ConnectedTo u)\,.
\end{equation*}
Together with the bound on $k,s$-independent percolation kernels \eqref{eq:ksIndependentKernelBound} we get quasi-independence
\begin{equation*}
	\PercolationKernel(v,w)
	\le\frac{1}{\Proba(o\ConnectedTo t|t\ConnectedTo w)}
	\le\frac{1}{\Proba(o\ConnectedTo u)}\,.
\end{equation*}
\end{proof}

\subsection{A comment on stochastic domination}
\label{sec:commentOnStochasticDomination}

Recall that a percolation $X$ \emph{stochastically dominates} a percolation $Y$ iff there is a coupling of $X$ and $Y$ such that $\Proba(X\ge Y)=1$. Here the natural order is the partial component-wise order on $\Set{0,1}^E$. We show that for $k\ge 1$ our bounds do not imply stochastic domination of an independent percolation by all $k$-independent percolations for high enough $p$.

\begin{Prop}\label{prop:noBoundForStochasticDomination}
$\ForAll k\ge 1, p\in[0,1[, b\in[1,\infty[: \Exists \hat{p}\in[p,1[$ and $\Tree$ with $\BranchingNumberT=b$ and a $k$-independent site percolation $Z$ on $\Tree$ with parameter $\hat{p}$ such that $Z$ stochastically dominates only the trivial \NameBernoulli{} product field.
\end{Prop}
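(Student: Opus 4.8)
The plan is to construct, for given $k\ge 1$, target lower bound $p$, and branching number $b$, a tree $\Tree$ with $\BranchingNumberT = b$ together with a $k$-independent site percolation $Z$ whose open set almost surely fails to contain any nontrivial deterministic product configuration — in fact whose open clusters are a.s.\ bounded, so that the only Bernoulli product field it dominates is the one with parameter $0$. The natural candidate is the cutup model $\PercolationCutNK$ from model \ref{mod:cutup}. By proposition \ref{prop:cutupModelFacts} it lies in $\PercolationClassRooted{\PShearerKL{N}}{o}{k}{0}{V}$ and has all percolation cluster diameters a.s.\ bounded by $4N-4$. Crucially, by \eqref{eq:pShearerKFuzzZ} we have $\PShearerKL{N}\uparrow\PShearerKZ = 1-\PowerFracDualK$ as $N\to\infty$, so choosing $N$ large we obtain a parameter $\hat p := \PShearerKL{N}$ with $\hat p \in [p,1[$ whenever $p < \PShearerKZ$; if $p \ge \PShearerKZ$ we instead need a different gadget, described below.

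First I would handle the regime $p < \PShearerKZ$. Pick $\Tree$ to be any locally finite infinite tree with $\BranchingNumberT = b$ (e.g.\ a spherically symmetric tree realizing the desired growth; such trees exist for every $b\in[1,\infty[$). Pick $N$ with $\PShearerKL{N} \in [p, \PShearerKZ[$, set $\hat p := \PShearerKL{N}$, and let $Z := \PercolationCutNK$ on $\Tree$. Then $Z$ is $k$-independent with parameter $\hat p$. Now suppose $Z$ stochastically dominates a Bernoulli product field $W$ with parameter $r > 0$: then there is a coupling with $Z \ge W$ a.s.; but under $W$ the open set a.s.\ contains arbitrarily long open paths from $o$ (indeed an infinite open cluster with positive probability if $r$ is large, but more simply $W$ a.s.\ contains open paths of every finite length since the product field restricted to a single ray percolates through $n$ steps with probability $r^n > 0$, and there are infinitely many independent rays so some path of length $> 4N-4$ is open a.s.). This would force $Z$ to have an open path longer than $4N-4$ with positive probability, contradicting the a.s.\ diameter bound. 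Hence $r = 0$, i.e.\ $Z$ dominates only the trivial product field.

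For the remaining regime $p \ge \PShearerKZ$ (which can only occur when $k\ge 1$, since $\PShearerKZ>0$ there), the cutup model does not directly give a parameter in $[p,1[$, so I would instead take a finite $N'$-cutup-type block construction with parameter pushed above $p$: concretely, use model \ref{mod:cutup}'s tree-fission machinery applied to independent copies of $\ShearerMeasure{\FuzzKL{N'}}{q'}$ for a parameter $q' \in [p, 1[$ — by the remark after proposition \ref{prop:cutupModelFacts} one can generate such diameter-bounded models for every $q' < 1$, since for any $q'<1$ there is a finite line length $N'$ with $\PShearerKL{N'} \le q'$, and one runs $\ShearerMeasure{\FuzzKL{N'}}{q'}$ (which still forbids neighbouring closed sites and hence blocks long runs along each ray) independently on consecutive blocks. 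The resulting tree-fission is in $\PercolationClassRooted{q'}{o}{k}{0}{V}$, has cluster diameters a.s.\ bounded by $4N'-4$, and the same domination argument applies. Taking $\hat p := q'$ finishes this case.

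The main obstacle I anticipate is the precise argument that a diameter-bounded percolation cannot stochastically dominate a nontrivial product field — one must rule out the product field having parameter arbitrarily close to $0$. The clean way is: if $W$ has parameter $r > 0$, then on any fixed ray $o = v_0, v_1, \dots$ the events $\Set{W_{v_0} = \dots = W_{v_L} = 1}$ have probability $r^{L+1} > 0$ and, using infinitely many disjoint rays (available since $\Tree$ is infinite and locally finite, hence has infinitely many vertices at depth $L+1$ in at least one... or one passes to a subtree), the probability that \emph{some} open path of length $L := 4N-4+1$ starting at $o$ exists in $W$ is positive; domination then transfers this to $Z$, contradicting the a.s.\ diameter bound $4N-4$. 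A small care point is that $\Tree$ with $\BranchingNumberT = b$ and $b=1$ might be a single ray, which still has arbitrarily long paths, so the argument goes through; if $b>1$ there are genuinely many disjoint long paths, only strengthening the conclusion. I would also remark that $s=0$ here (the models are in $\PercolationClassRooted{\cdot}{o}{k}{0}{V}$), so these are bona fide $k$-independent site percolations as required by the statement, and unrooting them via lemma \ref{lem:percolationRootedVsUnrooted} does not affect the diameter-boundedness.
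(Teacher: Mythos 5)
Your first case ($p < \PShearerKZ$) is fine: the cutup model $\PercolationCutNK$ with $\hat p := \PShearerKL{N} \in [p,\PShearerKZ[$ is diameter bounded, and a diameter-bounded percolation cannot stochastically dominate a Bernoulli($r$) product field with $r>0$, since the latter almost surely contains open paths of every finite length. Your second case ($p \ge \PShearerKZ$), however, has a fatal gap. Running independent copies of $\ShearerMeasure{\FuzzKL{N'}}{q'}$ on consecutive blocks with $q' > \PShearerKL{N'}$ does \emph{not} block long open runs. The property that forces a closed site in every block is $\ShearerCriticalFunction{\FuzzKL{N'}}(\PShearerKL{N'}) = 0$, which holds only at criticality; for $q' > \PShearerKL{N'}$ one has $\ShearerCriticalFunction{\FuzzKL{N'}}(q') > 0$, so each block is all-open with positive probability, and consecutive all-open blocks produce open paths of arbitrary length. (``Forbidding neighbouring closed sites'' is unrelated to ``forbidding long open runs''.) Thus the resulting tree fission is not diameter bounded, and in fact theorem \ref{thm:diameterBoundedThreshold} guarantees that for $p\ge\PShearerKZ$ \emph{no} percolation in $\PercolationClassRooted{p}{o}{k}{0}{V}$ is diameter bounded, so any argument resting on diameter boundedness cannot possibly be repaired in this regime. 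You also misread the remark after proposition \ref{prop:cutupModelFacts}, which covers only $p < \PShearerKZ$.

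The paper escapes this by abandoning lines in favour of finite trees. Since $\PShearer{\Tree_d} = 1-(d-1)^{d-1}/d^d \to 1$ as $d\to\infty$, one can find a finite tree $\hat\Tree$ (a subtree of a suitable $\Tree_d$) with $\hat p := \PShearer{\hat\Tree}$ anywhere in $[p,1[$. Subdividing each edge of $\hat\Tree$ into a longer path, extending to an infinite tree with branching number $b$, placing $\ShearerMeasure{\hat\Tree}{\hat p}$ on a well-separated finite set $S$ and filling the remaining sites with independent Bernoulli($\hat p$) variables yields a $k$-independent site percolation with parameter $\hat p$. The mechanism replacing diameter boundedness is local and finitary: by \eqref{eq:pShearerFinite}, $\ShearerMeasure{\hat\Tree}{\hat p}(Y_{\Vertices{\hat\Tree}}=\vec 1)=0$, so the finite set $S$ almost surely contains a closed site, and any dominated Bernoulli product field $W$ satisfies $\Proba(W_S=\vec 1)=0$, forcing its parameter to be zero. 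This argument works uniformly for all $p\in[0,1[$ and removes the case split entirely; you would repair your proof by substituting this tree-based construction in case two, or more simply by using it for both cases.
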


\begin{Rem}
It is possible to extend proposition \ref{prop:noBoundForStochasticDomination} to all $(\hat{p},b)\in [0,1[\times[1,\infty[$, using \cite[proof of theorem 1]{Shearer_problem}.
\end{Rem}

\begin{proof}
Denote the $d$-regular tree by $\Tree_d$. We know that $\PShearer{\Tree_d}=1-\frac{(d-1)^{(d-1)}}{d^d}$ \cite[theorem 2]{Shearer_problem}. Choose $d$ such that $\PShearer{\Tree_d}>p$. By the definition of $\PShearer{\Tree_d}$ \eqref{eq:pShearerInfinite} there is a finite subtree $\hat{\Tree}$ of $\Tree$ with
\begin{equation*}
	p<\hat{p}:=\PShearer{\hat{\Tree}}<\PShearer{\Tree}\,.
\end{equation*}
Root $\hat{\Tree}$ at some vertex $\hat{o}$. Replace every edge of $\hat{\Tree}$ by a length $(k+1)$ path. Add an extra path of $(k+1)$ edges at $\hat{o}$ with endpoint $\bar{o}$. Extend this finite tree further to some arbitrary infinite tree $\bar{\Tree}$ with branching number $b$ and root it at $\bar{o}$.\\

For every length $(k+1)$ path in the previous paragraph take its last edge and denote their union by $S$. Place $\ShearerMeasure{\hat{\Tree}}{\hat{p}}$ on $S$ and fill up the other edges with \Iid{} Bernoulli($\hat{p}$) variables independently of $\ShearerMeasure{\Tree'}{\hat{p}}$ on $S$. The resulting percolation is $k,0$-independent. By \eqref{eq:pShearerFinite} $\ShearerMeasure{\hat{\Tree}}{\hat{p}}$ fulfils $\ShearerMeasure{\hat{\Tree}}{\hat{p}}(Y_{\Vertices{\hat{\Tree}}}=\vec{1})=0$ and hence the subpercolation on $S$ dominates only the trivial \NameBernoulli{} product field.
\end{proof}
\section*{Acknowledgements}
The second author thanks Wolfgang \NameWoess{} for supporting his journeys and constructive comments on the presentation of this work as well as Yuval \NamePeres{} and Rick \NameDurrett{} for pointing him to \cite{LiggettSchonmannStacey_domination}. This work has been partly effectuated during a series of stays of the second author in Marseille, supported by grants A3-16.M-93/2009-1 and A3-16.M-93/2009-2 from the Bundesland Steiermark and by the Austrian Science Fund (FWF), project W1230-N13.
\bibliography{COMMON/references}
\section{Additional material}
In the proof of proposition \ref{prop:treeFission} the consistency and properties of the family $\Set{\nu_R}_{R\in\mathcal{S}}$ have not been shown.

\begin{proof} Let $R,T\in\mathcal{S}$. Then $\ForAll \vec{s}_{R\cup T}\in\Set{0,1}^{R\cup T}$:
\begin{multline*}
	\nu_{R\cup T}(Y_{R\cup T}=\vec{s}_{R\cup T})
	= \nu_{R\cap T}(Y_{R\cap T}=\vec{s}_{R\cap T})\\
		\times\left(
		\prod_{v\in R\setminus T}
		\Proba(\mathcal{Z}_{\NodeLevel{v}}=s_v|
		      \ForAll w\in A(v):\mathcal{Z}_{\NodeLevel{w}}=s_w)
		\right)
		\left(
		\prod_{v\in T\setminus R}
		\dotsi
		\right)\,.
\end{multline*}
Hence $\nu_S$ and $\nu_T$ coincide on their common support $\Set{0,1}^{R\cap T}$. This implies consistency of the family $\Set{\nu_R}_{R\in\mathcal{S}}$.\\

It remains to show that $\nu_R$ is a probability measure on $\Set{0,1}^R$ with properties \eqref{eq:treeFission}. We prove this by induction over the cardinality of $R$. The induction base for $R=\Set{o}$ is
\begin{equation*}
	\nu_{\Set{o}}(Y_o=0)+\nu_{\Set{o}}(Y_o=1)
	=\Proba(\mathcal{Z}_0=0)+\Proba(\mathcal{Z}_0=1)
	= 1\,.
\end{equation*}
The induction step reduces $R$ to $T:=R\setminus\Set{v}$ for some leaf $v$ of $\Graph{R}$. Hence
\begin{align*}
	&\FirstAlign \sum_{\vec{s}_R} \nu_R(Y_R=\vec{s}_R)\\
	&= \sum_{\vec{s}_T} \sum_{s_v\in\Set{0,1}}
			\nu_R(Y_v=s_v,Y_{T}=\vec{s}_T)\\
	&= \sum_{\vec{s}_T}
			\nu_R(Y_T=\vec{s}_T)
		\underbrace{\sum_{s_v\in\Set{0,1}}
			\Proba(\mathcal{Z}_{\NodeLevel{v}}=s_v|
			      \ForAll w\in A(v): \mathcal{Z}_{\NodeLevel{w}}=\vec{s}_w)
		}_{=1}\\
	&= \sum_{\vec{s}_T}
			\nu_T(Y_T=\vec{s}_T)\\
	&= 1\,.
\end{align*}
For independence suppose that $W\subseteq R\in S$ fulfills the condition of \eqref{eq:treeFissionSubtreeIndependence}. Let $U:=\bigcup_{w\in W} A(w)$ and for $w\in W$ let $V_w:=\Vertices{\SubtreeRootedAt{\Tree}{w}}\cap R$. Then \eqref{eq:treeFissionDefinition} entails that
\begin{equation*}
	\nu_R(\ForAll w\in W: Z_{V_w}=\vec{s}_{V_w}|Z_U=\vec{s}_U)
	=\prod_{w\in R} \nu_R(Z_{V_w}=\vec{s}_{V_w}|Z_U=\vec{s}_U)\,.
\end{equation*}
Conditional independence on $Z_U$ implies independence as in \eqref{eq:treeFissionSubtreeIndependence}.\\

We turn to the distribution along downpaths. For $v\in V$ we have $P:=\TreePath{o}{v}=:\Set{o=:w_0,\dotsc,w_{\NodeLevel{v}}:=v}\in S$. Hence $\ForAll \vec{s}_P\in\Set{0,1}^P$:
\begin{align*}
	&\FirstAlign\nu_P(Z_P=\vec{s}_P)\\
	&=\prod_{i=0}^{\NodeLevel{v}}
		\nu_P(Z_{w_i}=s_{w_i}|Z_{A(w_i)}=\vec{s}_{A(w_i)})\\
	&=\prod_{i=0}^{\NodeLevel{v}}
		\Proba(\mathcal{Z}_i=s_{w_i}|
		      \ForAll w\in A(w_i):\mathcal{Z}_{\NodeLevel{w}}=s_w)\\
	&=\Proba(\ForAll w\in P: \mathcal{Z}_{\NodeLevel{w}}=s_w)\,.
\end{align*}

Finally the invariance under automorphisms of the rooted tree is a result of the obliviousness of the construction to the ordering of the children.
\end{proof}

\end{document}